\newcommand{\MM}{\mathbb{M}}
\newcommand{\NN}{\mathbb{N}}
\newcommand{\PP}{\mathbb{P}}
\newcommand{\RR}{\mathbb{R}}
\newcommand{\bA}{\mathbf{A}}
\newcommand{\bn}{\mathbf{n}}
\newcommand{\bgamma}{\bm \upgamma}
\newcommand{\cE}{\mathcal{E}}
\newcommand{\cF}{\mathcal{F}}
\newcommand{\cT}{\mathcal{T}}
\newcommand{\cV}{\mathcal{V}}
\newcommand{\rv}{{\rm v}}
\newcommand{\<}{\langle}
\renewcommand{\>}{\rangle}
\DeclareMathSymbol{\epsilon}{\mathord}{letters}{"22}
\DeclareMathSymbol{\phi}{\mathord}{letters}{"27}
\DeclareMathOperator{\Lagrange}{L}
\DeclareMathOperator{\SZ}{SZ}
\DeclareMathOperator*{\argmin}{argmin}
\DeclareMathOperator{\ddiv}{div}
\DeclareMathOperator{\intt}{int}
\DeclareMathOperator{\sgn}{sgn}
\DeclareMathOperator{\spann}{span}
\DeclareMathOperator{\tr}{tr}
    \newcommand{\refcite}[1]{\cite{#1}}
    \newenvironment{itemlist}{\begin{itemize}}{\end{itemize}}
    \newenvironment{romanlist}[1][]{\begin{enumerate}[#1]}{\end{enumerate}}
    \theoremstyle{definition}
    \newtheorem{definition}{Definition}[section]
    \newtheorem{assumptions}{Assumptions}[section]
    \theoremstyle{plain}
    \newtheorem{corollary}{Corollary}[section]
    \newtheorem{lemma}{Lemma}[section]
    \newtheorem{proposition}{Proposition}[section]
    \newtheorem{theorem}{Theorem}[section]
    \theoremstyle{remark}
    \newtheorem{remark}{Remark}[section]
    \numberwithin{equation}{section}
    \newtheorem{assumptions}{Assumptions}[section]
\crefname{equation}{}{}
\crefname{section}{section}{sections}
\begin{document}
\iftoggle{author}{}{
    \markboth{Guillaume Bonnet, Andrea Cangiani and Ricardo Nochetto}{Conforming virtual element method for nondivergence form linear elliptic equations}

    \catchline{}{}{}{}{}
}

\title{Conforming virtual element method for nondivergence form linear elliptic equations with Cordes coefficients}

\iftoggle{author}{
    \author{
        Guillaume Bonnet\footnote{CEREMADE, CNRS, Université Paris-Dauphine, Université PSL, 75016 Paris, France}
        \and
        Andrea Cangiani\footnote{Mathematics Area, International School for Advanced Studies (SISSA), 34136 Trieste, Italy}
        \and
        Ricardo H. Nochetto\footnote{Department of Mathematics, University of Maryland, College Park, MD 20742}
    }
}{
    \author{Guillaume Bonnet}

    \address{
        CEREMADE, CNRS, Université Paris-Dauphine, Université PSL \\
        place du Maréchal de Lattre de Tassigny, 75016 Paris, France \\
        bonnet@ceremade.dauphine.fr
    }

    \author{Andrea Cangiani}

    \address{
        Mathematics Area, International School for Advanced Studies (SISSA) \\
        via Bonomea, 265, 34136 Trieste, Italy \\
        acangian@sissa.it
    }

    \author{Ricardo H. Nochetto}

    \address{
        Department of Mathematics, University of Maryland, College Park \\
        4176 Campus Drive~-- William E.~Kirwan Hall, College Park, MD 20742 \\
        rhn@umd.edu
    }
}

\maketitle

\iftoggle{author}{}{
    \begin{history}
        \received{(Day Month Year)}
        \revised{(Day Month Year)}
        \comby{(xxxxxxxxxx)}
    \end{history}
}

\begin{abstract}
    We propose and analyze an $H^2$-conforming Virtual Element Method (VEM) for the simplest linear elliptic PDEs in nondivergence form with Cordes coefficients. The VEM hinges on a hierarchical construction valid for any dimension $d\ge 2$. The analysis relies on the continuous Miranda-Talenti estimate for convex domains $\Omega$ and is rather elementary. We prove stability and error estimates in $H^2(\Omega)$, including the effect of quadrature, under minimal regularity of the data. Numerical experiments illustrate the interplay of coefficient regularity and convergence rates in $H^2(\Omega)$.
\end{abstract}

\iftoggle{author}{}{
    \keywords{virtual element method; nondivergence form; Cordes condition; discontinuous coefficients; $H^2$ conformity; error analysis.}

    \ccode{AMS Subject Classification: 65N30, 65N12, 65N15, 35J15, 35D35}
}


\section{Introduction}


In this paper, we study the discretization by an $H^2$-conforming Virtual Element Method (VEM) of the simplest elliptic equations in nondivergence form
\begin{equation}
    \label{eq:main}
    \begin{cases}
        A : \nabla^2 u = f &\text{in } \Omega, \\
        u = g &\text{on } \partial \Omega,
    \end{cases}
\end{equation}
where $\Omega \subset \RR^d$ is a bounded convex polytopal domain, $A \in [L^\infty(\Omega)]^{d \times d}$ is a field of uniformly elliptic symmetric positive definite matrices, $f \in L^2(\Omega)$, and $g \in H^2(\Omega)$. Here, $\nabla^2$ denotes  the Hessian and the colon stands for the Frobenius inner product between matrices.

The value function of a stochastic differential equation satisfies a linear PDE such as \cref{eq:main}, typically with low order terms\cite{cerrai2001,cherny2005}. They also appear in the linearization of fully nonlinear equations, such as Monge-Amp\`ere and Hamilton-Jacobi-Bellman, which are relevant to a number of applications, including differential geometry, optimal transport and stochastic control, fluid mechanics and meteorology, image processing\cite{feng2013,dephilippis2014,falcone2013}.

The structure of \cref{eq:main} is deceivingly simple. For example, \cref{eq:main} with forcing
$f=0$ and discontinuous coefficient $A$ given by
\[
    A(x) = I_{d \times d} + \frac{d + \alpha -2}{1-\alpha}
    \frac{x}{|x|} \otimes \frac{x}{|x|}
\]
admits two solutions in the unit ball $B_1(0)$ centered at $0$, namely
$u(x) = |x|^{\alpha} - 1$ and $u(x)= 0$, which
happen to be of class $H^2(\Omega)$ provided $d>2(2-\alpha)$ for any
$0<\alpha<1$. For $0\le \mu < 1$, the (pointwise) Cordes condition
\[
\frac{|\bA|^2}{(\tr \bA)^2} \le \frac{1}{d-\mu^2}
\]
gives sufficient conditions on $A$, possibly discontinuous, to guarantee a unique solution $u\in H^2(\Omega)$ of \cref{eq:main} for $\Omega$ convex and $f\in L^2(\Omega)$; this is a consequence of a perturbation theory for the Laplacian\cite{campanato1993,maugeri2000}.
We refer to Ref.~\refcite{nochetto2018} for a brief review on the different notions of solutions and corresponding numerical methods discussed in the literature.

Finite difference methods have been proposed for \cref{eq:main} upon discretizing the Hessian $\nabla^2 u$ directly and assuming continuity of $A$. These methods are known to require wide stencils to enforce monotonicity and consistency, the required size of those stencils depending on the ellipticity constant of $A$\cite{motzkin1953,kocan1995}. We refer to the semi-Lagrangian methods for linear and nonlinear elliptic problems by K.~Debrabant and E.~R.~Jakobsen\cite{debrabant2013} and by F.~Camilli and M.~Falcone\cite{camilli1995}, which are two-scale methods. We also refer to the methods by J.~F.~Bonnans, É.~Ottenwaelter, and H.~Zidani\cite{bonnans2004} for $d=2$ and J.~F.~Bonnans, G.~Bonnet, and J.-M.~Mirebeau\cite{bonnans2023monotone} for $d>2$, which feature more compact stencils, to the extent permitted by the ellipticity of $A$, at the cost of requiring a Cartesian structure of the discretization grid. On the other hand, a two-scale Galerkin method has been proposed by R.~H.~Nochetto and W.~Zhang\cite{nochetto2018}. All these methods are monotone and are known to converge in the max-norm.

An essential difficulty for Galerkin discretizations of \cref{eq:main} is the lack of a natural variational formulation associated to this problem.
If the matrix field $A$ is differentiable, we clearly have that the problem \cref{eq:main} is equivalent to the divergence form problem
\begin{equation*}
    \begin{cases}
        \ddiv (A \nabla u) - (\ddiv A) \cdot \nabla u = 0 &\text{in } \Omega, \\
        u = g &\text{on } \partial \Omega,
    \end{cases}
\end{equation*}
which leads to the usual variational formulation. However, this approach is \emph{not} applicable when either $A$ is not differentiable or $A$ is differentiable but rough, thereby giving rise to an advection-dominated diffusion  problem. Moreover, in typical applications arising from fully nonlinear PDEs, the coefficients cannot be expected to be continuous everywhere and the location of discontinuities cannot be assumed to be known a priori. Thus, the discretization of \cref{eq:main} without any further assumption on the regularity of the coefficients is of great importance.

An approach introduced in Ref.~\refcite{smears2013} for solving numerically the problem \cref{eq:main}, and that is applicable when the matrix field $A$ is rough and satisfies the Cordes condition, consists in discretizing the following variational formulation:  find $u \in V^g$ such that
\begin{equation}
    \label{eq:var_explicit}
    \int_\Omega (\gamma A : \nabla^2 u) \Delta v\, d x = \int_\Omega \gamma f \Delta v\, d x, \quad \forall v \in V^0,
\end{equation}
where
\begin{align*}
    V^g &:= \{u \in H^2(\Omega) \mid u|_{\partial \Omega} = g|_{\partial \Omega}\},
\end{align*}
and where $\gamma \in L^\infty(\Omega; \RR_+^*)$ is a suitable scaling function.
Since the range of the Laplace operator $\Delta:H^2(\Omega)\to L^2(\Omega)$ is the whole of $L^2(\Omega)$, because of the (crucial) convexity assumption on $\Omega$, and the boundary condition is just of Dirichlet type for $u$, the variational formulation \cref{eq:var_explicit} is equivalent to the \emph{strong form} second order equation \cref{eq:main}.


It can be shown that \cref{eq:var_explicit} is well-posed in $V$ in the setting of the Cordes conditions. The proof  hinges upon the Miranda-Talenti estimate; see Ref.~\refcite{smears2013} and \cref{thm:miranda_talenti} below.
Hence, the variational formulation \cref{eq:var_explicit} allows for the direct finite element discretization of strong solutions via $H^2$-conforming elements\cite{smears2013,gallistl2017}.
For instance,  the $C^1$-conforming Bogner-Fox-Schmit (BFS) is shown by D.~Gallistl in Ref.~\refcite{gallistl2017} to produce second order accurate solutions over rectangular partitions for $d=2$.
But,  in general, the need for $H^2$-conforming elements, which  are notoriously cumbersome to construct and implement, makes the straightforward discretization of \cref{eq:var_explicit} challenging or even impractical when $d\ge 3$.
%

Earlier,  I.~Smears and E.~Süli introduced in Ref.~\refcite{smears2013} the alternative idea of relaxing the conformity requirements by considering non-conforming discrete variational formulations including stabilization terms. Their formulation is designed so that stability can be shown  based on establishing a discrete analogue of the Miranda-Talenti estimate whose proof is highly technical.
%
Later approaches essentially follow Ref.~\refcite{smears2013}, proposing non-conforming stabilized discretizations relying on discrete Miranda-Talenti estimates; see e.g. Refs.~\refcite{dedner2022nonvar,gallistl2017,gallistl2022,kawecki2021,neilan2019,wu2021}.

In contrast, in this paper, we propose and study the discretization of \cref{eq:var_explicit} using an $H^2$-conforming  VEM. 
Originally, the VEM was proposed in Ref.~\refcite{beirao2013} as a generalization of the $H^1$-conforming FEM to polygonal/polyhedral meshes. However, it was soon realized that the flexibility of its design could be exploited to construct entirely new discrete spaces satisfying specific properties, such as a given type of conformity. 
An $H^2$-conforming  VEM was already proposed by F.~Brezzi and D.~Marini in Ref.~\refcite{brezzi2013} for the solution of plate bending problems and has since been generalized by several authors including for the solution of polyharmonic problems\cite{beirao2014,brenner2019,antonietti2016,antonietti2020,beirao2020,antonietti2021,dedner2022fourthorder,chen2022hessian,antonietti2022}. Most recently, based on a hierarchical construction in the space dimension, a complete family of virtual element spaces  satisfying any degree of conformity  in arbitrary dimensions has been presented by C.~Chen, X.~Huang, and H.~Wei in Ref.~\refcite{chen2022conforming}.

In terms of both their design and implementation,  VEMs with higher degree of conformity are conceptually similar to the basic $H^1$-conforming VEM. This stands in stark contrast with their finite elements counterparts, which are characterized by ad hoc constructions leading to spaces of relatively large dimensions that are \emph{not} affine equivalent and, hence, cumbersome to implement resorting on a reference element as standard\cite{ciarlet2002,kirby2019}. For this reason, VEMs, in which elements are also constructed on the physical domain directly, without the use of reference-to-physical mappings, are competitive in terms of implementation difficulty and cost against finite elements with high degrees of conformity, while also providing some benefits such as a low number of degrees of freedom and a consistent construction of the relevant virtual element spaces regardless of the dimension and the required polynomial consistency order.

Virtual elements are of generalized finite element type in that they include both polynomial \emph{and} non-polynomial functions. For instance, the $H^2$-conforming virtual element space of (optimal) order $m-1$ in the $H^2$-seminorm  considered herein contains the space $\PP_m$ of polynomials of degree up to $m$ but no polynomial of higher degree. The space is completed by non-polynomial functions in view of the conformity requirements. The dimension of the resulting element is inherently consistent with the required level of  conformity and approximation power\cite{chen2022conforming}.

As far as we are aware, this is the first application of the VEM to elliptic problems in nondivergence form. Our approach yields a rather compact sparsity pattern, compared with two-scale methods, but at the expense of monotonicity. Therefore, our notions of stability and convergence are not expressed in $L^\infty$-norm but rather in the $H^2$-norm.

We show that the $H^2$-conforming VEM discretization of  the prototype elliptic equation in nondivergence form~\cref{eq:var_explicit} is well-posed by directly exploiting the continuous Miranda-Talenti estimate. Furthermore, we prove stability and optimal rates of convergence in the $H^2$-norm, including the effect of quadrature. The method and analysis apply to both standard and polygonal/polyhedral meshes, under a common shape-regularity assumption, namely uniformly star-shapedness of both the elements and their interfaces. This assumption reduces to classical shape-regularity in the case of standard meshes made of simplices.

Crucially, the analysis holds under no additional regularity on the data $(A,f,g)$, hence in exactly the same basic setting for existence of \cref{eq:main}. This is in contrast to error estimates for VEMs applied to elliptic problems in divergence form, which typically involve regularity of the coefficients, see e.g. Ref.~\refcite{cangiani2017}, as is also customary for FEMs\cite{ciarlet2002}. The reason is that we are able to exploit the strong form \cref{eq:main} in the analysis; see Ref.~\refcite{kawecki2021} for a similar argument in the setting of discontinuous Galerkin methods.
In view of showing optimal convergence under minimal regularity, we also construct and analyze a
Scott-Zhang\cite{scott1990} type interpolation operator for the $H^2$-conforming VEM spaces of Ref.~\refcite{chen2022conforming}.
Furthermore, the analysis is extended to include the effect of quadrature under some minimal continuity assumptions  required for point-wise evaluation of the data.
These results are verified in practice with a set of numerical examples carefully chosen to examine the effect of the regularity of coefficients and solution on the convergence rate. For instance, they confirm that optimal convergence rates are preserved when the coefficients are rough but the solution is smooth.

$H^2$-conforming VEMs are conceptually simple and greatly simplify the analysis upon circumventing the need of a discrete Miranda-Talenti estimate. In addition, they may have other potential advantages. For instance, the fact that (high order) approximations of the gradient are readily available from the numerical solutions is of particular interest in optimal control applications.  In contrast, state-of-the-art approaches based on finite difference methods typically provide low order gradient approximations\cite{bonnans2004,bonnans2023monotone,camilli1995,debrabant2013}. We further highlight that the degrees of freedom of the lowest order $H^2$-conforming VEM, namely the value and gradient at the mesh vertices, are especially simple. This is true in all dimensions, making the extension of the method to moderately  higher-dimensional problems plausible. Finally, the fact that we deal directly with the strong form \cref{eq:main} entails immediate access to residual error estimates, as in Ref.~\refcite{gallistl2017}. Such estimates may be used to drive automatic mesh adaptivity, thereby exploiting the mesh flexibility associated with VEMs.

The rest of this paper is organized as follows. In \cref{sec:cont}, we discuss the Cordes condition and show unique solvability of \cref{eq:main}. In \cref{sec:discr}, we introduce an $H^2$-conforming VEM framework in arbitrary dimensions for which we show existence of a unique discrete solution, and prove a quasi-optimal error estimate in the $H^2(\Omega)$-norm. In \cref{sec:quadrature}, we prove a variant of the error estimate taking into account the effect of numerical quadrature on the scheme. In \cref{sec:h2vem} we recall the construction of  $H^2$-conforming VEMs satisfying the framework of our analysis. For simplicity, we limit ourselves to the case $d=2,3$ and analyze the corresponding quasi-interpolation operator. We use the latter to derive optimal convergence rates in \cref{sec:h2vem_rates}. We finally conclude in \cref{sec:numerics} with a set of numerical experiments exploring the interplay of regularity of solution and data with the accuracy of the VEM.

\section{The continuous problem}
\label{sec:cont}

In this section, we review some elements of the analysis of $H^2$ solutions to equation \cref{eq:main}. We start by defining the Cordes condition in \cref{subsec:cordes}, and then in \cref{subsec:var} we discuss the variational formulation \cref{eq:var_explicit} and its well-posedness. The proof of the well-posedness of \cref{eq:var_explicit} under the Cordes condition is standard\cite{maugeri2000,smears2013}; we choose to reproduce it for completeness, and in order to make it easy to compare with the analysis of our discrete variational formulation in \cref{subsec:scheme_well_posedness}.

\subsection{The Cordes condition}
\label{subsec:cordes}

We equip $\RR^{d \times d}$ with the Frobenius norm, that we denote by $|\cdot|$.

\begin{definition}[Cordes condition]
    \label{def:cordes}
    Let $0 \leq \mu < 1$. A symmetric positive definite matrix $\bA \in \RR^{d \times d}$ is said to satisfy the \emph{$\mu$-Cordes condition} if there exists $0 \leq \mu < 1$ such that
    \begin{equation*}
        \frac{|\bA|^2}{(\tr \bA)^2} \leq \frac{1}{d - \mu^2}.
    \end{equation*}
    The matrix field $A \in [L^\infty(\Omega)]^{d \times d}$ is said to satisfy the \emph{$\mu$-Cordes condition} if $A(x)$ satisfies the $\mu$-Cordes condition for almost every $x \in \Omega$.
\end{definition}

\begin{remark}
    The Cordes condition is often written as $|\bA|^2 / (\tr \bA)^2 \leq 1 / (d - 1 + \epsilon)$ for some $0 < \epsilon \leq 1$. This is equivalent to the definition above, with $\mu = \sqrt{1 - \epsilon}$.
\end{remark}

\begin{proposition}[characterizations of the Cordes condition]
    \label{prop:cordes_charac}
    Let $0 \leq \mu < 1$, and let $\bA \in \RR^{d \times d}$ be symmetric positive definite. The following are equivalent:
    \begin{romanlist}[(i)]
        \item $\bA$ satisfies the $\mu$-Cordes condition.
        \item There exists $\bgamma > 0$ such that $|\bgamma \bA - I_d| \leq \mu$.
        \item One has $|\bgamma \bA - I_d| \leq \mu$ with $\bgamma = \tr \bA / |\bA|^2$.
    \end{romanlist}
\end{proposition}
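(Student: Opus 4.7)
The plan is to reduce all three statements to a single scalar inequality by computing $|\bgamma \bA - I_d|^2$ explicitly. Using the Frobenius inner product identities $\<\bA, I_d\> = \tr \bA$ and $|I_d|^2 = d$, this quantity expands into a quadratic function of the scalar $\bgamma$, namely
\begin{equation*}
    |\bgamma \bA - I_d|^2 = \bgamma^2 |\bA|^2 - 2 \bgamma \tr \bA + d.
\end{equation*}

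The next step is to minimize this quadratic over $\bgamma \in \RR$. Completing the square yields the unique minimizer $\bgamma^* = \tr \bA / |\bA|^2$ and the minimum value $d - (\tr \bA)^2 / |\bA|^2$. Since $\bA$ is symmetric positive definite we have $\tr \bA > 0$, hence $\bgamma^* > 0$; this reconciles the positivity constraint in (ii)--(iii) with the unconstrained minimization.

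With this in hand, I would chase the three implications. The implication (iii) $\Rightarrow$ (ii) is immediate by choosing $\bgamma = \bgamma^*$. For (ii) $\Rightarrow$ (i): if any positive $\bgamma$ achieves $|\bgamma \bA - I_d| \leq \mu$, then in particular the minimum does, giving $d - (\tr \bA)^2/|\bA|^2 \leq \mu^2$, which rearranges (using $d - \mu^2 > 0$) to the Cordes inequality (i). For (i) $\Rightarrow$ (iii): the same algebra run backwards shows that the minimum value of the quadratic is at most $\mu^2$, and this minimum is attained at $\bgamma^*$.

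There is no real obstacle: the argument is a one-variable calculus exercise combined with two elementary Frobenius identities. The only point that requires a moment's care is verifying $\bgamma^* > 0$, so that the unconstrained optimum lies within the admissible range of~(ii), and noting that $d - \mu^2 > 0$ for $0 \le \mu < 1$ and $d \ge 2$ to justify dividing through without flipping the inequality.
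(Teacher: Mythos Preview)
Your proof is correct and follows essentially the same route as the paper: both expand $|\bgamma \bA - I_d|^2$ into the quadratic $\bgamma^2|\bA|^2 - 2\bgamma \tr\bA + d$, identify its minimizer $\bgamma^* = \tr\bA/|\bA|^2$ with minimum value $d - (\tr\bA)^2/|\bA|^2$, and read off the equivalences from there. Your explicit checks that $\bgamma^* > 0$ and $d - \mu^2 > 0$ are good hygiene that the paper leaves implicit.
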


\begin{proof}
    For any $\bgamma \in \RR$, one has
    \begin{equation}
        \label{eq:cordes_charac_development}
        |\bgamma \bA - I_d|^2 = \bgamma^2 |\bA|^2 - 2 \bgamma \tr \bA + d.
    \end{equation}
    In particular, if $\bgamma = \tr \bA / |\bA|^2$, then $|\bgamma \bA - I_d|^2 = d - (\tr \bA)^2 / |\bA|^2$, from which the equivalence between (i) and (iii) follows. It is obvious that (iii) implies (ii), and to prove that (ii) implies (iii) it suffices to prove that
    \begin{equation*}
        \tr \bA / |\bA|^2 = \argmin_{\bgamma > 0} |\bgamma \bA - I_d|^2,
    \end{equation*}
    which follows from using \cref{eq:cordes_charac_development} and then writing the first-order optimality condition for the minimization problem.
\end{proof}

The above characterization of the Cordes condition motivates the following definition:

\begin{definition}[admissible scaling]
    \label{def:admissible_scaling}
    Assume that the matrix field $A \in [L^\infty(\Omega)]^{d \times d}$ satisfies the $\mu$-Cordes condition for some $0 \leq \mu < 1$. The function $\gamma \in L^\infty(\Omega; \RR_+^*)$ is said to be a \emph{$\mu$-admissible scaling} of $A$ if $|\gamma(x) A(x) - I_d| \leq \mu$ for almost every $x \in \Omega$.
\end{definition}

By \cref{prop:cordes_charac}, if $A$ satisfies the $\mu$-Cordes condition for some $0 \leq \mu < 1$, then there exists a $\mu$-admissible scaling of $A$, and a possible choice is given by the formula $\gamma(x) = \tr A(x) / |A(x)|^2$ (note that in this case the fact that $\gamma \in L^\infty(\Omega; \RR_+^*)$ follows from the assumption that $A$ is uniformly elliptic and bounded).

The following proposition will be useful in order to simplify our estimates in the following sections.

\begin{proposition}
    \label{prop:cordes_scaled_bound}
    Assume that $A$ satisfies the $\mu$-Cordes condition for some $0 \leq \mu < 1$ and that $\gamma$ is a $\mu$-admissible scaling of $A$. Then, for any measurable set $K \subset \Omega$,
    \begin{equation*}
        \|\gamma A\|_{0, \infty, K} < 1+\sqrt{d}.
    \end{equation*}
\end{proposition}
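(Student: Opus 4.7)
The plan is to observe that this is essentially a one-line triangle inequality argument in the Frobenius norm, applied pointwise and then passed to the essential supremum.

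First I would note that the Frobenius norm of the identity matrix is $|I_d| = \sqrt{d}$. Then, for almost every $x \in \Omega$, the triangle inequality in the Frobenius norm gives
\begin{equation*}
    |\gamma(x) A(x)| \leq |\gamma(x) A(x) - I_d| + |I_d| \leq \mu + \sqrt{d},
\end{equation*}
where the second inequality uses \cref{def:admissible_scaling} for the first term.

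Since $\mu < 1$ strictly by assumption, the right-hand side is strictly less than $1 + \sqrt{d}$. Taking the essential supremum over any measurable subset $K \subset \Omega$ preserves the strict inequality (because the bound $\mu + \sqrt{d}$ is a pointwise constant and independent of $x$ and $K$), yielding $\|\gamma A\|_{0,\infty,K} \leq \mu + \sqrt{d} < 1 + \sqrt{d}$.

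There is essentially no obstacle here; the only subtlety worth flagging is that the estimate one actually obtains is the sharper $\mu + \sqrt{d}$, and the stated bound $1 + \sqrt{d}$ is simply a uniform-in-$\mu$ relaxation presumably chosen for convenience in subsequent estimates.
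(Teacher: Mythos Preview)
Your proof is correct and matches the paper's own argument essentially line for line: both add and subtract $I_d$, apply the triangle inequality in the Frobenius norm, use the $\mu$-admissible scaling bound and $|I_d|=\sqrt{d}$, and then relax $\mu+\sqrt{d}$ to $1+\sqrt{d}$ via $\mu<1$.
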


\begin{proof}
    For almost every $x \in K$,
    \begin{equation*}
        |\gamma(x) A(x)|
        = |\gamma(x) A(x) - I_d + I_d|
        \leq |\gamma(x) A(x) - I_d| + |I_d|
        \leq \mu + \sqrt{d} < 1+\sqrt{d},
    \end{equation*}
    which concludes the proof.
\end{proof}


\subsection{Variational formulation and well-posedness}
\label{subsec:var}

Let us define the bilinear form $a \colon H^2(\Omega) \times H^2(\Omega) \to \RR$ and the linear form $L \colon H^2(\Omega) \to \RR$ by
\begin{align}
    \label{eq:bilin_lin}
    a(u, v) &:= \int_\Omega (\gamma A : \nabla^2 u) \Delta v\, d x, &
    L(v) &:= \int_\Omega \gamma f \Delta v\, d x,
\end{align}
so that the variational problem \cref{eq:var_explicit} can be written as: find $u \in V^g$ such that
\begin{equation}
    \label{eq:var}
    a(u, v) = L(v), \quad \forall v \in V^0.
\end{equation}
We emphasize that the variational formulation \cref{eq:var} is consistent with the problem \cref{eq:main}.

\begin{proposition}[consistency]
    \label{prop:consistency}
    For any $u \in V^g$, the following are equivalent:
    \begin{romanlist}[(i)]
        \item $A : \nabla^2 u = f$ almost everywhere in $\Omega$.
        \item $\gamma A : \nabla^2 u = \gamma f$ almost everywhere in $\Omega$.
        \item $u$ is solution to \cref{eq:var}.
    \end{romanlist}
\end{proposition}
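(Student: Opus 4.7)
The plan is to prove the chain $(\mathrm{i}) \Leftrightarrow (\mathrm{ii}) \Leftrightarrow (\mathrm{iii})$, with the first equivalence being trivial and the interesting content concentrated in $(\mathrm{iii}) \Rightarrow (\mathrm{ii})$, which requires invoking the surjectivity of the Laplacian on $V^0$ that is flagged in the introduction as a consequence of convexity of $\Omega$ and the Miranda-Talenti estimate.

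First, for $(\mathrm{i}) \Leftrightarrow (\mathrm{ii})$, I would observe that since $\gamma \in L^\infty(\Omega; \RR_+^*)$, $\gamma$ is strictly positive almost everywhere, so multiplication or division by $\gamma$ preserves equalities that hold almost everywhere. Then for $(\mathrm{ii}) \Rightarrow (\mathrm{iii})$, the argument is immediate: if $\gamma A : \nabla^2 u - \gamma f = 0$ a.e.\ in $\Omega$, then for any $v \in V^0$ the integrand $(\gamma A : \nabla^2 u - \gamma f)\Delta v$ vanishes pointwise a.e., and integration yields $a(u,v) = L(v)$.

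The main step is $(\mathrm{iii}) \Rightarrow (\mathrm{ii})$. Define $w := \gamma A : \nabla^2 u - \gamma f$; since $A \in [L^\infty(\Omega)]^{d \times d}$, $\gamma \in L^\infty(\Omega; \RR_+^*)$, $\nabla^2 u \in [L^2(\Omega)]^{d \times d}$ and $f \in L^2(\Omega)$, we have $w \in L^2(\Omega)$, and the hypothesis reads
\begin{equation*}
    \int_\Omega w\,\Delta v \, dx = 0 \qquad \forall v \in V^0.
\end{equation*}
The idea is to test against a $v$ for which $\Delta v = w$. Since $\Omega$ is a bounded convex polytopal domain, standard elliptic regularity (equivalently, the Miranda-Talenti estimate referenced as \texttt{thm:miranda\_talenti}) gives that the map $\Delta \colon V^0 \to L^2(\Omega)$ is surjective: given any right-hand side in $L^2(\Omega)$, the Dirichlet problem for the Laplacian admits a solution in $H^2(\Omega) \cap H^1_0(\Omega) = V^0$. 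Applying this with $w$ as data yields $v \in V^0$ with $\Delta v = w$, so plugging into the identity above gives $\|w\|_{0,\Omega}^2 = 0$, hence $w = 0$ a.e., which is $(\mathrm{ii})$.

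The only subtle point is the appeal to surjectivity of $\Delta$ on $V^0$, which crucially uses convexity of $\Omega$; on a general Lipschitz domain this would fail and the equivalence of $(\mathrm{iii})$ with the strong form would break. Everything else is essentially bookkeeping, and I do not anticipate any technical obstacle beyond a clean citation (or a forward reference to the Miranda-Talenti statement in \cref{subsec:var}) for that surjectivity result.
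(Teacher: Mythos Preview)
Your proof is correct and follows essentially the same approach as the paper: the equivalence $(\mathrm{i})\Leftrightarrow(\mathrm{ii})$ via positivity of $\gamma$, and the equivalence $(\mathrm{ii})\Leftrightarrow(\mathrm{iii})$ via the fact that $\Delta\colon V^0\to L^2(\Omega)$ is onto (the paper actually states it as a bijection, but only surjectivity is used, exactly as you do when you solve $\Delta v=w$ to force $\|w\|_0^2=0$).
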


\begin{proof}
    The equivalence between (i) and (ii) follows from the fact that $\gamma > 0$ almost everywhere in $\Omega$. Since $\gamma A : \nabla^2 u \in L^2(\Omega)$ and $\gamma f \in L^2(\Omega)$, (ii) is satisfied if and only if
    \begin{equation*}
        \int_\Omega (\gamma A : \nabla^2 u) \phi\, d x = \int_\Omega \gamma f \phi\, d x, \quad \forall \phi \in L^2(\Omega).
    \end{equation*}
    Using that $\Delta$ is a bijection from $V^0$ to $L^2(\Omega)$, we deduce that (ii) is equivalent to (iii).
\end{proof}

Let us now explain how one can use the Lax-Milgram lemma in order to prove the well-posedness of \cref{eq:var}, provided that the Cordes condition is satisfied and that the scaling function $\gamma$ is chosen appropriately. We start by stating the continuity of $a$ and $L$.

\begin{proposition}[continuity]
    \label{prop:continuity}
    For  any $u$, $v \in H^2(\Omega)$, one has
    \begin{align*}
        a(u, v) &\lesssim \|\gamma A\|_{0, \infty} |u|_{2} |v|_{2}, &
        L(v) &\lesssim \|\gamma f\|_{0} |v|_{2},
    \end{align*}
with hidden constants only depending on $d$.
\end{proposition}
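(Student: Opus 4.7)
The plan is to bound both forms by applying the Cauchy-Schwarz inequality twice: first pointwise on the Frobenius inner product inside the integrand, and then globally in $L^2(\Omega)$. The only nonroutine ingredient is the pointwise bound $|\Delta v(x)| \le \sqrt{d}\,|\nabla^2 v(x)|$, which follows from writing $\Delta v = I_d : \nabla^2 v$ and applying Cauchy-Schwarz to the Frobenius inner product, since $|I_d| = \sqrt{d}$.

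For the bilinear form, I would first observe that at almost every $x \in \Omega$ the pointwise Frobenius Cauchy-Schwarz gives
\begin{equation*}
    |\gamma(x) A(x) : \nabla^2 u(x)| \le |\gamma(x) A(x)|\,|\nabla^2 u(x)| \le \|\gamma A\|_{0,\infty}\,|\nabla^2 u(x)|.
\end{equation*}
Integrating against $\Delta v$, applying Cauchy-Schwarz in $L^2(\Omega)$, and then using $\|\Delta v\|_0 \le \sqrt{d}\,|v|_2$ yields
\begin{equation*}
    a(u, v) \le \|\gamma A\|_{0,\infty}\,\|\nabla^2 u\|_0\,\|\Delta v\|_0 \le \sqrt{d}\,\|\gamma A\|_{0,\infty}\,|u|_2\,|v|_2,
\end{equation*}
which is the desired estimate with hidden constant $\sqrt{d}$.

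For the linear form the argument is one step shorter: Cauchy-Schwarz in $L^2(\Omega)$ followed by the same pointwise bound on $\Delta v$ gives
\begin{equation*}
    L(v) \le \|\gamma f\|_0\,\|\Delta v\|_0 \le \sqrt{d}\,\|\gamma f\|_0\,|v|_2.
\end{equation*}

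There is no real obstacle here; the proof is essentially a bookkeeping exercise. The only thing worth highlighting explicitly is the pointwise inequality $|\Delta v| \le \sqrt{d}\,|\nabla^2 v|$, since this is the place where the dimension-dependent constant enters, and it is exactly the same mechanism (Frobenius Cauchy-Schwarz against $I_d$) that underlies the Cordes framework developed in the preceding subsection.
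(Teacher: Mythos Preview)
Your argument is correct and is exactly the routine verification the paper has in mind; the paper's own proof is simply ``This is immediately verified.'' The pointwise bound $|\Delta v| \le \sqrt{d}\,|\nabla^2 v|$ you highlight is indeed the source of the $d$-dependent constant, and the paper records this same inequality in \cref{rem:poincare}.
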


\begin{proof}
    This is immediately verified.
\end{proof}

The proof of the coercivity of $a$ relies on the \emph{Miranda-Talenti estimate}, which we recall below.

\begin{theorem}[Miranda-Talenti estimate]
    \label{thm:miranda_talenti}
    For any $u \in V^0$, one has $\|\nabla^2 u\|_0 \leq \|\Delta u\|_0$.
\end{theorem}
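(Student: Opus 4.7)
The plan is to follow the classical two-step proof of the Miranda--Talenti estimate: establish an integration-by-parts identity in the smooth setting where convexity enters as the sign of the second fundamental form, and then extend to convex polytopal $\Omega$ by approximation.

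For the smooth setting, I would suppose first that $\partial \Omega$ is $C^2$ and $u \in C^\infty(\bar \Omega) \cap V^0$. Starting from $\int_\Omega (\Delta u)^2 \, dx = \sum_{i,j} \int_\Omega \partial_{ii} u \, \partial_{jj} u \, dx$ and integrating by parts twice (once moving a $\partial_j$ off the first factor, then moving a $\partial_i$ off the resulting third-order term), and performing the analogous computation for $\int_\Omega |\nabla^2 u|^2 \, dx = \sum_{i,j} \int_\Omega (\partial_{ij} u)^2 \, dx$, the interior terms cancel exactly and one is left with the Rellich-type identity
\begin{equation*}
    \int_\Omega \bigl[ (\Delta u)^2 - |\nabla^2 u|^2 \bigr] \, dx = \int_{\partial \Omega} \bigl[ \Delta u \, \partial_n u - (\nabla^2 u \, \nabla u) \cdot n \bigr] \, ds.
\end{equation*}
Since $u = 0$ on $\partial \Omega$ the tangential gradient vanishes, so $\nabla u = (\partial_n u) n$ on $\partial \Omega$; decomposing $\Delta u = \partial_{nn} u + H \partial_n u + \Delta_{\partial \Omega} u$ in a tubular neighborhood (where $H$ is the mean curvature and the tangential Laplacian of $u$ vanishes since $u|_{\partial \Omega} = 0$), the right-hand side collapses to $\int_{\partial \Omega} H \, (\partial_n u)^2 \, ds$, which is nonnegative thanks to convexity.

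For general $u \in V^0$ with smooth $\partial \Omega$, density of $C^\infty(\bar \Omega) \cap V^0$ in $V^0$ (with respect to the $H^2$ norm) allows passage to the limit on both sides of the inequality. For a convex polytope $\Omega$, I would approximate $\Omega$ from the inside by a nested exhaustion $\Omega_n \nearrow \Omega$ of smooth convex domains (which exists by convexity), correct the restriction $u|_{\Omega_n}$ by subtracting off a suitable biharmonic lift of its trace on $\partial \Omega_n$ to obtain $u_n \in V^0(\Omega_n)$, apply the smooth estimate on $\Omega_n$ to $u_n$, and pass to the limit as $n \to \infty$.

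The main obstacle is the polytopal approximation: one must show that the correction term vanishes in $H^2$ as $\Omega_n \nearrow \Omega$, so that the smooth inequality on $\Omega_n$ genuinely upgrades to the polytopal inequality on $\Omega$. This uses in a crucial way that $u \in H^1_0(\Omega)$, so the trace of $u$ on the nearby surfaces $\partial \Omega_n$ is small, together with sharp elliptic estimates on the shrinking collar $\Omega \setminus \Omega_n$. Convexity of $\Omega$ is used twice here: to guarantee $H \geq 0$ in Step~1 and to ensure that a smooth convex exhaustion from the inside exists in the first place, so that the lower-dimensional skeleton of the polytope does not produce boundary contributions of the wrong sign.
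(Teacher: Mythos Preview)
The paper does not actually prove this statement: its ``proof'' consists solely of a citation to the literature together with the remark that convexity of $\Omega$ and the Dirichlet condition are used. Your outline is precisely the classical argument found in those references (Maugeri--Palagachev--Softova; Smears--S\"uli), so your proposal agrees with the paper's deferred proof.

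One remark on your polytopal step: there is a more direct alternative to inner approximation by smooth convex domains. On each flat face of the polytope the curvature vanishes, so the boundary integral in your Rellich identity is zero over the union of open faces; the lower-dimensional skeleton has zero $(d-1)$-measure and does not contribute. Hence for smooth $u$ vanishing on $\partial\Omega$ one obtains the \emph{equality} $\|\nabla^2 u\|_0 = \|\Delta u\|_0$ directly on the polytope, and then density of $C^\infty(\overline\Omega) \cap V^0$ in $V^0$ handles general $u$. This sidesteps the biharmonic-lift correction and the collar estimates you correctly flagged as the main technical obstacle in your approximation route.
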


\begin{proof}
    See Refs.~\refcite{maugeri2000,smears2013}. Note that the proof uses the facts that $\Omega$ is convex and that $u|_{\partial \Omega} = 0$.
\end{proof}

\begin{proposition}[coercivity]
    \label{prop:coercivity}
    Assume that $A$ satisfies the $\mu$-Cordes condition for some $0 \leq \mu < 1$ and that $\gamma$ is a $\mu$-admissible scaling of $A$. Then, for any $u \in V^0$, one has $a(u, u) \geq (1 - \mu) |u|_2^2$.
\end{proposition}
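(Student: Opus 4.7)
The plan is to decompose $\gamma A$ as $I_d + (\gamma A - I_d)$ so that the bilinear form splits into a dominant Laplacian-squared term and a perturbation controlled by the Cordes parameter $\mu$. Specifically, since $\tr(\nabla^2 u) = \Delta u$, I would write
\begin{equation*}
    a(u,u) = \int_\Omega (\gamma A : \nabla^2 u)\, \Delta u\, dx = \int_\Omega (\Delta u)^2\, dx + \int_\Omega \bigl((\gamma A - I_d) : \nabla^2 u\bigr)\, \Delta u\, dx.
\end{equation*}

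Next, I would bound the perturbation term below using the pointwise Frobenius Cauchy--Schwarz inequality together with the admissibility of $\gamma$. Since $|\gamma(x) A(x) - I_d| \leq \mu$ almost everywhere by \cref{def:admissible_scaling}, one has pointwise $|(\gamma A - I_d):\nabla^2 u| \leq \mu\,|\nabla^2 u|$, and then Cauchy--Schwarz in $L^2(\Omega)$ yields
\begin{equation*}
    \int_\Omega \bigl((\gamma A - I_d):\nabla^2 u\bigr)\, \Delta u\, dx \geq -\mu\,\|\nabla^2 u\|_0\,\|\Delta u\|_0.
\end{equation*}

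The final step invokes the Miranda--Talenti estimate (\cref{thm:miranda_talenti}), applicable because $u \in V^0$ and $\Omega$ is convex: we have $\|\nabla^2 u\|_0 \leq \|\Delta u\|_0$. Combining the two bounds, one obtains $a(u,u) \geq \|\Delta u\|_0^2 - \mu\,\|\Delta u\|_0^2 = (1-\mu)\|\Delta u\|_0^2$, and one more application of Miranda--Talenti gives $\|\Delta u\|_0^2 \geq \|\nabla^2 u\|_0^2 = |u|_2^2$, yielding the desired lower bound $(1-\mu)|u|_2^2$.

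There is no serious obstacle here: the argument is essentially an algebraic rearrangement plus Miranda--Talenti. The only subtle point worth stressing is that Miranda--Talenti is used \emph{twice}, once to absorb $\|\nabla^2 u\|_0$ into $\|\Delta u\|_0$ in the perturbation estimate and once more (trivially) to conclude in the $H^2$-seminorm; without the convexity of $\Omega$ and the homogeneous trace of $u$, the coercivity would fail. This is precisely why the discrete analysis in \cref{subsec:scheme_well_posedness} will require the conforming nature of the VEM to inherit the continuous Miranda--Talenti estimate directly, rather than a discrete counterpart.
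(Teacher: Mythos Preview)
Your proof is correct and follows essentially the same approach as the paper: split $\gamma A = I_d + (\gamma A - I_d)$, bound the perturbation by $\mu\,\|\nabla^2 u\|_0\,\|\Delta u\|_0$ via the admissible scaling, and apply Miranda--Talenti twice. The paper's argument is line-for-line the same, including the explicit remark that Miranda--Talenti is invoked a second time to pass from $\|\Delta u\|_0^2$ to $|u|_2^2$.
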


\begin{proof}
    One has
    \begin{align*}
        a(u, u)
        &= \int_\Omega (\gamma A : \nabla^2 u) \Delta u\, d x
        = \|\Delta u\|_0^2 + \int_\Omega ((\gamma A - I_d) : \nabla^2 u) \Delta u\, d x \\
        &\geq \|\Delta u\|_0^2 - \|\gamma A - I_d\|_{0, \infty} \|\nabla^2 u\|_0 \|\Delta u\|_0
        \geq \|\Delta u\|_0^2 - \mu \|\nabla^2 u\|_0 \|\Delta u\|_0 \\
        &\geq (1 - \mu) \|\Delta u\|_0^2,
    \end{align*}
    where we used that $\gamma$ is a $\mu$-admissible scaling of $A$ (see \cref{def:admissible_scaling}) for the second inequality and the Miranda-Talenti estimate (\cref{thm:miranda_talenti}) for the last inequality. We conclude by invoking once again the Miranda-Talenti estimate.
\end{proof}

\begin{remark}[Poincaré-type inequality]
    \label{rem:poincare}
    The $H^2$ seminorm $|\cdot|_2$ is a norm on $V^0$, and for any $u \in V^0$, one has $\|u\|_2 \lesssim |u|_2$, with a hidden constant depending only on $d$ and $\Omega$. One way to prove this inequality is using the a priori estimate for the Poisson equation, according to which $\|u\|_2 \lesssim \|\Delta u\|_0$, and then the easily verified inequality $\|\Delta u\|_0 \leq \sqrt{d} \|\nabla^2 u\|_0 = \sqrt{d} |u|_2$.
\end{remark}

\begin{corollary}[well-posedness]
    \label{coro:well_posedness}
    Assume that $A$ satisfies the $\mu$-Cordes condition for some $0 \leq \mu < 1$ and that $\gamma$ is a $\mu$-admissible scaling of $A$. Then there exists a unique solution $u \in V^g$ to \cref{eq:var}.
\end{corollary}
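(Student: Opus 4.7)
The plan is to reduce the problem to a homogeneous boundary-value problem on the linear subspace $V^0$ and then apply the Lax-Milgram lemma, using the three ingredients already assembled: continuity of $a$ and $L$, coercivity of $a$ on $V^0$, and the fact that $|\cdot|_2$ is a norm on $V^0$ equivalent to $\|\cdot\|_2$.

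First I would lift the boundary data. Since $g \in H^2(\Omega)$ gives an element of $V^g$, any $u \in V^g$ can be written as $u = g + u_0$ with $u_0 \in V^0$, and \cref{eq:var} is then equivalent to finding $u_0 \in V^0$ such that
\begin{equation*}
    a(u_0, v) = L(v) - a(g, v), \quad \forall v \in V^0.
\end{equation*}
Next I would equip $V^0$ with the inner product induced by $|\cdot|_2$. By \cref{rem:poincare}, $|\cdot|_2$ is a norm on $V^0$ equivalent to $\|\cdot\|_2$, so $V^0$ is a Hilbert space in this norm (it is closed in $H^2(\Omega)$ as the kernel of the continuous trace map into $H^{3/2}(\partial\Omega)$).

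I would then check the hypotheses of Lax-Milgram on $V^0$ equipped with $|\cdot|_2$. Continuity of $a$ on $V^0 \times V^0$ follows from \cref{prop:continuity}, with norm bounded by $\|\gamma A\|_{0,\infty}$, which is finite (indeed less than $1 + \sqrt{d}$ by \cref{prop:cordes_scaled_bound}). The right-hand side $v \mapsto L(v) - a(g, v)$ is continuous on $V^0$ by the same proposition, with norm bounded in terms of $\|\gamma f\|_0$ and $\|\gamma A\|_{0,\infty} |g|_2$. Coercivity of $a$ on $V^0$ with constant $1 - \mu > 0$ is exactly \cref{prop:coercivity}.

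Lax-Milgram then yields a unique $u_0 \in V^0$, and hence a unique $u = g + u_0 \in V^g$, solving \cref{eq:var}. There is no real obstacle here: the only subtlety is verifying that $V^0$ is genuinely a Hilbert space for $|\cdot|_2$, but \cref{rem:poincare} handles this directly, and everything else is a routine bookkeeping of the propositions established above.
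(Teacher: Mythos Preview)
Your proposal is correct and follows essentially the same approach as the paper: lift the boundary data to reduce to a homogeneous problem on $V^0$, then apply Lax--Milgram using \cref{prop:continuity}, \cref{prop:coercivity}, and \cref{rem:poincare}. The only cosmetic difference is that the paper uses an arbitrary $u^g \in V^g$ for the lift rather than $g$ itself, and omits the explicit verification that $V^0$ is closed.
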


\begin{proof}
    Let $u^g \in V^g$. Then $u$ is solution \cref{eq:var} if and only if $u = u^g + u^0$, where $u^0$ is solution to: find $u^0 \in V^0$ such that
    \begin{equation*}
        a(u^0, v) = L(v) - a(u^g, v), \quad \forall v \in V^0.
    \end{equation*}
    The existence of a unique solution $u^0$ follows from \cref{prop:continuity,prop:coercivity} and the Lax-Milgram lemma.
\end{proof}

\section{Virtual element discretization}
\label{sec:discr}


In this section, we introduce and discuss the main properties of an $H^2$-conforming VEM based on the variational formulation \cref{eq:var}. After defining the scheme in \cref{subsec:scheme}, we prove its well-posedness in \cref{subsec:scheme_well_posedness} and derive an error estimate in \cref{subsec:error_estimate}.

\subsection{Description of the scheme}
\label{subsec:scheme}

Let us first introduce the setting of the discretization. We assume that the following parameters are given:
\begin{itemlist}
    \item A partition $\cT_h$ of $\Omega$ in finitely many nonoverlapping polytopes.
    \item A polynomial consistency order $m \geq 2$ for the numerical scheme.
\end{itemlist}
Moreover,  we assume that $\cT_h$ satisfies the following standard shape-regularity properties:
\begin{itemlist}
    \item There exists $\rho > 0$ such that any $K \in \cT_h$ and any $d'$-dimensional facet $F$ of $K$, with $1 \leq d' \leq d$, are star-shaped with respect to respectively a $d$-dimensional ball of radius $\rho h_K$ and a $d'$-dimensional ball of radius $\rho h_F$.
    \item There exists $\eta > 0$ such that any $d'$-dimensional facet $F$ of any $K \in \cT_h$, with $1 \leq d' \leq d$, satisfies $h_F \geq \eta h_K$.
\end{itemlist}
Here $h_\omega$ denotes the diameter of $\omega \subset \Omega$. We further define $h := \max_{K \in \cT_h} h_K$.

From now on, for any two quantities $a$, $b \in \RR$, we write $a \lesssim b$ if $a \leq C b$ with some constant $C > 0$ depending only on $d$, $\Omega$, $\rho$, $\eta$, and $m$.

For $k \in \NN$ and $K \in \cT_h$, we denote by $\PP_k(K)$ the space of polynomials of degree $k$ on $K$, and by $\PP_k(\cT_h)$ the set of functions $p \colon \Omega \to \RR$ whose restrictions $p|_K$ to each $K \in \cT_h$ belong to $\PP_k(K)$.
We denote by $\Pi_k^0 \colon L^2(\Omega) \to \PP_k(\cT_h)$ (respectively $L^2(K) \to \PP_k(K)$) the $L^2$ projection operator onto $\PP_k(\cT_h)$ (respectively onto $\PP_k(K)$, when applied to a function defined only on some $K \in \cT_h$). We naturally extend the definition of the projection operator $\Pi_k^0$ to matrix-valued functions.

It is also useful to define as follows, for any $K \subset \cT_h$, the local bilinear and linear forms $a_K \colon H^2(K) \times H^2(K) \to \RR$ and $L_K \colon H^2(K) \to \RR$:
\begin{align*}
    a_K(u, v) &:= \int_K (\gamma A : \nabla^2 u) \Delta v\, d x, &
    L_K(v) &:= \int_K \gamma f \Delta v\, d x.
\end{align*}

There are several variants of the construction of $H^2$-conforming virtual element methods. For completeness, we describe a possible construction in \cref{sec:h2vem}. For now, we abstract over some aspects of the construction by assuming that, on any cell $K \in \cT_h$, we are given the following (remember that $m \geq 2$ denotes an arbitrary polynomial consistency order):
\begin{itemlist}
    \item A local virtual element space $V_{h, K}$, which is a finite-dimensional subspace $V_{h, K}$ of $H^2(K)$ satisfying $\PP_m(K) \subset V_{h, K}$.
    \item A linear projection operator $\Pi_m^* \colon V_{h, K} \to \PP_m(K)$, which must be stable in the sense that, for any $u \in V_{h, K}$,
    \begin{equation}
        \label{eq:proj_stability}
        \|\Pi_m^* u\|_{2, K} \leq c_\pi \|u\|_{2, K},
    \end{equation}
    with a constant $c_\pi \geq 1$ not depending on $u$ and $K$.
    \item A stabilization form $s_{h, K} \colon V_{h, K} \times V_{h, K} \to \RR$, which must be a bilinear form satisfying, for any $u$, $v \in V_{h, K}$,
    \begin{equation}
        \label{eq:stab_scaling}
        \begin{split}
            s_{h, K}(u - \Pi_m^* u, u - \Pi_m^* u) &\geq c_* \|\Delta (u - \Pi_m^* u)\|_{0, K}^2, \\
            s_{h, K}(u - \Pi_m^* u, v - \Pi_m^* v) &\leq c^* |u|_{2, K} |v|_{2, K},
        \end{split}
    \end{equation}
    with constants $0 < c_* \leq 1 \leq c^*$ independent of $u$, $v$, and $K$.
\end{itemlist}
(Note that inequalities $c_\pi \geq 1$, $c_* \leq 1$, and $c^* \geq 1$ are not restrictive; the reason we assume them is to make the expression of our error estimates simpler.)

We shall show that the above virtual elements ansatz is sufficient to complete the analysis, postponing to~\cref{sec:h2vem} a complete description of a possible realization of the above framework.

Following \cref{eq:bilin_lin} and the usual approach for constructing virtual element methods, we define, for any $K \in \cT_h$, the bilinear and linear forms $a_{h, K} \colon V_{h, K} \times V_{h, K} \to \RR$ and $L_{h, K} \colon V_{h, K} \to \RR$ by
\begin{align*}
    a_{h, K}(u, v) &:= \int_K (\gamma A : \Pi_{m-2}^0 \nabla^2 u) \Pi_{m-2}^0 \Delta v\, d x + s_{h, K}(u - \Pi_m^* u, v - \Pi_m^* v), \\
    L_{h, K}(v) &:= \int_K \gamma f \Pi_{m-2}^0 \Delta v\, d x.
\end{align*}

We define global virtual element spaces $V_h \subset H^2(\Omega)$ and $V_h^0$, $V_h^{g_I} \subset V_h$ by
\begin{equation}
    \label{eq:vemspace}
    V_h := \{u \in H^2(\Omega) \mid u|_K \in V_{h, K},\, \forall K \in \cT_h\},
\end{equation}
and
\begin{align}
    \label{eq:vemspacedirichlet}
    V_h^0 &:= \{u \in V_h \mid u|_{\partial \Omega} = 0\}, &
    V_h^{g_I} &:= \{u \in V_h \mid u|_{\partial \Omega} = g_I|_{\partial \Omega}\},
\end{align}
where $g_I \in V_h$ is a given function. In order for $V_h^{g_I}$ to be a good approximation of the space $V^g$, the boundary values of $g_I$ should be chosen as some interpolation of those of $g$; an example of a suitable interpolation operator is described in \cref{subsec:h2vem_global_interp}.

We define the global counterparts $a_h \colon V_h \times V_h \to \RR$ and $L_h \colon V_h \to \RR$ to $a_{h, K}$ and $L_{h, K}$ by
\begin{align*}
    a_h(u, v) &:= \sum_{K \in \cT_h} a_{h, K}(u, v) = \int_\Omega (\gamma A : \Pi_{m-2}^0 \nabla^2 u) \Pi_{m-2}^0 \Delta v\, d x
    \iftoggle{author}{}{\\ &\qquad \qquad \qquad \qquad \qquad}
    + \sum_{K \in \cT_h} s_{h, K}(u - \Pi_m^* u, v - \Pi_m^* v), \\
    L_h(v) &:= \sum_{K \in \cT_h} L_{h, K}(v) = \int_\Omega \gamma f \Pi_{m-2}^0 \Delta v\, d x,
\end{align*}
where the projection operator $\Pi_m^* \colon V_h \to \PP_m(\cT_h)$ is the natural extension to $V_h$ of the local projection operators $\Pi_m^* \colon V_{h, K} \to \PP_m(K)$.

The virtual element scheme that we study is the following: find $u_h \in V_h^{g_I}$ such that
\begin{equation}
    \label{eq:scheme}
    a_h(u_h, v) = L_h(v), \quad \forall v \in V_h^0.
\end{equation}

\subsection{Existence of a unique discrete solution}
\label{subsec:scheme_well_posedness}

In order to show the well-posedness of the scheme \cref{eq:scheme}, we adapt to the discrete setting the arguments used in \cref{subsec:var} in the case of the continuous problem \cref{eq:var}: \cref{coro:discr_continuity}, \cref{prop:discr_coercivity}, and \cref{coro:discr_well_posedness} below are counterparts in the discrete setting to, respectively, \cref{prop:continuity}, \cref{prop:coercivity}, and \cref{coro:well_posedness}.

\begin{proposition}[continuity, cell-level]
    \label{prop:discr_continuity_cell}
    For any $K \in \cT_h$ and any $u$, $v \in V_{h, K}$, one has
    \begin{align*}
        a_{h, K}(u, v) &\lesssim (\|\gamma A\|_{0, \infty, K} + c^*) |u|_{2, K} |v|_{2, K}, &
        L_{h, K}(v) &\lesssim \|\gamma f\|_{0, K} |v|_{2, K},
    \end{align*}
    where $c^*$ is from \cref{eq:stab_scaling}.
\end{proposition}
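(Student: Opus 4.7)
The plan is to treat $a_{h,K}$ as the sum of its two defining contributions (the polynomial consistency piece and the stabilization piece) and estimate each separately, and then handle $L_{h,K}$ in one stroke. For the integral contribution to $a_{h,K}$, I would apply the pointwise Cauchy–Schwarz inequality in the Frobenius inner product to pull the essential supremum of $\gamma A$ outside, then use the ordinary $L^2$ Cauchy–Schwarz inequality on $K$; this yields a bound of the form $\|\gamma A\|_{0,\infty,K}\,\|\Pi_{m-2}^0 \nabla^2 u\|_{0,K}\,\|\Pi_{m-2}^0 \Delta v\|_{0,K}$. Since $\Pi_{m-2}^0$ is the $L^2$-orthogonal projection, it is a contraction in the $L^2$-norm, so each factor is dominated by the unprojected counterpart, and invoking the elementary inequality $\|\Delta w\|_{0,K} \leq \sqrt{d}\,|w|_{2,K}$ already used in \cref{rem:poincare} closes this piece with constant $\sqrt{d}\,\|\gamma A\|_{0,\infty,K}$.

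For the stabilization contribution, the second inequality in \cref{eq:stab_scaling} is stated precisely in the form $s_{h,K}(u-\Pi_m^* u, v-\Pi_m^* v) \leq c^* |u|_{2,K}|v|_{2,K}$, which provides the bound directly; in particular, there is no need to first estimate $|u-\Pi_m^*u|_{2,K}$ in terms of $|u|_{2,K}$ using the stability \cref{eq:proj_stability} of $\Pi_m^*$, which would otherwise require a Bramble–Hilbert/Poincaré argument with polynomial insertion. Adding the two contributions and absorbing $\sqrt{d}$ into the hidden constant yields the first estimate. The bound on $L_{h,K}$ is entirely analogous: $L^2$ Cauchy–Schwarz gives $\|\gamma f\|_{0,K}\,\|\Pi_{m-2}^0 \Delta v\|_{0,K}$, then $L^2$-contractivity of $\Pi_{m-2}^0$ and $\|\Delta v\|_{0,K} \leq \sqrt{d}\,|v|_{2,K}$ finish the proof. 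I do not anticipate any real obstacle; the entire statement reduces to the $L^2$-stability of a polynomial projector, a pointwise matrix Cauchy–Schwarz, and the upper bound built into the abstract stabilization hypothesis.
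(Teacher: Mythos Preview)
Your proposal is correct and is exactly the routine verification the paper leaves implicit (its proof reads ``This is immediately verified''). The decomposition into the consistency integral and the stabilization term, followed by Cauchy--Schwarz, $L^2$-contractivity of $\Pi_{m-2}^0$, the bound $\|\Delta w\|_{0,K}\le\sqrt{d}\,|w|_{2,K}$, and the direct use of the upper bound in \cref{eq:stab_scaling}, is precisely what is meant.
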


\begin{proof}
    This is immediately verified.
\end{proof}

\begin{corollary}[continuity, global level]
    \label{coro:discr_continuity}
    For any $u$, $v \in V_h$, one has
    \begin{align*}
        a_h(u, v) &\lesssim (\|\gamma A\|_{0, \infty} + c^*) |u|_2 |v|_2, &
        L_h(v) &\lesssim \|\gamma f\|_0 |v|_2,
    \end{align*}
    where $c^*$ is from \cref{eq:stab_scaling}.
\end{corollary}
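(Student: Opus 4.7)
The plan is to derive the global continuity estimates directly from the cell-level estimates in \cref{prop:discr_continuity_cell} by summing over $K \in \cT_h$ and applying the discrete Cauchy-Schwarz inequality. Since $a_h = \sum_{K \in \cT_h} a_{h,K}$ and $L_h = \sum_{K \in \cT_h} L_{h,K}$ by definition, this is a routine global-from-local assembly argument.

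First I would fix $u, v \in V_h$ and write $a_h(u,v) = \sum_{K \in \cT_h} a_{h,K}(u,v)$. Applying \cref{prop:discr_continuity_cell} on each cell yields
\begin{equation*}
    a_h(u,v) \lesssim \sum_{K \in \cT_h} (\|\gamma A\|_{0,\infty,K} + c^*) |u|_{2,K} |v|_{2,K} \leq (\|\gamma A\|_{0,\infty} + c^*) \sum_{K \in \cT_h} |u|_{2,K} |v|_{2,K},
\end{equation*}
where we used $\|\gamma A\|_{0,\infty,K} \leq \|\gamma A\|_{0,\infty}$ uniformly in $K$. The discrete Cauchy-Schwarz inequality then gives
\begin{equation*}
    \sum_{K \in \cT_h} |u|_{2,K} |v|_{2,K} \leq \Bigl(\sum_{K \in \cT_h} |u|_{2,K}^2\Bigr)^{1/2} \Bigl(\sum_{K \in \cT_h} |v|_{2,K}^2\Bigr)^{1/2} = |u|_2 |v|_2,
\end{equation*}
which yields the desired bound on $a_h(u,v)$.

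The estimate on $L_h$ is analogous: decomposing $L_h(v) = \sum_{K \in \cT_h} L_{h,K}(v)$ and applying \cref{prop:discr_continuity_cell} gives $L_h(v) \lesssim \sum_{K} \|\gamma f\|_{0,K} |v|_{2,K}$, and Cauchy-Schwarz together with $\sum_K \|\gamma f\|_{0,K}^2 = \|\gamma f\|_0^2$ and $\sum_K |v|_{2,K}^2 = |v|_2^2$ concludes. There is no real obstacle here---the cell-level estimates have already absorbed all the work involving the projection operators $\Pi_{m-2}^0$, the stabilization bound \cref{eq:stab_scaling}, and the Cauchy-Schwarz inequality on each cell; the global statement only requires an additional discrete Cauchy-Schwarz to move from cell-wise products to products of global seminorms.
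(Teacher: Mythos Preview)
The proposal is correct and takes essentially the same approach as the paper: sum the cell-level estimates of \cref{prop:discr_continuity_cell} over $K \in \cT_h$ and apply the discrete Cauchy-Schwarz inequality, which is precisely what the paper's one-line proof indicates.
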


\begin{proof}
    This follows from \cref{prop:discr_continuity_cell} and the Cauchy-Schwarz inequality.
\end{proof}

\begin{proposition}[coercivity]
    \label{prop:discr_coercivity}
    Assume that $A$ satisfies the $\mu$-Cordes condition for some $0 \leq \mu < 1$ and that $\gamma$ is a $\mu$-admissible scaling of $A$. Then for any $u \in V_h^0$, one has $a_h(u, u) \geq (c_* - \mu) |u|_2^2$, where $c_*$ is from \cref{eq:stab_scaling}.
\end{proposition}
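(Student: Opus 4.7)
The plan is to transcribe the proof of \cref{prop:coercivity} into the discrete setting, being careful about the $L^2$-projections $\Pi_{m-2}^0$ appearing in $a_h$ and about the stabilization. The starting point is the algebraic identity $I_d : \Pi_{m-2}^0 \nabla^2 u = \Pi_{m-2}^0 \Delta u$, valid cell-by-cell because the $L^2$-projection commutes with contraction against the constant tensor $I_d$. Splitting $\gamma A = I_d + (\gamma A - I_d)$ therefore yields
\begin{equation*}
    a_h(u, u) = \|\Pi_{m-2}^0 \Delta u\|_0^2 + \int_\Omega ((\gamma A - I_d) : \Pi_{m-2}^0 \nabla^2 u)\, \Pi_{m-2}^0 \Delta u\, dx + \sum_{K \in \cT_h} s_{h, K}(u - \Pi_m^* u, u - \Pi_m^* u).
\end{equation*}

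The cross integral is estimated pointwise by Cauchy-Schwarz and the $\mu$-admissibility of $\gamma$, then globally by $L^2$ Cauchy-Schwarz, giving the bound $\mu \|\Pi_{m-2}^0 \nabla^2 u\|_0 \|\Pi_{m-2}^0 \Delta u\|_0$. Since $\Pi_{m-2}^0$ is an $L^2$-contraction and, crucially, $V_h^0 \subset V^0$ so that the \emph{continuous} Miranda-Talenti estimate (\cref{thm:miranda_talenti}) applies, one obtains $\|\Pi_{m-2}^0 \nabla^2 u\|_0 \leq \|\nabla^2 u\|_0 \leq \|\Delta u\|_0$. For the stabilization sum, I would invoke the lower bound in \cref{eq:stab_scaling} together with the observation that $\Delta \Pi_m^* u \in \PP_{m-2}(K)$ (since $\Pi_m^* u \in \PP_m(K)$), which implies $(I - \Pi_{m-2}^0) \Delta(u - \Pi_m^* u) = (I - \Pi_{m-2}^0) \Delta u$ on each $K$; by Pythagorean orthogonality one then has $\|\Delta(u - \Pi_m^* u)\|_{0, K}^2 \geq \|(I - \Pi_{m-2}^0) \Delta u\|_{0, K}^2$.

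Setting $\alpha := \|\Pi_{m-2}^0 \Delta u\|_0$ and $\beta := \|(I - \Pi_{m-2}^0) \Delta u\|_0$, so that $\alpha^2 + \beta^2 = \|\Delta u\|_0^2$ by orthogonality, the three ingredients combine as
\begin{equation*}
    a_h(u, u) \geq \alpha^2 + c_* \beta^2 - \mu \alpha \sqrt{\alpha^2 + \beta^2} \geq (1 - \mu) \alpha^2 + (c_* - \mu) \beta^2 \geq (c_* - \mu) \|\Delta u\|_0^2,
\end{equation*}
where the middle inequality follows from $\alpha \sqrt{\alpha^2 + \beta^2} \leq \alpha^2 + \beta^2$ and the last from $c_* \leq 1$. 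A final invocation of Miranda-Talenti gives $\|\Delta u\|_0^2 \geq \|\nabla^2 u\|_0^2 = |u|_2^2$, which closes the argument. The only delicate point is the bookkeeping at the projection level: it is the identity $I_d : \Pi_{m-2}^0 \nabla^2 u = \Pi_{m-2}^0 \Delta u$ together with the extraction of $\|(I - \Pi_{m-2}^0)\Delta u\|_{0,K}^2$ from the stabilization that lets the \emph{continuous} Miranda-Talenti estimate carry the whole proof, with no need for a discrete counterpart.
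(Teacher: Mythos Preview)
Your proof is correct and follows essentially the same route as the paper: the same splitting $\gamma A = I_d + (\gamma A - I_d)$, the same use of the $L^2$-contraction of $\Pi_{m-2}^0$ together with the continuous Miranda--Talenti estimate on $V_h^0 \subset V^0$, and the same extraction of $\|(I-\Pi_{m-2}^0)\Delta u\|_{0,K}^2$ from the stabilization via $\Delta\Pi_m^* u \in \PP_{m-2}(K)$. The only cosmetic difference is that the paper drops both projections in the cross term at once (obtaining $-\mu\|\Delta u\|_0^2$ directly), whereas you keep the factor $\alpha = \|\Pi_{m-2}^0\Delta u\|_0$ and then use $\alpha\sqrt{\alpha^2+\beta^2} \le \alpha^2+\beta^2$; the two routes coincide after that step.
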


\begin{proof}
    Recall that
    \begin{equation*}
        a_h(u, u) = \int_\Omega (\gamma A : \Pi_{m-2}^0 \nabla^2 u) \Pi_{m-2}^0 \Delta u\, d x + \sum_{K \in \cT_h} s_{h, K}(u - \Pi_m^* u, u - \Pi_m^* u).
    \end{equation*}
    One has
    \begin{align*}
        \int_\Omega (\gamma A : \Pi_{m-2}^0 \nabla^2 u) \Pi_{m-2}^0 \Delta u\, d x
        &= \|\Pi_{m-2}^0 \Delta u\|_0^2 \\
        &\quad + \int_\Omega ((\gamma A - I_d) : \Pi_{m-2}^0 \nabla^2 u) \Pi_{m-2}^0 \Delta u\, d x \\
        &\geq \|\Pi_{m-2}^0 \Delta u\|_0^2 \\
        &\quad - \|\gamma A - I_d\|_{0, \infty} \|\Pi_{m-2}^0 \nabla^2 u\|_0 \|\Pi_{m-2}^0 \Delta u\|_0 \\
        &\geq \|\Pi_{m-2}^0 \Delta u\|_0^2 - \|\gamma A - I_d\|_{0, \infty} \|\nabla^2 u\|_0 \|\Delta u\|_0 \\
        &\geq \|\Pi_{m-2}^0 \Delta u\|_0^2 - \mu \|\nabla^2 u\|_0 \|\Delta u\|_0 \\
        &\geq \|\Pi_{m-2}^0 \Delta u\|_0^2 - \mu \|\Delta u\|_0^2,
    \end{align*}
    where we used that $\gamma$ is a $\mu$-admissible scaling of $A$ (see \cref{def:admissible_scaling}) for the third inequality and the Miranda-Talenti estimate (\cref{thm:miranda_talenti}) for the last inequality. On the other hand,
    \begin{align*}
        \sum_{K \in \cT_h} s_{h, K}(u - \Pi_m^* u, u - \Pi_m^* u)
        &\geq c_* \sum_{K \in \cT_h} \|\Delta u - \Delta \Pi_m^* u\|_{0, K}^2 \\
        &\geq c_* \sum_{K \in \cT_h} \|\Delta u - \Pi_{m-2}^0 \Delta u\|_{0, K}^2 \\
        &= c_* \|\Delta u - \Pi_{m-2}^0 \Delta u\|_0^2,
    \end{align*}
    where for the second inequality we used that $\Delta \Pi_m^* u \in \PP_{m-2}(K)$ and that $\Pi_{m-2}^0 \Delta u$ is the $L^2$ projection of $\Delta u$ onto $\PP_{m-2}(K)$. Summing up the above, one has
    \begin{align*}
        a_h(u, u)
        &\geq \|\Pi_{m-2}^0 \Delta u\|_0^2 - \mu \|\Delta u\|_0^2 + c_* \|\Delta u - \Pi_{m-2}^0 \Delta u\|_0^2 \\
        &\geq c_* (\|\Pi_{m-2}^0 \Delta u\|_0^2 + \|\Delta u - \Pi_{m-2}^0 \Delta u\|_0^2) - \mu \|\Delta u\|_0^2
        = (c_* - \mu) \|\Delta u\|_0^2.
    \end{align*}
    We conclude using the Miranda-Talenti estimate.
\end{proof}

Note that the above proves the coercivity of $a_h$ on $V_h^0 \times V_h^0$ only when $c_* > \mu$. In that case, we can apply the Lax-Milgram lemma to establish the well-posedness of the VEM.

\begin{corollary}[well-posedness of the scheme]
    \label{coro:discr_well_posedness}
    Assume that $A$ satisfies the $\mu$-Cordes condition for some $0 \leq \mu < 1$, that $\gamma$ is a $\mu$-admissible scaling of $A$, and that $c_* > \mu$ in \cref{eq:stab_scaling}. Then there exists a unique solution $u_h \in V_h^{g_I}$ to \cref{eq:scheme}.
\end{corollary}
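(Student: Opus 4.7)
The plan is to mirror the argument used in \cref{coro:well_posedness} for the continuous problem, reducing the inhomogeneous Dirichlet problem to a problem posed on $V_h^0$ and applying the Lax-Milgram lemma. Specifically, I would observe that $u_h \in V_h^{g_I}$ solves \cref{eq:scheme} if and only if $u_h = g_I + u_h^0$ with $u_h^0 \in V_h^0$ satisfying
\begin{equation*}
    a_h(u_h^0, v) = L_h(v) - a_h(g_I, v), \quad \forall v \in V_h^0,
\end{equation*}
where the shift is well-defined since $g_I \in V_h$ by construction of $V_h^{g_I}$ (see \cref{eq:vemspacedirichlet}) and $V_h^0$ is a linear subspace of $V_h$.

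Next, I would verify the Lax-Milgram hypotheses on the Hilbert space $V_h^0$ equipped with the $H^2$-seminorm. Since $V_h^0 \subset V^0$ by the $H^2$-conformity of $V_h$ encoded in \cref{eq:vemspace,eq:vemspacedirichlet}, the Poincaré-type inequality of \cref{rem:poincare} implies that $|\cdot|_2$ is indeed a norm on $V_h^0$ equivalent to the $H^2$-norm, so that $V_h^0$ is a Hilbert space under $|\cdot|_2$. Continuity of $a_h$ and of the perturbed linear form $v \mapsto L_h(v) - a_h(g_I, v)$ with respect to this norm follows directly from \cref{coro:discr_continuity}, using $\|\gamma A\|_{0, \infty} \lesssim 1$ by \cref{prop:cordes_scaled_bound} and $\|\gamma f\|_0 \lesssim \|f\|_0$. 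Coercivity of $a_h$ on $V_h^0$ with constant $c_* - \mu > 0$ is exactly the content of \cref{prop:discr_coercivity}, and it is precisely here that the assumption $c_* > \mu$ intervenes.

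The Lax-Milgram lemma then yields a unique $u_h^0 \in V_h^0$, and setting $u_h := g_I + u_h^0$ produces the unique solution of \cref{eq:scheme} in $V_h^{g_I}$. There is no serious obstacle: the proof is a direct transcription of the continuous well-posedness argument, and all analytic work has already been done in \cref{prop:discr_coercivity,coro:discr_continuity}; the only subtlety to highlight is the necessity of the condition $c_* > \mu$ to secure coercivity of the discrete bilinear form.
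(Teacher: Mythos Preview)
Your proposal is correct and follows essentially the same approach as the paper: shift by an element of $V_h^{g_I}$ to reduce to a homogeneous problem on $V_h^0$, then invoke Lax--Milgram via \cref{coro:discr_continuity} and \cref{prop:discr_coercivity}. The only cosmetic difference is that the paper shifts by an arbitrary $u_h^{g_I}\in V_h^{g_I}$ while you use $g_I$ itself; your additional remarks on why $|\cdot|_2$ is a norm on $V_h^0$ are sound but not needed beyond what the paper already records in \cref{rem:poincare}.
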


\begin{proof}
    Let $u_h^{g_I} \in V_h^{g_I}$. Then $u_h$ is solution \cref{eq:scheme} if and only if $u_h = u_h^{g_I} + u_h^0$, where $u_h^0$ is solution to: find $u_h^0 \in V_h^0$ such that
    \begin{equation*}
        a_h(u_h^0, v) = L_h(v) - a_h(u_h^{g_I}, v), \quad \forall v \in V_h^0.
    \end{equation*}
    The existence of a unique solution $u_h^0$ follows from \cref{coro:discr_continuity}, \cref{prop:discr_coercivity}, and the Lax-Milgram lemma.
\end{proof}

\begin{remark}
    The assumption \cref{eq:stab_scaling} on the scaling of $s_{h, K}$ is standard in the virtual element setting. However, while in the setting of divergence form elliptic equations it is usually sufficient to assume that $c_* > 0$, here we require $c_* > \mu$. This requirement is not restrictive. Admissible definitions of the stabilization form $s_{h, K}$ can always include some scaling factor, which can always be chosen large enough so that $c_* > \mu$. We refer to \cref{subsec:h2vem_proj_stab} for an example with more discussion.
\end{remark}

\subsection{Error estimate}
\label{subsec:error_estimate}

We now prove our main error estimate for the scheme \cref{eq:scheme}.

\begin{theorem}[error estimate]
    \label{thm:error_estimate}
    Assume that $A$ satisfies the $\mu$-Cordes condition for some $0 \leq \mu < 1$, that $\gamma$ is a $\mu$-admissible scaling of $A$, and that $c_* > \mu$ in \cref{eq:stab_scaling}. Let $u \in V^g$ and $u_h \in V_h^{g_I}$ be respectively the unique solutions to \cref{eq:var} and \cref{eq:scheme}. Then, for any $u_I \in V_h^{g_I}$,
    \begin{equation}
        \label{eq:error_estimate}
        \|u - u_h\|_2 \lesssim \frac{c^*}{c_* - \mu} \left(\|u - u_I\|_2 + |u - \Pi_m^0 u|_{2, h}\right).
    \end{equation}
\end{theorem}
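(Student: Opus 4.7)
The plan is a standard Strang-type argument tailored to the VEM setting. By the triangle inequality $\|u - u_h\|_2 \leq \|u - u_I\|_2 + \|u_I - u_h\|_2$ and the Poincaré-type inequality of \cref{rem:poincare} applied to $u_I - u_h \in V_h^0$, it suffices to control the seminorm $|u_I - u_h|_2$. Discrete coercivity (\cref{prop:discr_coercivity}) then gives
\begin{equation*}
    (c_* - \mu) |u_I - u_h|_2^2 \leq a_h(u_I - u_h, \delta), \quad \delta := u_I - u_h \in V_h^0,
\end{equation*}
and after using the discrete equation $a_h(u_h, \delta) = L_h(\delta)$, the right-hand side equals $a_h(u_I, \delta) - L_h(\delta)$, which is what remains to be bounded.

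The conceptual pivot is the use of the \emph{strong form} $\gamma A : \nabla^2 u = \gamma f$ (\cref{prop:consistency}) to rewrite $L_h(\delta) = \int_\Omega (\gamma A : \nabla^2 u)\,\Pi_{m-2}^0 \Delta \delta\,d x$. This avoids ever approximating $f$ or $\gamma A$ by polynomials individually, circumventing any regularity requirement on the data beyond $A \in L^\infty$ and $f \in L^2$: the only function approximated by polynomials is $\nabla^2 u \in L^2$. The consistency error splits as
\begin{equation*}
    a_h(u_I, \delta) - L_h(\delta) = \int_\Omega \gamma A : \bigl(\Pi_{m-2}^0 \nabla^2 u_I - \nabla^2 u\bigr)\, \Pi_{m-2}^0 \Delta \delta\,d x + \sum_{K \in \cT_h} s_{h, K}(u_I - \Pi_m^* u_I, \delta - \Pi_m^* \delta).
\end{equation*}
For the integral, I would insert $\Pi_{m-2}^0 \nabla^2 u$ to decompose $\Pi_{m-2}^0 \nabla^2 u_I - \nabla^2 u = \Pi_{m-2}^0 \nabla^2 (u_I - u) - (I - \Pi_{m-2}^0) \nabla^2 u$. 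The $L^2$-stability of $\Pi_{m-2}^0$ controls the first term by $|u - u_I|_2$, and the best-approximation property of $\Pi_{m-2}^0$ against the piecewise polynomial $\nabla^2 \Pi_m^0 u \in [\PP_{m-2}(\cT_h)]^{d \times d}$ bounds the second by $|u - \Pi_m^0 u|_{2, h}$. The coefficient factor is then absorbed via the uniform bound $\|\gamma A\|_{0, \infty} < 1 + \sqrt{d}$ from \cref{prop:cordes_scaled_bound}, together with $\|\Delta \delta\|_0 \lesssim |\delta|_2$.

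The more delicate piece is the stabilization sum. A direct application of the continuity bound in \cref{eq:stab_scaling} yields $c^* |u_I|_{2, K}\, |\delta|_{2, K}$, which fails to vanish with $h$. The key trick is polynomial invariance of the projection $\Pi_m^*$: for any $p \in \PP_m(K)$, one has $u_I - \Pi_m^* u_I = (u_I - p) - \Pi_m^*(u_I - p)$, so applying the second inequality of \cref{eq:stab_scaling} with $u_I - p$ in place of $u_I$ gives
\begin{equation*}
    s_{h, K}(u_I - \Pi_m^* u_I, \delta - \Pi_m^* \delta) \leq c^* |u_I - p|_{2, K}\, |\delta|_{2, K}.
\end{equation*}
Choosing $p = (\Pi_m^0 u)|_K$, followed by a triangle inequality and Cauchy-Schwarz across cells, delivers the $c^*$-contribution in the stated bound, expressed precisely in terms of the seminorm quantities $\|u - u_I\|_2$ and $|u - \Pi_m^0 u|_{2, h}$.

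Combining the two contributions, dividing through by $|u_I - u_h|_2$, and folding the external $\|u - u_I\|_2$ produced by the initial triangle inequality into the factor $c^*/(c_* - \mu) \geq 1$ concludes the proof. The main obstacle is the bookkeeping of the stabilization term: without the polynomial-invariance identity above, one risks either spurious full-$H^2$-norm contributions (incompatible with the seminorm appearing on the right-hand side) or the need to assume extra regularity of $A$ and $f$. The invocation of the strong form in the treatment of $L_h(\delta)$ is what makes the whole argument self-contained under the minimal data regularity inherent to \cref{eq:main}.
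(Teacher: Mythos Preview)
Your proof is correct and follows essentially the same approach as the paper: triangle inequality plus Poincaré, discrete coercivity, then use of the strong form $\gamma A:\nabla^2 u=\gamma f$ together with the polynomial invariance of $\Pi_m^*$ to control the consistency and stabilization terms by $|u-u_I|_2$ and $|u-\Pi_m^0 u|_{2,h}$, with $\|\gamma A\|_{0,\infty}$ absorbed via \cref{prop:cordes_scaled_bound}. The only difference is organizational: the paper inserts $\Pi_m^0 u$ once at the level of the full local form $a_{h,K}$, so that the stabilization of $\Pi_m^0 u$ vanishes automatically, whereas you split into integral and stabilization first and then insert $\Pi_{m-2}^0\nabla^2 u$ and $\Pi_m^0 u$ separately; the resulting bounds are identical.
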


\begin{proof}
    Using the triangle inequality and \cref{rem:poincare}, one has $\|u - u_h\|_2 \leq \|u - u_I\|_2 + \|u_I - u_h\|_2 \lesssim \|u - u_I\|_2 + |u_I - u_h|_2$, so we only have to estimate $|u_I - u_h|_2$. For convenience, we let $\delta_h := u_I - u_h$. Using \cref{prop:discr_coercivity} and the fact that $u$ and $u_h$ are solutions to respectively \cref{eq:var} and \cref{eq:scheme},
    \begin{align*}
        (c_* - \mu) |\delta_h|_2^2
        &\leq a_h(\delta_h, \delta_h)
        = a_h(u_I, \delta_h) - a_h(u_h, \delta_h)
        = a_h(u_I, \delta_h) - L_h(\delta_h) \\
        &= \sum_{K \in \cT_h} (a_{h, K}(u_I, \delta_h) - L_{h, K}(\delta_h)) \\
        &= \sum_{K \in \cT_h} a_{h, K}(u_I - \Pi_m^0 u, \delta_h) + \sum_{K \in \cT_h} (a_{h, K}(\Pi_m^0 u, \delta_h) - L_{h, K}(\delta_h)).
    \end{align*}
    For any $K \in \cT_h$, by \cref{prop:cordes_scaled_bound} one had $\|\gamma A\|_{0, \infty, K} \lesssim 1 \leq c^*$, thus, using \cref{prop:discr_continuity_cell} and the triangle inequality,
    \begin{equation*}
        a_{h, K}(u_I - \Pi_m^0 u, \delta_h) \lesssim c^* (|u - u_I|_{2, K} + |u - \Pi_m^0 u|_{2, K}) |\delta_h|_{2, K}.
    \end{equation*}
    To estimate the term $a_{h, K}(\Pi_m^0 u, \delta_h) - L_{h, K}(\delta_h)$, we note that
    \begin{equation*}
        s_{h, K}(\Pi_m^0 u - \Pi_m^* \Pi_m^0 u, \delta_h - \Pi_m^* \delta_h) = s_{h, K}(0, \delta_h - \Pi_m^* \delta_h) = 0,
    \end{equation*}
    so that
    \begin{equation*}
        a_{h, K}(\Pi_m^0 u, \delta_h) - L_{h, K}(\delta_h) = \int_K (\gamma A : \nabla^2 \Pi_m^0 u) \Pi_{m-2}^0 \Delta \delta_h\, d x - \int_K \gamma f \Pi_{m-2}^0 \Delta \delta_h\, d x.
    \end{equation*}
    By \cref{prop:consistency}, one has $f = A : \nabla^2 u$ almost everywhere in $\Omega$, and thus
    \begin{align*}
        a_{h, K}(\Pi_m^0 u, \delta_h) - L_{h, K}(\delta_h)
        &= \int_K (\gamma A : \nabla^2 \Pi_m^0 u) \Pi_{m-2}^0 \Delta \delta_h\, d x \\
        &\quad - \int_K (\gamma A : \nabla^2 u) \Pi_{m-2}^0 \Delta \delta_h\, d x \\
        &= \int_K (\gamma A : \nabla^2 (\Pi_m^0 u - u)) \Pi_{m-2}^0 \Delta \delta_h\, d x.
    \end{align*}
    We deduce that $a_{h, K}(\Pi_m^0 u, \delta_h) - L_{h, K}(\delta_h) \lesssim \|\gamma A : \nabla^2 (u - \Pi_m^0 u)\|_{0, K} |\delta_h|_{2, K}$, and then, using \cref{prop:cordes_scaled_bound}, that
    \begin{equation*}
        a_{h, K}(\Pi_m^0 u, \delta_h) - L_{h, K}(\delta_h) \lesssim |u - \Pi_m^0 u|_{2, K} |\delta_h|_{2, K}.
    \end{equation*}
    Collecting the above bounds we easily conclude.
\end{proof}

\begin{remark}
    The term $|u - \Pi_m^0 u|_{2, h}$ in \cref{eq:error_estimate} can be estimated using the classical Scott-Dupont theory\cite{brenner2008}, depending on the regularity of $u$. Estimation of  the term $\|u - u_I\|_2$ requires the construction of a suitable interpolant $u_I \in V_h^{g_I}$ of $u$. This depends on   the specific  definition of the local virtual element spaces $V_{h, K}$, $K \in \cT_h$. In \cref{subsec:h2vem_global_interp} we provide an instance of such spaces for which  we are able to prove optimal interpolation error bounds, and thus deduce optimal rate of convergence for the resulting method, cf. \cref{thm:convergence_rate}.
\end{remark}

\begin{remark}
    \label{rem:no_data_regularity}
    In the context of divergence form problems, error estimates for virtual element methods usually involve regularity of the coefficients and data, see for instance Theorem~6.2 in Ref.~\refcite{cangiani2017}. In contrast, the error estimate \cref{eq:error_estimate} does not involve the functions $A$, $f$, and $\gamma$. This is thanks to \cref{prop:cordes_scaled_bound} and, importantly, to the fact that no integration by parts was performed in order to construct the variational formulation \cref{eq:var}, which allowed us to directly use the almost-everywhere equality $A : \nabla^2 u = f$ in the proof of \cref{thm:error_estimate}, rather than relying on the variational formulation \cref{eq:var} of the continuous problem. Observe that a similar argument was used in Ref.~\refcite{kawecki2021} for the analysis of discontinuous Galerkin and $C^0$-interior penalty finite element methods.
\end{remark}

\begin{remark}
    \label{rem:err_to_proj}
    Typically in virtual element methods, the numerical solution $u_h \in V_h^{g_I}$ to \cref{eq:scheme} is not accessible in practice. Instead,  the projection $\Pi_m^* u_h$ is accessible (see \cref{sec:h2vem} for more details). For this reason, it makes sense to consider the error $\|u - \Pi_m^* u_h\|_2$. This error is readily controlled combining the above estimate with the following result, which we isolate in a separate proposition since it is useful in general and can also be combined with \cref{thm:quadr_error_estimate_continuous} or \cref{thm:quadr_error_estimate_discontinuous} below, rather than \cref{thm:error_estimate}:
\end{remark}

\begin{proposition}[computable approximation]
    \label{prop:err_to_proj}
    For any $u \in H^2(\Omega)$ and $u_h \in V_h$, one has
    \begin{equation*}
        \|u - \Pi_m^* u_h\|_2 \lesssim c_\pi (|u - \Pi_m^0 u|_{2, h} + \|u - u_h\|_2).
    \end{equation*}
\end{proposition}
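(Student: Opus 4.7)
The plan is to pivot through $\Pi_m^0 u$, exploiting the identity $\Pi_m^* \Pi_m^0 u = \Pi_m^0 u$ cell by cell, which holds because $\Pi_m^0 u|_K \in \PP_m(K) \subset V_{h, K}$ and $\Pi_m^*$ is a projection onto $\PP_m(K)$. (The same observation was used implicitly in the proof of \cref{thm:error_estimate}.) Writing, on each $K \in \cT_h$,
\begin{equation*}
    u - \Pi_m^* u_h = (u - \Pi_m^0 u) + \Pi_m^*(\Pi_m^0 u - u_h),
\end{equation*}
applying the local stability bound \cref{eq:proj_stability} to the second summand, and reintroducing $u$ via one more triangle inequality, I would obtain
\begin{equation*}
    \|u - \Pi_m^* u_h\|_{2, K} \le (1 + c_\pi)\|u - \Pi_m^0 u\|_{2, K} + c_\pi \|u - u_h\|_{2, K}.
\end{equation*}

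Squaring, summing over $K$, and using $c_\pi \ge 1$ then yields
\begin{equation*}
    \|u - \Pi_m^* u_h\|_2 \lesssim c_\pi \bigl(\|u - \Pi_m^0 u\|_{2, h} + \|u - u_h\|_2\bigr),
\end{equation*}
where both norms involving piecewise polynomial objects are understood in the broken sense. This is not quite the claim, since the right-hand side contains the full broken $H^2$ norm of $u - \Pi_m^0 u$ rather than the seminorm. The remaining, and only substantive, step is therefore to establish $\|u - \Pi_m^0 u\|_{2, h} \lesssim |u - \Pi_m^0 u|_{2, h}$.

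This in turn boils down to the cell-wise Poincaré-type estimate $\|w\|_{2, K} \lesssim |w|_{2, K}$ for every $w \in H^2(K)$ with $\Pi_m^0 w = 0$, which applies to $w = u - \Pi_m^0 u$ by idempotence of $\Pi_m^0$. The key algebraic fact is that $\ker \nabla^2 = \PP_1(K) \subset \PP_m(K)$, so the only element of $\PP_1$ in the kernel of $\Pi_m^0$ is zero; hence $|\cdot|_{2, K}$ is a norm on $\{w \in H^2(K) : \Pi_m^0 w = 0\}$. I would prove the bound by a standard compactness / Deny--Lions argument on a reference cell, followed by affine scaling to $K$ while tracking powers of $h_K \le \diam \Omega$, producing a constant that depends only on $d$, $m$, $\rho$, $\eta$, and $\Omega$. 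The only real obstacle is verifying that this Poincaré-type constant is uniform across the family of shape-regular polytopal cells admitted by the framework; once that is in hand, everything else is purely algebraic and uses only \cref{eq:proj_stability} together with the polynomial-preservation identity $\Pi_m^* \Pi_m^0 u = \Pi_m^0 u$.
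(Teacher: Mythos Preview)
Your argument is correct and matches the paper's proof almost exactly: the same pivot through $\Pi_m^0 u$ via $\Pi_m^*\Pi_m^0 u=\Pi_m^0 u$, the same use of \cref{eq:proj_stability}, and the same reduction to $\|u-\Pi_m^0 u\|_{2,h}\lesssim |u-\Pi_m^0 u|_{2,h}$. For that last step the paper simply invokes standard Scott--Dupont polynomial projection estimates on star-shaped domains, which directly delivers the uniform constant you flag as the only real obstacle---this replaces your reference-cell-plus-scaling sketch, which does not apply verbatim since general polytopal cells admit no common reference element.
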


\begin{proof}
    By the triangle inequality and then \cref{eq:proj_stability}, one has
    \begin{align*}
        \|u - \Pi_m^* u_h\|_{2, h}
        &\leq \|u - \Pi_m^0 u\|_{2, h} + \|\Pi_m^0 u - \Pi_m^* u_h\|_{2, h} \\
        &= \|u - \Pi_m^0 u\|_{2, h} + \|\Pi_m^* (\Pi_m^0 u - u_h)\|_{2, h} \\
        &\leq \|u - \Pi_m^0 u\|_{2, h} + c_\pi \|\Pi_m^0 u - u_h\|_{2, h} \\
        &\lesssim c_\pi (\|u - \Pi_m^0 u\|_{2, h} + \|u - u_h\|_2).
    \end{align*}
    By standard polynomial projection estimates\cite{scott1990}, $\|u - \Pi_m^0 u\|_{2, h} = \|(u - \Pi_m^0 u) - \Pi_m^0 (u - \Pi_m^0 u)\|_{2, h} \lesssim |u - \Pi_m^0 u|_{2, h}$, which concludes the proof.
\end{proof}

\section{The effect of numerical integration}
\label{sec:quadrature}

As any finite element method, the assembly of a virtual element method requires some form of quadrature;  see e.g. Ref.~\refcite{cangiani2017} where this issue is discussed in the case of elliptic problems in divergence form. Given that the case of irregular data is of paramount importance, it is particularly relevant to prove that the analysis detailed in the previous section extends to a variant of the method with numerical quadrature. Here, we perform such analysis under only slightly more restrictive assumptions.

Since quadrature requires pointwise evaluation, we assume that, for any $K \in \cT_h$, we are given specific Sobolev representatives of $A_K$, $f_K$, and $\gamma_K$ of $A|_{\overline K}$, $f|_{\overline K}$, and $\gamma|_{\overline K}$, so that $A_K(x)$, $f_K(x)$, and $\gamma_K(x)$ are well-defined at every $x \in \overline K$. Note that we allow $A_{K_1}(x) \neq A_{K_2}(x)$ at $x \in \partial K_1 \cap \partial K_2$, $K_1 \neq K_2$, and likewise for $f$ and $\gamma$; this is a natural way to handle data with jumps at cell interfaces.

\begin{definition}[Cordes condition everywhere]
    Let $0 \leq \mu < 1$. We say that $A$ satisfies the $\mu$-Cordes condition \emph{everywhere} if $A_K(x)$ satisfies the $\mu$-Cordes condition for \emph{every} $K \in \cT_h$ and $x \in \overline K$. We say that $\gamma$ is an \emph{everywhere} $\mu$-admissible scaling of $A$ if $\gamma_K(x) > 0$ and $|\gamma_K(x) A_K(x) - I_d| \leq \mu$ for \emph{every} $K \in \cT_h$ and $x \in \overline K$.
\end{definition}

Let us now describe the family of quadrature rules that we allow in our analysis.

\begin{assumptions}[quadrature]
    On any cell $K \in \cT_h$, we are given a finite set of quadrature points $X_K \subset \overline K$ and corresponding nonnegative quadrature weights $(\omega_K(x))_{x \in X_K} \subset \RR_+$. For $\phi \colon X_K \to \RR$, the quadrature $Q_K[\phi]$ is defined as $Q_K[\phi] := \sum_{x \in X_K} \omega_K(x) \phi(x)$, and satisfies $Q_K[\phi] = \int_K \phi\, d x$ whenever $\phi \in \PP_{2 m - 4}(\overline K)$.
\end{assumptions}

For any $K \in \cT_h$ and $u$, $v \in V_{h, K}$, we define
\begin{align*}
    a_{h, K}^Q(u, v) &:= Q_K[(\gamma_K A_K : \Pi_{m-2}^0 \nabla^2 u) \Pi_{m-2}^0 \Delta v] + s_{h, K}(u - \Pi_m^* u, v - \Pi_m^* v), \\
    L_{h, K}^Q(v) &:= Q_K [\gamma_K f_K \Pi_{m-2}^0 \Delta v].
\end{align*}
We then define the global counterparts $a_h^Q$ and $L_h^Q$ of $a_{h, K}^Q$ and $L_{h, K}^Q$ by summing over all $K \in \cT_h$ as usual, and we consider the following scheme with quadrature: find $u_h \in V_h^{g_I}$ such that
\begin{equation}
    \label{eq:quadr_scheme}
    a_h^Q(u_h, v) = L_h^Q(v), \quad \forall v \in V_h^0.
\end{equation}

The two following results are easily proved by using the Cauchy-Schwarz inequality $Q_K[\phi \psi]^2 \leq Q_K[\phi^2] Q_K[\psi^2]$, the exactness of quadrature for polynomials of degree $2 m - 4$, and arguing as in \cref{prop:discr_continuity_cell,prop:discr_coercivity}.

\begin{proposition}[continuity, with quadrature]
    \label{prop:quadr_continuity}
    For any $K \in \cT_h$ and any $u$, $v \in V_{h, K}$, one has
    \begin{align*}
        a_{h, K}^Q(u, v) &\lesssim \left(\max_{x \in X_K} |\gamma_K(x) A_K(x)| + c^*\right) |u|_{2, K} |v|_{2, K}, \\
        L_{h, K}^Q(v) &\lesssim Q_K[|\gamma_K f_K|^2]^{1/2} |v|_{2, K},
    \end{align*}
    where $c^*$ is from \cref{eq:stab_scaling}.
\end{proposition}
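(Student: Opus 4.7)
The plan is to mirror the proof of \cref{prop:discr_continuity_cell} but with the exact integrals replaced by the quadrature $Q_K$. The two ingredients are the discrete Cauchy–Schwarz inequality $Q_K[\phi\psi]^2\le Q_K[\phi^2]\,Q_K[\psi^2]$ (valid because the weights $\omega_K(x)$ are nonnegative) and the exactness of $Q_K$ on $\PP_{2m-4}(\overline K)$. The latter is exactly the right threshold: the quantities we need to control are $Q_K[|\Pi_{m-2}^0\nabla^2 u|^2]$ and $Q_K[|\Pi_{m-2}^0\Delta v|^2]$, whose integrands have degree $\le 2(m-2)=2m-4$, so the quadrature recovers the $L^2$-norm exactly.

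For the bilinear estimate, I would split $a_{h,K}^Q(u,v)$ into its quadrature and stabilization contributions. The stabilization term is handled by the symmetric Cauchy--Schwarz inequality applied to $s_{h,K}$ together with the upper bound in \cref{eq:stab_scaling}: $s_{h,K}(u-\Pi_m^* u, v-\Pi_m^* v) \le c^* |u|_{2,K}|v|_{2,K}$. For the quadrature term, I would first use the pointwise Frobenius inequality $|\gamma_K A_K : \Pi_{m-2}^0\nabla^2 u|\le |\gamma_K A_K|\,|\Pi_{m-2}^0\nabla^2 u|$ and pull out $\max_{x\in X_K}|\gamma_K(x) A_K(x)|$, then apply the discrete Cauchy--Schwarz inequality to $Q_K$ to decouple the factors depending on $u$ and $v$. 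By polynomial exactness,
\begin{equation*}
    Q_K[|\Pi_{m-2}^0\nabla^2 u|^2] = \|\Pi_{m-2}^0\nabla^2 u\|_{0,K}^2 \le \|\nabla^2 u\|_{0,K}^2 = |u|_{2,K}^2,
\end{equation*}
and likewise for $v$ with $\Delta v$ in place of $\nabla^2 u$ (using $\|\Delta v\|_{0,K}\le\sqrt{d}\,|v|_{2,K}$). Summing the two contributions yields the claimed bound.

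The linear estimate is strictly simpler. Apply the discrete Cauchy--Schwarz inequality to $Q_K[\gamma_K f_K\,\Pi_{m-2}^0\Delta v]$ to obtain $Q_K[|\gamma_K f_K|^2]^{1/2}\,Q_K[|\Pi_{m-2}^0\Delta v|^2]^{1/2}$, and then use polynomial exactness together with the $L^2$-contractivity of $\Pi_{m-2}^0$ and $\|\Delta v\|_{0,K}\le\sqrt{d}\,|v|_{2,K}$ to bound the second factor by a constant times $|v|_{2,K}$.

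I do not anticipate any real obstacle; the only thing one must verify is the degree count $2(m-2)=2m-4$ that matches the exactness assumption. The assumption that $m\ge 2$ ensures this degree is nonnegative so that the quadrature hypothesis is meaningful. No regularity of $A$, $f$, or $\gamma$ beyond pointwise evaluability on $\overline K$ (already guaranteed by the choice of Sobolev representatives $A_K,f_K,\gamma_K$) enters, so the proof is essentially a one-liner once the two ingredients above are in place, which matches the terse justification given in the paper.
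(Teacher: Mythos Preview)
Your proposal is correct and matches the paper's own approach exactly: the paper says the result is ``easily proved by using the Cauchy-Schwarz inequality $Q_K[\phi \psi]^2 \leq Q_K[\phi^2] Q_K[\psi^2]$, the exactness of quadrature for polynomials of degree $2m-4$, and arguing as in \cref{prop:discr_continuity_cell},'' which is precisely what you do. One minor simplification: the stabilization bound $s_{h,K}(u-\Pi_m^* u, v-\Pi_m^* v)\le c^*|u|_{2,K}|v|_{2,K}$ is assumed directly in \cref{eq:stab_scaling}, so no separate Cauchy--Schwarz step is needed there.
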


\begin{proposition}[coercivity, with quadrature]
    Assume that $A$ satisfies the $\mu$-Cordes condition everywhere for some $0 \leq \mu < 1$ and that $\gamma$ is an everywhere $\mu$-admissible scaling of $A$. Then for any $u \in V_h^0$, one has $a_h^Q(u, u) \geq (c_* - \mu) |u|_2^2$, where $c_*$ is from \cref{eq:stab_scaling}.
\end{proposition}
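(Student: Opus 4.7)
The plan is to mirror the proof of \cref{prop:discr_coercivity}, substituting exact integration with quadrature and exploiting the exactness of $Q_K$ on $\PP_{2m-4}$. Fix $u \in V_h^0$. I would write, cell by cell,
\begin{equation*}
    Q_K[(\gamma_K A_K : \Pi_{m-2}^0 \nabla^2 u) \Pi_{m-2}^0 \Delta u]
    = Q_K[(\Pi_{m-2}^0 \Delta u)^2] + Q_K[((\gamma_K A_K - I_d) : \Pi_{m-2}^0 \nabla^2 u) \Pi_{m-2}^0 \Delta u],
\end{equation*}
using $I_d : \Pi_{m-2}^0 \nabla^2 u = \Pi_{m-2}^0 \Delta u$ pointwise.

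Next, since $(\Pi_{m-2}^0 \Delta u)^2 \in \PP_{2m-4}(K)$, exactness of $Q_K$ gives $Q_K[(\Pi_{m-2}^0 \Delta u)^2] = \|\Pi_{m-2}^0 \Delta u\|_{0, K}^2$; likewise $Q_K[|\Pi_{m-2}^0 \nabla^2 u|^2] = \|\Pi_{m-2}^0 \nabla^2 u\|_{0, K}^2$. For the perturbation term, the Cauchy--Schwarz inequality for $Q_K$, followed by the everywhere bound $|\gamma_K(x) A_K(x) - I_d| \leq \mu$ pulled out under the quadrature sum, yields
\begin{equation*}
    Q_K[((\gamma_K A_K - I_d) : \Pi_{m-2}^0 \nabla^2 u) \Pi_{m-2}^0 \Delta u] \geq -\mu \|\Pi_{m-2}^0 \nabla^2 u\|_{0, K} \|\Pi_{m-2}^0 \Delta u\|_{0, K} \geq -\mu \|\nabla^2 u\|_{0, K} \|\Delta u\|_{0, K},
\end{equation*}
the last step using that $L^2$ projection is norm-reducing.

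The stabilization contribution is treated exactly as in \cref{prop:discr_coercivity}: since $\Delta \Pi_m^* u \in \PP_{m-2}(K)$ and $\Pi_{m-2}^0 \Delta u$ is the $L^2$-best approximation,
\begin{equation*}
    s_{h, K}(u - \Pi_m^* u, u - \Pi_m^* u) \geq c_* \|\Delta u - \Delta \Pi_m^* u\|_{0, K}^2 \geq c_* \|\Delta u - \Pi_{m-2}^0 \Delta u\|_{0, K}^2.
\end{equation*}
Summing over $K \in \cT_h$ and applying the Miranda--Talenti estimate (\cref{thm:miranda_talenti}) to absorb $\|\nabla^2 u\|_0 \|\Delta u\|_0 \leq \|\Delta u\|_0^2$ gives
\begin{equation*}
    a_h^Q(u, u) \geq \|\Pi_{m-2}^0 \Delta u\|_0^2 - \mu \|\Delta u\|_0^2 + c_* \|\Delta u - \Pi_{m-2}^0 \Delta u\|_0^2 \geq (c_* - \mu) \|\Delta u\|_0^2 \geq (c_* - \mu) |u|_2^2,
\end{equation*}
invoking Miranda--Talenti once more.

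I do not anticipate a genuine obstacle: the one thing to notice is that the perturbation term is no longer a true $L^2$ inner product but only a quadrature, so the standard $L^\infty$-times-$L^2$-times-$L^2$ bound is replaced by the quadrature Cauchy--Schwarz inequality, which is precisely what the exactness of $Q_K$ on $\PP_{2m-4}$ allows us to convert back into genuine $L^2$ norms of the projected quantities. The \emph{everywhere} Cordes assumption is exactly what is needed to pointwise bound $|\gamma_K(x) A_K(x) - I_d| \leq \mu$ at the quadrature nodes, replacing the almost-everywhere bound used before.
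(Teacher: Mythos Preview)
Your proposal is correct and follows exactly the approach the paper indicates: the paper's proof is just the remark that the result follows ``by using the Cauchy--Schwarz inequality $Q_K[\phi \psi]^2 \leq Q_K[\phi^2] Q_K[\psi^2]$, the exactness of quadrature for polynomials of degree $2m-4$, and arguing as in \cref{prop:discr_coercivity}'', and you have spelled out precisely these steps, including the key observation that the everywhere Cordes condition is what allows the pointwise bound $|\gamma_K(x)A_K(x)-I_d|\le\mu$ at the quadrature nodes.
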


The well-posedness of the scheme \cref{eq:quadr_scheme} can then be established using the Lax-Milgram lemma similarly to \cref{coro:discr_well_posedness}. Next, we derive an error estimate. We start with the following lemma.

\begin{lemma}
    \label{lemma:quadr_partial_error_estimate}
    Assume that $A$ satisfies the $\mu$-Cordes condition everywhere for some $0 \leq \mu < 1$, that $\gamma$ is an everywhere $\mu$-admissible scaling of $A$, and that $c_* > \mu$ in \cref{eq:stab_scaling}. Let $u_h \in V_h^{g_I}$ be the unique solution to \cref{eq:quadr_scheme}. Then, for any $u \in H^2(\Omega)$ and $u_I \in V_h^{g_I}$,
    \begin{equation}
        \label{eq:quadr_partial_error_estimate}
        \begin{split}
            \|u - u_h\|_{2, h}
            &\lesssim \frac{c^*}{c_* - \mu} (\|u - u_I\|_2 + |u - \Pi_m^0 u|_{2, h}) \\
            &\quad + \frac{1}{c_* - \mu} \left(\sum_{K \in \cT_h} Q_K[|\gamma_K (f_K - A_K : \nabla^2 \Pi_m^0 u)|^2]\right)^{1/2}.
        \end{split}
    \end{equation}
\end{lemma}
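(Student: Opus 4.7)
The plan is to mimic the proof of \cref{thm:error_estimate}, with the consistency step (which previously exploited the pointwise equality $f=A:\nabla^2 u$) replaced by a direct estimate that gives rise to the extra quadrature-based consistency term on the right-hand side of \cref{eq:quadr_partial_error_estimate}.

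First, I would set $\delta_h := u_I - u_h \in V_h^0$ and, by the triangle inequality together with \cref{rem:poincare}, reduce the estimate to a bound on $|\delta_h|_2$. By the quadrature-based coercivity and the fact that $u_h$ solves \cref{eq:quadr_scheme},
\begin{equation*}
    (c_* - \mu)|\delta_h|_2^2 \leq a_h^Q(\delta_h,\delta_h) = a_h^Q(u_I,\delta_h) - L_h^Q(\delta_h) = \sum_{K \in \cT_h} \bigl(a_{h,K}^Q(u_I - \Pi_m^0 u, \delta_h) + a_{h,K}^Q(\Pi_m^0 u, \delta_h) - L_{h,K}^Q(\delta_h)\bigr).
\end{equation*}
The first sum is controlled as in the proof of \cref{thm:error_estimate}: by \cref{prop:quadr_continuity}, the everywhere version of \cref{prop:cordes_scaled_bound} (so $\max_{x \in X_K}|\gamma_K(x) A_K(x)| \leq 1+\sqrt{d} \lesssim c^*$), and the triangle inequality,
\begin{equation*}
    a_{h,K}^Q(u_I - \Pi_m^0 u, \delta_h) \lesssim c^*\,(|u-u_I|_{2,K} + |u-\Pi_m^0 u|_{2,K})\,|\delta_h|_{2,K}.
\end{equation*}

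The main issue is the second sum, where the previous proof used $A:\nabla^2 u = f$ almost everywhere; this no longer applies directly under quadrature since we must stay at the quadrature nodes. Here I would observe that $\Pi_m^* \Pi_m^0 u = \Pi_m^0 u$ (because $\Pi_m^0 u \in \PP_m(K) \subset V_{h,K}$ and $\Pi_m^*$ is a projection onto $\PP_m(K)$), so the stabilization contribution in $a_{h,K}^Q(\Pi_m^0 u, \delta_h)$ vanishes. Also, since $\nabla^2 \Pi_m^0 u \in [\PP_{m-2}(K)]^{d\times d}$, the projection $\Pi_{m-2}^0$ acts as the identity on it. We therefore get
\begin{equation*}
    a_{h,K}^Q(\Pi_m^0 u,\delta_h) - L_{h,K}^Q(\delta_h) = Q_K\bigl[\gamma_K (A_K:\nabla^2 \Pi_m^0 u - f_K)\,\Pi_{m-2}^0 \Delta \delta_h\bigr].
\end{equation*}
Applying the Cauchy--Schwarz inequality for the quadrature rule yields
\begin{equation*}
    a_{h,K}^Q(\Pi_m^0 u,\delta_h) - L_{h,K}^Q(\delta_h) \leq Q_K\bigl[|\gamma_K(f_K - A_K:\nabla^2 \Pi_m^0 u)|^2\bigr]^{1/2}\,Q_K\bigl[|\Pi_{m-2}^0 \Delta \delta_h|^2\bigr]^{1/2}.
\end{equation*}
Since $|\Pi_{m-2}^0 \Delta \delta_h|^2 \in \PP_{2m-4}(K)$, the quadrature is exact on this integrand, so $Q_K[|\Pi_{m-2}^0 \Delta \delta_h|^2] = \|\Pi_{m-2}^0 \Delta \delta_h\|_{0,K}^2 \leq \|\Delta \delta_h\|_{0,K}^2 \lesssim |\delta_h|_{2,K}^2$.

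Summing the two contributions over $K \in \cT_h$, applying the discrete Cauchy--Schwarz inequality to pull out the common factor $|\delta_h|_2$, and dividing through by $(c_*-\mu)|\delta_h|_2$ produces \cref{eq:quadr_partial_error_estimate}. The only genuinely delicate point is the new consistency argument above: the key insight is that the integrand $|\Pi_{m-2}^0\Delta\delta_h|^2$ is a polynomial of degree precisely $2m-4$, so the quadrature's exactness assumption is sharp here and is what makes the derivation go through without further regularity on the data.
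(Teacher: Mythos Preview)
Your proposal is correct and follows essentially the same route as the paper's proof: the same decomposition via $\delta_h = u_I - u_h$, the same use of quadrature coercivity and continuity for the $u_I - \Pi_m^0 u$ piece, and the same Cauchy--Schwarz-for-quadrature step combined with exactness on $\PP_{2m-4}$ to control $Q_K[|\Pi_{m-2}^0\Delta\delta_h|^2]$. If anything, your write-up is slightly more explicit than the paper's about why the stabilization term vanishes and why the quadrature is exact on $|\Pi_{m-2}^0\Delta\delta_h|^2$.
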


\begin{proof}
    Let $\delta_h := u_I - u_h$. Arguing as in \cref{thm:error_estimate},
    \begin{align*}
        (c_* - \mu) |\delta_h|_2^2
        &\leq \sum_{K \in \cT_h} a_{h, K}^Q(u_I - \Pi_m^0 u, \delta_h) + \sum_{K \in \cT_h} (a_{h, K}^Q(\Pi_m^0 u, \delta_h) - L_{h, K}^Q(\delta_h)).
    \end{align*}
    For any $K \in \cT_h$, using \cref{prop:quadr_continuity} and the triangle inequality, and arguing as in \cref{prop:cordes_scaled_bound} to control $\gamma_K A_K$, one has
    \begin{equation*}
        a_{h, K}^Q(u_I - \Pi_m^0 u, \delta_h) \lesssim c^* (|u - u_I|_{2, K} + |u - \Pi_m^0 u|_{2, K}) |\delta_h|_{2, K}.
    \end{equation*}
    Finally, we estimate the remaining term
    \begin{align*}
        a_{h, K}^Q(\Pi_m^0 u, \delta_h) - L_{h, K}^Q(\delta_h)
        &= Q_K [(\gamma_K A_K : \nabla^2 \Pi_m^0 u - \gamma_K f_K) \Pi_{m-2}^0 \Delta \delta_h] \\
        &\leq Q_K[|\gamma_K (f_K - A_K : \nabla^2 \Pi_m^0 u)| |\Pi_{m-2}^0 \Delta \delta_h|] \\
        &\leq Q_K[|\gamma_K (f_K - A_K : \nabla^2 \Pi_m^0 u)|^2]^{1/2} Q_K[|\Pi_{m-2}^0 \Delta \delta_h|^2]^{1/2} \\
        &\lesssim Q_K[|\gamma_K (f_K - A_K : \nabla^2 \Pi_m^0 u)|^2]^{1/2} |\delta_h|_{2, K},
    \end{align*}
    from which we conclude as in \cref{thm:error_estimate}.
\end{proof}

Note that the above lemma holds for any $u \in H^2(\Omega)$. We now want to choose $u$ as the solution to \cref{eq:var} and use the equality $f_K = A_K : \nabla^2 u$ almost everywhere in order to estimate the rightmost term in \cref{eq:quadr_partial_error_estimate}. The difficulty is that this equality is not guaranteed to hold at quadrature points. A sufficient condition for it to hold is that $A_K$, $f_K$, and $\nabla^2 u$ are continuous at quadrature points. This is the setting of our next result.

\begin{theorem}[error estimate with quadrature, continuous setting]
    \label{thm:quadr_error_estimate_continuous}
    Assume that $A$ satisfies the $\mu$-Cordes condition everywhere for some $0 \leq \mu < 1$, that $\gamma$ is an everywhere $\mu$-admissible scaling of $A$, and that $c_* > \mu$ in \cref{eq:stab_scaling}. Let $u \in V^g$ and $u_h \in V_h^{g_I}$ be respectively the unique solutions to \cref{eq:var} and \cref{eq:quadr_scheme}. Assume that, for any $K \in \cT_h$ and $x \in X_K$, $A_K$, $f_K$, and $\nabla^2 u$ are continuous at $x$. Then, for any $u_I \in V_h^{g_I}$,
    \begin{equation}
        \label{eq:quadr_error_estimate_continuous}
        \begin{split}
            \|u - u_h\|_{2, h}
            &\lesssim \frac{c^*}{c_* - \mu} (\|u - u_I\|_2 + |u - \Pi_m^0 u|_{2, h}) \\
            &\quad + \frac{1}{c_* - \mu} \left(\sum_{K \in \cT_h} Q_K[|\nabla^2 (u - \Pi_m^0 u)|^2]\right)^{1/2}.
        \end{split}
    \end{equation}
\end{theorem}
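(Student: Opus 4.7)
The plan is to start from the partial error estimate of \cref{lemma:quadr_partial_error_estimate} and show that, under the stated continuity hypotheses, the quadrature residual term
\[
    \left(\sum_{K \in \cT_h} Q_K[|\gamma_K (f_K - A_K : \nabla^2 \Pi_m^0 u)|^2]\right)^{1/2}
\]
can be reshaped into a quantity controlled by the polynomial projection error $|u - \Pi_m^0 u|_{2, h}$ evaluated at quadrature points. The essential observation is that, because $u$ solves \cref{eq:main}, the identity $A_K(x) : \nabla^2 u(x) = f_K(x)$ holds at every quadrature point $x \in X_K$, and then we can replace $f_K$ by $A_K : \nabla^2 u$ inside the quadrature and bound $|\gamma_K A_K|$ pointwise by $1 + \sqrt{d}$ exactly as in \cref{prop:cordes_scaled_bound}.

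First I would handle the pointwise consistency at quadrature points. By \cref{prop:consistency}, the equality $A : \nabla^2 u = f$ holds almost everywhere in $\Omega$, so for each $K \in \cT_h$ the set $E_K \subset K$ on which $A_K : \nabla^2 u = f_K$ fails has measure zero and hence $\overline K \setminus E_K$ is dense in $\overline K$. Given $x \in X_K$, pick a sequence $(x_n) \subset \overline K \setminus E_K$ with $x_n \to x$; using that $A_K$, $f_K$, and $\nabla^2 u$ are continuous at $x$, pass to the limit in $A_K(x_n) : \nabla^2 u(x_n) = f_K(x_n)$ to obtain $f_K(x) = A_K(x) : \nabla^2 u(x)$. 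Consequently, at every $x \in X_K$,
\[
    \gamma_K(x) (f_K(x) - A_K(x) : \nabla^2 \Pi_m^0 u(x)) = \gamma_K(x) A_K(x) : \nabla^2 (u - \Pi_m^0 u)(x).
\]

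Next I would use the Cauchy–Schwarz inequality on the Frobenius inner product together with the everywhere bound $|\gamma_K(x) A_K(x)| \leq \mu + \sqrt{d} < 1+\sqrt{d}$, which follows by the argument of \cref{prop:cordes_scaled_bound} applied pointwise at $x \in \overline K$ under the everywhere Cordes assumption on $\gamma_K A_K$. This gives
\[
    |\gamma_K(x) (f_K(x) - A_K(x) : \nabla^2 \Pi_m^0 u(x))|^2 \lesssim |\nabla^2 (u - \Pi_m^0 u)(x)|^2
\]
at each quadrature point, and summing with the nonnegative weights yields
\[
    Q_K[|\gamma_K (f_K - A_K : \nabla^2 \Pi_m^0 u)|^2] \lesssim Q_K[|\nabla^2 (u - \Pi_m^0 u)|^2].
\]

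Finally I would insert this bound, summed over $K \in \cT_h$, into \cref{eq:quadr_partial_error_estimate} to conclude \cref{eq:quadr_error_estimate_continuous}. The only genuinely delicate step is the pointwise identification at quadrature points in the first paragraph; once that is established, everything else is a routine application of Cauchy–Schwarz and the uniform bound on $\gamma_K A_K$. It is worth noting that the hypothesis of continuity of $\nabla^2 u$ at quadrature points is a regularity assumption on the \emph{solution} (stronger than the generic $u \in H^2$), and so this estimate is really meant to apply when the data, and hence the solution, are smooth near the quadrature nodes; this is the natural setting in which a quadrature rule interacts well with $f$ and $A$.
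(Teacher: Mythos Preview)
Your proposal is correct and follows essentially the same approach as the paper: invoke \cref{lemma:quadr_partial_error_estimate}, use the pointwise identity $f_K(x) = A_K(x) : \nabla^2 u(x)$ at quadrature points to rewrite the residual term, and then bound $|\gamma_K A_K|$ pointwise as in \cref{prop:cordes_scaled_bound}. Your density-plus-continuity argument for establishing the pointwise identity at quadrature points is more explicit than the paper's one-line assertion, but the overall structure is identical.
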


\begin{proof}
    \Cref{lemma:quadr_partial_error_estimate} applies, so we only need to estimate the rightmost term in \cref{eq:quadr_partial_error_estimate}. Since the equality $f_K = A_K : \nabla^2 u$ holds at every quadrature point, one has, for any $K \in \cT_h$,
    \begin{align*}
        Q_K[|\gamma_K (f_K - A_K : \nabla^2 \Pi_m^0 u)|^2]
        &= Q_K[|\gamma_K A_K : \nabla^2 (u - \Pi_m^0 u)|^2] \\
        &\lesssim Q_K[|\nabla^2 (u - \Pi_m^0 u)|^2],
    \end{align*}
    where we argued as in \cref{prop:cordes_scaled_bound} to control $\gamma_K A_K$.
\end{proof}

It may happen that $A$ or $f$ are discontinuous at some quadrature points (see for instance our numerical experiments on odd square and cubic meshes in \cref{subsec:numerics_roughdata_roughsol,subsec:numerics_roughdata_smoothsol}). In this case, we can still obtain some error estimate provided that the data and the solution $u$ to \cref{eq:var} satisfy the following property:
\begin{equation}
    \label{eq:quadr_set_assumption}
    \begin{split}
        &\text{\textit{there exists an open set $E \subset \Omega$ such that $u \in W^{2, \infty}(E)$ and, for}} \\
        &\text{\textit{any quadrature point $x \in X_K$, $K \in \cT_h$, one has $x \in \overline {E \cap K}$, and}} \\
        &\text{\textit{$(A_K)|_{\{x\} \cup (E \cap K)}$ and $(f_K)|_{\{x\} \cup (E \cap K)}$ are continuous at $x$}}.
    \end{split}
\end{equation}
The motivation for introducing the open set $E$ is to only assume some partial continuity of $A$ and $f$ at quadrature points. The condition $u \in W^{2, \infty}(E)$ is the weaker condition on $u$ that we were able to exploit in our analysis; while it holds for instance in all the cases we considered in our numerical experiments, we are unfortunately not aware of a general sufficient condition on the data that would guarantee this regularity of the solution.

\begin{theorem}[error estimate with quadrature, discontinuous setting]
    \label{thm:quadr_error_estimate_discontinuous}
    Assume that $A$ satisfies the $\mu$-Cordes condition everywhere for some $0 \leq \mu < 1$, that $\gamma$ is an everywhere $\mu$-admissible scaling of $A$, and that $c_* > \mu$ in \cref{eq:stab_scaling}. Let $u \in V^g$ and $u_h \in V_h^{g_I}$ be respectively the unique solutions to \cref{eq:var} and \cref{eq:quadr_scheme}. If there exists an open set $E \subset \Omega$ satisfying \cref{eq:quadr_set_assumption}, then, for any $u_I \in V_h^{g_I}$,
    \begin{equation}
        \label{eq:quadr_error_estimate_discontinuous}
        \begin{split}
            \|u - u_h\|_{2, h}
            &\lesssim \frac{c^*}{c_* - \mu} (\|u - u_I\|_2 + |u - \Pi_m^0 u|_{2, h}) \\
            &\quad + \frac{1}{c_* - \mu} \left(\sum_{K \in \cT_h} |K|\, |u - \Pi_m^0 u|_{2, \infty, E \cap K}^2\right)^{1/2}.
        \end{split}
    \end{equation}
\end{theorem}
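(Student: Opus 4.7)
The plan is to invoke \cref{lemma:quadr_partial_error_estimate} and reduce the proof to estimating the residual quadrature term $\sum_{K \in \cT_h} Q_K[|\gamma_K (f_K - A_K : \nabla^2 \Pi_m^0 u)|^2]$. The strategy mirrors that of \cref{thm:quadr_error_estimate_continuous}, where $f_K = A_K : \nabla^2 u$ was used directly at each quadrature point; here the identity is only guaranteed almost everywhere on $E \cap K$, so it must be transported to the quadrature point by a limiting argument exploiting \cref{eq:quadr_set_assumption}.

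Concretely, fix $K \in \cT_h$ and a quadrature point $x \in X_K$. Since $x \in \overline{E \cap K}$, $E$ is open, and $u \in W^{2, \infty}(E)$, one can select a sequence $y_n \in E \cap K$ with $y_n \to x$ that avoids the measure-zero exceptional set on which any of the following fails: $\nabla^2 u(y_n)$ is defined, the strong identity $A_K(y_n) : \nabla^2 u(y_n) = f_K(y_n)$ holds, and $|\nabla^2 (u - \Pi_m^0 u)(y_n)| \leq |u - \Pi_m^0 u|_{2, \infty, E \cap K}$. Writing
\begin{align*}
    f_K(x) - A_K(x) : \nabla^2 \Pi_m^0 u(x)
    &= \bigl(f_K(x) - f_K(y_n)\bigr) - \bigl(A_K(x) - A_K(y_n)\bigr) : \nabla^2 \Pi_m^0 u(x) \\
    &\quad - A_K(y_n) : \bigl(\nabla^2 \Pi_m^0 u(x) - \nabla^2 \Pi_m^0 u(y_n)\bigr) \\
    &\quad + A_K(y_n) : \nabla^2 (u - \Pi_m^0 u)(y_n),
\end{align*}
the first three terms vanish as $n \to \infty$ by the partial continuity of $A_K$ and $f_K$ at $x$ postulated by \cref{eq:quadr_set_assumption} and by the continuity of the polynomial $\nabla^2 \Pi_m^0 u$, while the last term is bounded by $|A_K(y_n)| \, |u - \Pi_m^0 u|_{2, \infty, E \cap K}$. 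Passing to the limit, multiplying by $\gamma_K(x)$, and arguing pointwise as in \cref{prop:cordes_scaled_bound} (valid since $\gamma$ is an \emph{everywhere} $\mu$-admissible scaling) to control $|\gamma_K(x) A_K(x)|$ yields
\begin{equation*}
    |\gamma_K(x) (f_K(x) - A_K(x) : \nabla^2 \Pi_m^0 u(x))| \lesssim |u - \Pi_m^0 u|_{2, \infty, E \cap K}.
\end{equation*}

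To finish, I observe that $1 \in \PP_{2 m - 4}(\overline K)$ since $m \geq 2$, so quadrature is exact on constants and $Q_K[1] = |K|$. Squaring the pointwise bound, applying $Q_K$, summing over $K \in \cT_h$, and substituting into \cref{eq:quadr_partial_error_estimate} reproduces \cref{eq:quadr_error_estimate_discontinuous}.

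The main obstacle is the limiting argument: one has to justify that $x$ can indeed be approached from within the ``good'' points of $E \cap K$, even though $x$ itself may lie on $\partial E$ and the exceptional set has measure zero. This is guaranteed by the openness of $E$, which ensures that $E \cap K$ carries positive Lebesgue measure in every neighborhood of any point of its closure, leaving ample room to select the approximating sequence. Everything else is a straightforward adaptation of \cref{thm:quadr_error_estimate_continuous}, with continuity of the data at the quadrature point replaced by the partial continuity enforced by \cref{eq:quadr_set_assumption} and the $W^{2, \infty}(E)$ regularity of $u$.
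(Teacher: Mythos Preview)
Your proof is correct and follows essentially the same route as the paper: invoke \cref{lemma:quadr_partial_error_estimate}, then for each quadrature point pick an approximating sequence in $E\cap K$ on which the strong equation holds, bound the residual by $|A_K|\,|u-\Pi_m^0 u|_{2,\infty,E\cap K}$, pass to the limit using the partial continuity of $A_K,f_K$, and finish with $Q_K[1]=|K|$. The only cosmetic difference is that the paper estimates $|f_K(x_n)-A_K(x_n):\nabla^2\Pi_m^0 u(x_n)|$ at the approximating point and then lets $n\to\infty$ in both sides, whereas you write an explicit four-term decomposition of the residual at $x$ itself; the two organizations are equivalent, and your added remark that $E\cap K$ is open (hence has positive measure near any point of its closure, allowing avoidance of null sets) makes explicit a step the paper leaves implicit.
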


\begin{proof}
    \Cref{lemma:quadr_partial_error_estimate} applies, so we only need to estimate the rightmost term in \cref{eq:quadr_partial_error_estimate}. By \cref{prop:consistency}, $A : \nabla^2 u = f$ almost everywhere in $\Omega$, and thus also in $E$. Let $K \in \cT_h$, $x \in X_K$, and let $(x_n)_{n \geq 0} \subset E \cap K$ be a sequence of points converging to $x$ and such that $A_K(x_n) : \nabla^2 u(x_n) = f_K(x_n)$ and $|\nabla^2 u(x_n) - \nabla^2 \Pi_m^0 u(x_n)| \leq |u - \Pi_m^0 u|_{2, \infty, E \cap K}$ for every $n \in \NN$. Then
    \begin{align*}
        |f_K(x_n) - A_K(x_n) : \nabla^2 \Pi_m^0 u(x_n)|
        &= |A_K(x_n) : (\nabla^2 u(x_n) - \nabla^2 \Pi_m^0 u(x_n))| \\
        &\leq |A_K(x_n)|\, |u - \Pi_m^0 u|_{2, \infty, E \cap K}.
    \end{align*}
    By the assumption \cref{eq:quadr_set_assumption}, $\lim_{n \to \infty} A_K(x_n) = f_K(x)$ and $\lim_{n \to \infty} f_K(x_n) = f_K(x)$, so
    \begin{align*}
        \gamma_K(x) |f_K(x) - A_K(x) : \nabla^2 \Pi_m^0 u(x)|
        &\leq \gamma_K(x) |A_K(x)|\, |u - \Pi_m^0 u|_{2, \infty, E \cap K} \\
        &\lesssim |u - \Pi_m^0 u|_{2, \infty, E \cap K},
    \end{align*}
    where we argued as in \cref{prop:cordes_scaled_bound} to control $|\gamma_K(x) A_K(x)|$. We deduce that, on any $K \in \cT_h$,
    \begin{align*}
        Q_K[|\gamma_K (f_K(x) - A_K(x) : \nabla^2 \Pi_m^0 u(x))|^2]
        &\leq |u - \Pi_m^0 u|_{2, \infty, E \cap K}^2\, Q_K[1] \\
        &= |K|\, |u - \Pi_m^0 u|_{2, \infty, E \cap K}^2,
    \end{align*}
    which concludes the proof.
\end{proof}

We refer to \cref{sec:h2vem_rates} for convergence rate results based on \cref{thm:quadr_error_estimate_discontinuous}, and \cref{subsec:numerics_roughdata_roughsol,subsec:numerics_roughdata_smoothsol} for examples of situations in which \cref{thm:quadr_error_estimate_discontinuous} applies but \cref{thm:quadr_error_estimate_continuous} does not.

\begin{remark}
    It may seem surprising that the regularity of $A$, $f$, and $\gamma$ is not involved in the error estimates \cref{eq:quadr_error_estimate_continuous,eq:quadr_error_estimate_discontinuous}. This is thanks to the fact that we were able to rely on the strong form of the equation $A : \nabla^2 u = f$ in the analysis, as in \cref{thm:error_estimate} (see \cref{rem:no_data_regularity}). Observe also that the analysis relies on the important assumption that, on each cell $K \in \cT_h$, the same quadrature rule $Q_K$ is used in the definitions of both the bilinear form $a_{h, K}^Q$ and the linear form $L_{h, K}^Q$.
\end{remark}

\section{Realization of the virtual element framework}
\label{sec:h2vem}

In this section, we detail a realization of the $H^2$-conforming virtual element discretization framework introduced in \cref{sec:discr}, and then discuss the properties of associated quasi-interpolation operators.

The development of $H^2$-conforming virtual element methods has been considered in a number of publications. Virtual elements for plate bending problems were proposed in Ref.~\refcite{brezzi2013}. Subsequently, virtual elements for forth order problems in two dimensions have been analyzed in Refs.~\refcite{antonietti2016,antonietti2021,antonietti2020,beirao2014} and extended to three dimensions in Refs.~\refcite{beirao2020,chen2022conforming}; see also Refs.~\refcite{brenner2019,chen2022hessian} for the special case of tetrahedral elements.

A complete family of virtual element spaces for all space dimensions was recently presented in Ref.~\refcite{chen2022conforming}. They use the so-called enhancement technique introduced in Ref.~\refcite{ahmad2013}, and their construction is hierarchical in the space dimension. We review this construction in \cref{subsec:h2vem_notation,subsec:h2vem_2d,subsec:h2vem_3d,subsec:h2vem_proj_stab}, and show, by referring to Ref.~\refcite{chen2022conforming} as appropriate, that it fits in the abstract discretization framework of \cref{sec:discr}. For simplicity, we detail here only the spaces obtained for $d\le 3$ and refer the reader to Ref.~\refcite{chen2022conforming} for the general case.

In \cref{subsec:h2vem_global_interp}, we present and analyze Scott-Zhang type and Lagrange type interpolation operators. A slightly different estimate can be found in Ref.~\cite{chen2022conforming}. However, this cannot be used directly in our context as it was designed for polyharmonic problems with homogeneous boundary conditions. Note also that our Lagrange type interpolation error estimate has a slightly unusual form, due to the fact that we want to be able to apply it in low regularity settings; we refer to \cref{subsubsec:h2vem_lagrange} for more details.

\subsection{Notation}
\label{subsec:h2vem_notation}

We require some extra notation. We shall use the symbols ${\rm v}$ and $e$ to indicate a generic vertex and edge of the partition, respectively, and the symbol $F$ for a face when $d=3$. Further, we assume that a complete set of unit normal vectors is given on every edge $e$ and face $F$, denoted by $\mathbf{n}_{e,i}$, $i=1,d-1$, and $\mathbf{n}_F$, respectively. Similarly, to every vertex $\rv$ we associate a basis of unit vectors $\bn_{\rv,i}$, $i=1,d$, of $\RR^d$. We do not assume orthogonality of the families $\bn_{\rv,i}$ and $\bn_{e,i}$; rather, we assume that for every vertex $\rv$ and $\bn \in \RR^d$, one has $|\bn|^2 \lesssim \sum_{i=1}^d \<\bn_{\rv,i}, \bn\>^2$, and for any edge $e$ and $\bn \in \spann \{\bn_{e,i} \mid i=1,d-1\}$, one has $|\bn|^2 \lesssim \sum_{i=1}^{d-1} \<\bn_{e,i}, \bn\>^2$. These nondegeneracy assumptions will be more convenient than orthogonality in \cref{subsec:h2vem_global_interp}.

For any $K \in \cT_h$, we denote by $\mathcal{V}_K$  the set of its vertices and by $\cE_K$ the set of its edges; further, when $d=3$, we denote by $\cF_K$ the set of faces of $K$. For a smooth enough function $v$ defined on $K$, being it scalar or vector valued, we set
 $\displaystyle\overline{v}:=\frac{1}{|\mathcal{V}_K|}\sum_{{\rm v}\in \mathcal{V}_K}v({\rm v})$.

We let $\cV_h := \bigcup_{K \in \cT_h} \cV_K$, $\cE_h := \bigcup_{K \in \cT_h} \cE_K$, and, if $d=3$, $\cF_h := \bigcup_{K \in \cT_h} \cF_K$. For ${\rm v}\in \mathcal{V}_h$, we let $h_{\rm v}$ be an appropriate local mesh size parameter associated to ${\rm v}$, such as the average diameter of the elements sharing ${\rm v}$ as a vertex.

Given $\omega\subset \RR^d$ and $k\in\NN$, we denote by $\PP_{k}(\omega)$ the space of polynomials of degree $k$ on $\omega$ and  let $\MM_{k}(\omega)$ denote a basis for such space. This notation is extended to $k<0$  by fixing $\PP_{k}(\omega)=\{0\}$ and  $\MM_{k}(\omega)=\{0\}$ in this case. The $L^2$-projection onto $\PP_{k}(\omega)$ will be denoted by $\Pi^{0,\omega}_k$.

The starting point for the sequential construction of virtual element spaces in two- and three-dimensions are one-dimensional spaces used on the edges composing the skeleton of the partition.
These are fixed as standard polynomial spaces as follows:  for a segment $e$ we consider the spaces  $V_{h,e}^0:=\PP_{m-1}(e)$ and $V_{h,e}:=\PP_{3\lor m}(e)$ typically used to construct the classical $C^j$-conforming, $j=0,1$,  finite elements in one dimension, respectively.

\subsection{Local virtual space in two-dimensions}
\label{subsec:h2vem_2d}

Let $d=2$ and $K\in \cT_h$. We first introduce the enlarged virtual element space
\begin{align*}
\widetilde{V}_{h,K}:=\left\{
v\in H^2(K)\, : \, \Delta^2 v\in \PP_m(K)
 \text{ and } v|_e\in V_{h,e},
\partial_{\mathbf{n}}v|_e\in V_{h,e}^0,
\forall e\in \cE_K
\right\}.
\end{align*}

\begin{remark}
    It may be surprising that we impose $\Delta^2 v \in \PP_m(K)$ rather than $\Delta^2 v \in \PP_{m-4}(K)$, but this is standard when using the virtual element enhancement technique. The reason is that imposing $\Delta^2 v \in \PP_{m-4}(K)$ would be too restrictive to be compatible with the additional constraints in equation \cref{eq:vem_local} below. We refer to Ref.~\refcite{ahmad2013} for more details.
\end{remark}

We clearly have $\PP_m(K)\subset \widetilde{V}_{h,K}$. Moreover, for any $v\in \widetilde{V}_{h,K}$, we have that $v$ and $\nabla v$ are continuous on $\partial K$ in consequence of the compatibility conditions implied by $v\in H^2(K)$, \emph{cf.}~Refs.~\refcite{antonietti2020,chen2022conforming,grisvard2011}.

We define the local Hessian projection operator $\Pi_m^{2}:H^2(K)\rightarrow\PP_m(K)$ by
\begin{align}\label{eq:hessproj1}
&\int_K \nabla^2 \Pi_m^{2} v : \nabla^2 q\, dx =\int_K \nabla^2 v : \nabla^2 q\, dx\quad &\forall q\in  \PP_m(K),\\
&
\overline{\nabla^j\Pi_m^{2}  v}=\overline{\nabla^j v}& j=0,1.
\label{eq:hessproj2}
\end{align}
If $v \in \widetilde{V}_{h,K}$, then the definition of $\widetilde{V}_{h,K}$
permits the evaluation of $\Pi_m^{2} v$ directly in function of the following (incomplete for $\widetilde{V}_{h,K}$) set of degrees of freedom.

\begin{definition}[two-dimensional degrees of freedom]\label{def:2Ddof}
On  $K\in \cT_h$, we define the \emph{local degrees of freedom}:
\begin{itemlist}
\item $h_{{\rm v}}^j\nabla^j v({\rm v})$ for all ${\rm v}\in\mathcal{V}_K$, $j=0,1$;
\item  $\displaystyle |e|^{j-1} \int_e \frac{\partial^j v}{\partial \mathbf{n}_{e,i}^j}
q \, d\ell$, for all  $q\in\MM_{m+j -4}(e)$, $j=0,1$, $i=1,d-1$, and $e\in\cE_K$;
\item  $\displaystyle |K|^{-1}\int_K v q\, dx$, for all $q\in\MM_{m-4}(K)$.
\end{itemlist}
\end{definition}

A depiction of the  degrees of freedom corresponding to $m=2,3,4$ is shown in \cref{C1-dof-2D}.

\begin{figure}[ht]
\centering
\setlength{\unitlength}{1.5cm}
\begin{picture}(2,2)
	\thicklines
	\put(0,1){\line(1,2){.4}}
	\put(.4,1.8){\line(1,0){1}}
	\put(1.4,1.8){\line(2,-3){.55}}
	\put(0,1){\line(1,-1){1}}
	\put(1,0){\line(1,1){.96}}
	\put(0,1){\circle*{.1}}
	\put(.4,1.8){\circle*{.1}}
	\put(1.4,1.8){\circle*{.1}}
	\put(1.95,.95){\circle*{.1}}
	\put(1,0.03){\circle*{.1}}
	\put(0,1){{\color{black} \circle{.2}}}
	\put(.4,1.8){{\color{black} \circle{.2}}}
	\put(1.4,1.8){{\color{black} \circle{.2}}}
	\put(1.95,.95){{\color{black} \circle{.2}}}
	\put(1,0.03){{\color{black} \circle{.2}}}
\end{picture}
\hspace{1cm}
\begin{picture}(2,2)
	\thicklines
	\put(0,1){\line(1,2){.4}}
	\put(.4,1.8){\line(1,0){1}}
	\put(1.4,1.8){\line(2,-3){.55}}
	\put(0,1){\line(1,-1){1}}
	\put(1,0){\line(1,1){.96}}
	\put(0,1){\circle*{.1}}
	\put(.4,1.8){\circle*{.1}}
	\put(1.4,1.8){\circle*{.1}}
	\put(1.95,.95){\circle*{.1}}
	\put(1,0.03){\circle*{.1}}
	\put(0,1){{\color{black} \circle{.2}}}
	\put(.4,1.8){{\color{black} \circle{.2}}}
	\put(1.4,1.8){{\color{black} \circle{.2}}}
	\put(1.95,.95){{\color{black} \circle{.2}}}
	\put(1,0.03){{\color{black} \circle{.2}}}
	\put(-.02,1.41){{\color{red}  \rotatebox[origin=c]{-25}{$\leftarrow$}}}
	\put(.83,1.85){{\color{red}  {$\uparrow$}}}
	\put(1.65,1.4){{\color{red}  \rotatebox[origin=c]{30}{$\rightarrow$}}}
	\put(1.44,.36){{\color{red}  \rotatebox[origin=c]{-45}{$\rightarrow$}}}
	\put(.3,.4){{\color{red}  \rotatebox[origin=c]{45}{$\leftarrow$}}}
\end{picture}
\hspace{1cm}
\begin{picture}(2,2)
	\thicklines
	\put(0,1){\line(1,2){.4}}
	\put(.4,1.8){\line(1,0){1}}
	\put(1.4,1.8){\line(2,-3){.55}}
	\put(0,1){\line(1,-1){1}}
	\put(1,0){\line(1,1){.96}}
	\put(0,1){\circle*{.1}}
	\put(.4,1.8){\circle*{.1}}
	\put(1.4,1.8){\circle*{.1}}
	\put(1.95,.95){\circle*{.1}}
	\put(1,0.03){\circle*{.1}}
	\put(0,1){{\color{black} \circle{.2}}}
	\put(.4,1.8){{\color{black} \circle{.2}}}
	\put(1.4,1.8){{\color{black} \circle{.2}}}
	\put(1.95,.95){{\color{black} \circle{.2}}}
	\put(1,0.03){{\color{black} \circle{.2}}}
	\put(.02,1.49){{\color{red}  \rotatebox[origin=c]{-25}{$\leftarrow$}}}
	\put(-.09,1.27){{\color{red}  \rotatebox[origin=c]{-25}{$\leftarrow$}}}
	\put(.73,1.85){{\color{red}  {$\uparrow$}}}
	\put(.95,1.85){{\color{red}  {$\uparrow$}}}
	\put(1.59,1.48){{\color{red}  \rotatebox[origin=c]{30}{$\rightarrow$}}}
	\put(1.72,1.28){{\color{red}  \rotatebox[origin=c]{30}{$\rightarrow$}}}
	\put(1.53,.45){{\color{red}  \rotatebox[origin=c]{-45}{$\rightarrow$}}}
	\put(1.35,.27){{\color{red}  \rotatebox[origin=c]{-45}{$\rightarrow$}}}
	\put(.38,.31){{\color{red}  \rotatebox[origin=c]{45}{$\leftarrow$}}}
	\put(.21,.49){{\color{red}  \rotatebox[origin=c]{45}{$\leftarrow$}}}

	\put(.19,1.4){{\color{red}\circle*{.1}}}
	\put(.9,1.8){{\color{red}\circle*{.1}}}
	\put(1.68,1.38){{\color{red}\circle*{.1}}}
	\put(1.49,.49){{\color{red}\circle*{.1}}}
	\put(.47,.53){{\color{red}\circle*{.1}}}
	\put(.87,.98){{\color{blue}$\blacktriangle$}}
\end{picture}
\caption{Degrees of freedom of the $H^2$-conforming VEM for $m=2,3,4$.}
\label{C1-dof-2D}
\end{figure}

\begin{proposition}[computability]\label{prop:hessian}
For any $v\in\widetilde{V}_{h,K}$, the  Hessian projection $\Pi_m^{2} v$ and $L^2$-projection of the Hessian $\Pi_{m-2}^0 \nabla^2 v$ can be evaluated from the set of degrees of freedom of \cref{def:2Ddof}.
\end{proposition}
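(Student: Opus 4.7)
The plan is to verify, for each projection, that it is well-defined as an element of the appropriate polynomial space and that every quantity appearing in its defining equations can be expressed as a linear combination of the degrees of freedom of \cref{def:2Ddof}. The only technical ingredient is integration by parts, applied twice so as to transfer all derivatives from $v$ onto the polynomial test function. As a preliminary observation, the DOFs determine the edge traces $v|_e$ and $\nabla v|_e$ as polynomials: on each $e \in \cE_K$, $v|_e \in \PP_{3\lor m}(e)$ is fixed by the endpoint values of $v$ and the tangential components of $\nabla v$ (Hermite data) together with the edge moments against $\MM_{m-4}(e)$, while $\partial_\bn v|_e \in \PP_{m-1}(e)$ is fixed by the endpoint normal components of $\nabla v$ and the moments against $\MM_{m-3}(e)$. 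The tangential derivative of $v|_e$ is then obtained by differentiating the reconstructed edge polynomial.

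For $\Pi_m^2 v$, well-posedness follows from $\ker(\nabla^2) \cap \PP_m(K) = \PP_1(K)$ combined with \cref{eq:hessproj2}, which fixes $\Pi_m^2 v$ modulo $\PP_1(K)$ via $\overline{\Pi_m^2 v}$ and $\overline{\nabla \Pi_m^2 v}$; these averages are directly read off the vertex DOFs. Integrating by parts twice in the right-hand side of \cref{eq:hessproj1} yields
\begin{equation*}
\int_K \nabla^2 v : \nabla^2 q\, dx = \int_K v\, \Delta^2 q\, dx - \int_{\partial K} v\, (\nabla \Delta q) \cdot \bn\, d\sigma + \int_{\partial K} \nabla v \cdot (\nabla^2 q\, \bn)\, d\sigma,
\end{equation*}
for any $q \in \PP_m(K)$, where $\bn$ is the outward unit normal. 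The volumetric integral pairs $v$ with $\Delta^2 q \in \PP_{m-4}(K)$ and is thus captured by the interior moment DOFs against $\MM_{m-4}(K)$; on each edge, the two boundary integrals pair the already-reconstructed $v|_e$ and $\nabla v|_e$ with polynomial weights derived from $q$, and are therefore exactly computable.

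For $\Pi_{m-2}^0 \nabla^2 v$, the same strategy applied against an arbitrary matrix polynomial $Q \in [\PP_{m-2}(K)]^{d\times d}$ gives
\begin{equation*}
\int_K \nabla^2 v : Q\, dx = \int_K v\, \ddiv \ddiv Q\, dx - \int_{\partial K} v\, (\ddiv Q) \cdot \bn\, d\sigma + \int_{\partial K} \nabla v \cdot (Q\bn)\, d\sigma,
\end{equation*}
with $\ddiv \ddiv Q \in \PP_{m-4}(K)$ and the edge restrictions of $\ddiv Q$ and $Q\bn$ again polynomial, so all three terms are computable from the interior moment DOFs and the reconstructed edge traces. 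The main delicate point in the whole argument is degree bookkeeping: one must check that after the double integration by parts every residual polynomial weight placed on $v$ in the interior lies in $\PP_{m-4}(K)$, exactly matching the range supplied by the DOFs $|K|^{-1}\int_K v q\, dx$, $q \in \MM_{m-4}(K)$. The edge contributions then reduce to integrals of one polynomial against another on a segment, which are always exactly computable once $v|_e$ and $\nabla v|_e$ have been reconstructed.
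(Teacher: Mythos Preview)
Your proof is correct and follows essentially the same route as the paper: integrate by parts twice to obtain the identity
\[
\int_K \nabla^2 v : \nabla^2 q\, dx = \int_K v\, \Delta^2 q\, dx - \sum_{e\in\cE_K}\int_e v\, (\nabla\cdot\nabla^2 q)\cdot\bn_e\, d\ell + \sum_{e\in\cE_K}\int_e \nabla v \cdot (\nabla^2 q\,\bn_e)\, d\ell,
\]
and then check that each term is determined by the degrees of freedom. Your treatment is in fact slightly more explicit than the paper's: you spell out the reconstruction of the full edge polynomials $v|_e$ and $\nabla v|_e$ from the Hermite vertex data plus the edge moments, which is precisely what makes the boundary integrals computable even though the polynomial weights have degree $m-3$ and $m-2$ while the edge moment DOFs only go up to $m-4$ and $m-3$; the paper leaves this step implicit. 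You also carry out the argument for $\Pi_{m-2}^0\nabla^2 v$ in full, whereas the paper simply says it is similar and cites a reference.
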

\begin{proof}
Noting the formula
\[
\int_K \nabla^2 v : \nabla^2 q\, dx =\int_K v  \Delta^2 q\, dx-\sum_{e\in\cE_K}\int_{e} v (\nabla\cdot\nabla^2q)\cdot\mathbf{n}_e\, d\ell
+\sum_{e\in\cE_K}\int_{e} \nabla v \cdot (\nabla^2q\mathbf{n}_e)\, d\ell,
\]
we have that the evaluation of the right-hand side of~\cref{eq:hessproj1} only requires the knowledge of the following moments: the internal moments of $v$ of order up to $m-4$, the edge  moments of $v$ of order up to $m-3$, and the edge  moments of $\partial v/\partial\mathbf{n}_e$ of order up to $m-2$.
On the other hand,  the evaluation  the right-hand side of~\cref{eq:hessproj2} only requires the knowledge of $v$ and its gradient at the vertices.
The proof that   $\Pi_{m-2}^0 \nabla^2 v$ is also computable from the given degrees of freedom of $v$ is similar; see e.g. Ref.~\refcite{cangiani2017} for an analogue result.
\end{proof}

The local $H^2$-conforming virtual element space may now be defined as
\begin{align}\label{eq:vem_local}
V_{h,K}:=\left\{
v\in \widetilde{V}_{h,K}\, : \int_K (v- \Pi_m^{2}v)q\, dx=0, \forall q\in \PP_{m}(K)\setminus\PP_{m-4}(K)
\right\}.
\end{align}
Clearly, we still have  $\PP_{m}(K)\subset V_{h,K}$,
and it is easy to see that the dimension of the space $V_{h,K}$ equals the number of degrees of freedom listed above and hence these may be used as degrees of freedom for $V_{h,K}$; see Ref.~\refcite{chen2022conforming} for the unisolvency proof.
Crucially, given that $\Pi_m^{2}v$ only depends on such degrees of freedom, $V_{h,K}$ is a proper linear subspace of $V_{h,K}$.
Moreover, from the equality  $ \Pi_m^0 v = ( I_d- \Pi_{m-4}^0 ) \Pi_m^{2} v+\Pi_{m-4}^0 v$, we  deduce the computability of $ \Pi_m^0$ on $V_{h,K}$ as well.

\begin{remark}
The enhancement technique leading to the definition of $V_{h,K}$ is required to achieve the computability of $\Pi_m^0 v$. As such, we note that the enhancement technique is not strictly necessary for the discretization of our model problem in two-dimensions as the computability of  $\Pi_{m-2}^0 \nabla^2 v$ suffices in this case, and this is guaranteed also without enhancement. However, we do require the enhanced space in view of defining  the face values of the three-dimensional discrete functions. Moreover, the enhancement  is required if lower-order terms are present in the model, \emph{cf.}~Ref.~\refcite{bcdn}.
\end{remark}

\subsection{Local virtual space in three-dimensions}
\label{subsec:h2vem_3d}

Let $d=3$ and $K\in \cT_h$. We start by fixing the traces of virtual functions over the boundary of $K$ using  two-dimensional virtual element spaces. In particular, the space $V_{h,F}$, $F\in\cF_K$,  defined according to~\cref{eq:vem_local}  is used for the value trace and the standard $H^1$-conforming two-dimensional virtual element space $V_{h,F}^0$ is used for the normal derivative. We introduce the latter here briefly, referring to e.g. Refs.~\refcite{ahmad2013,cangiani2017,chen2022conforming} for the details.

Let $F\in\cF_K$. We first define an enlarged virtual element space as
\begin{align*}
\widetilde{V}_{h,F}^0:=\left\{
v\in H^1(F)\, : \, \Delta v\in \PP_{m-1}(F)  \text{ and } v|_e\in\PP_{m-1}(e), \forall e\in \Gamma_F
\right\}.
\end{align*}
We consider  the local gradient projection $ \Pi_{m-1}^1:H^1(F)\rightarrow \PP_{m-1}(F) $ given by
$\int_F \nabla \Pi_{m-1}^1 v \cdot \nabla q\, dx =\int_F \nabla v \cdot \nabla q\, dx$ for all $q\in  \PP_{m-1}(F)$ and
$\overline{\Pi_{m-1}^1  v}=\overline{v}$. When $v \in \widetilde{V}_{h,F}^0$, the projection $\Pi_{m-1}^1 v$ is proven to be  computable through the local degrees of freedom: (i) the value $v({\rm v})$ for all ${\rm v}\in\mathcal{V}_F$, (ii) $|e|^{-1} \int_e v q \, ds$ for all  $q\in\MM_{m-3}(e)$ and $e\in\Gamma_F$, and
(iii)  $\frac{1}{|F|}\int_F v q\, dx$, for all $q\in\MM_{m-3}(F)$. Then, the local $H^1$-conforming space is defined as
\begin{align*}
V_{h,F}^0:=\{
v\in \widetilde{V}_{h,F}^0\, : \int_F (v- \Pi_{m-1}^1 v)q\, dx=0, \forall q\in \PP_{m-1}(F)\setminus\PP_{m-3}(F)
\},
\end{align*}
having the above as degrees of freedom. Such space is characterized by the fact that $ \PP_{m-1}(F)\subset V_{h,F}^0$ and the property of computability, beside  $ \Pi_{m-1}^1$, of the $L^2$-projection $\Pi_{m-1}^0$.

Having the virtual trace spaces sorted, we can now implement once more the enhancement technique to define an $H^2$-conforming virtual element over $K\in \cT_h$. First, the enlarged virtual element space is defined by
\begin{align*}
\widetilde{V}_{h,K}:=\left\{\right.
v\in H^2(K)\, : &\,  \nabla^j v|_{\Gamma_K}\in H^1(\Gamma_K), \nabla^j
v|_{\partial K}\in H^1(\partial K), j=0,1, \text{and}
\\
&\left. \Delta^2 v\in \PP_m(K),
v|_F\in V_{h,F},
\partial_{\mathbf{n}_{F}}v|_F\in V_{h,F}^0,
\forall F\in \cF_K
\right\},
\end{align*}
where $\Gamma_K := \bigcup_{e \in \cE_K} \overline e$; observe that similarly, $\partial K = \bigcup_{F \in \cF_K} \overline F$. Once again we have $\PP_{m}\subset \widetilde{V}_{h,K}$ and continuity  on $\partial K$ up to the gradient of all functions in  $\widetilde{V}_{h,K}$, \emph{cf.}~Ref.~\refcite{chen2022conforming}.
It is easy to prove once again \cref{prop:hessian}, namely  the  computability of the Hessian projector~\cref{eq:hessproj1},~\cref{eq:hessproj2}, given the following (incomplete for $\widetilde{V}_{h,K}$) degrees of freedom.
\begin{definition}[three-dimensional degrees of freedom]\label{def:3Ddof}
On  $K\in \cT_h$, we consider the \emph{local degrees of freedom} as those listed in \cref{def:2Ddof} plus
\begin{itemlist}
\item $\displaystyle |F|^{j/2-1} \int_F \frac{\partial^j v}{\partial \mathbf{n}_F^j} q \, ds$, for all  $q\in\MM_{m+j-4}(F)$, $j=0,1$ and $F\in\cF_K$.
\end{itemlist}
\end{definition}
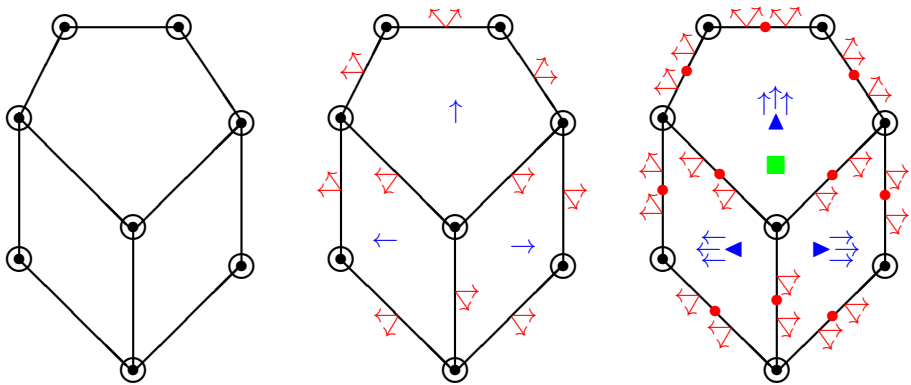
\begin{figure}[ht]
\centering
\setlength{\unitlength}{1.5cm}
\begin{picture}(2,3)
	\thicklines
	\put(0,2){\line(1,2){.4}}
	\put(.4,2.8){\line(1,0){1}}
	\put(1.4,2.8){\line(2,-3){.55}}
	\put(0,2){\line(1,-1){1}}
	\put(1,1){\line(1,1){.96}}
	\put(0,2){\circle*{.1}}
	\put(.4,2.8){\circle*{.1}}
	\put(1.4,2.8){\circle*{.1}}
	\put(1.95,1.95){\circle*{.1}}
	\put(1,1.03){\circle*{.1}}
	\put(0,2){{\color{black} \circle{.2}}}
	\put(.4,2.8){{\color{black} \circle{.2}}}
	\put(1.4,2.8){{\color{black} \circle{.2}}}
	\put(1.95,1.95){{\color{black} \circle{.2}}}
	\put(1,1.03){{\color{black} \circle{.2}}}
	\put(0,.75){\line(0,1){1.3}}
	\put(1,-.25){\line(0,1){1.3}}
	\put(1.95,.68){\line(0,1){1.28}}
	\put(0,.75){\line(1,-1){1}}
	\put(1,-.25){\line(1,1){.96}}
	\put(0,.75){\circle*{.1}}
	\put(1,-.23){\circle*{.1}}
	\put(1.95,.69){\circle*{.1}}
	\put(0,.75){\circle{.2}}
	\put(1,-.23){\circle{.2}}
	\put(1.95,.69){\circle{.2}}
\end{picture}
\hspace{1cm}
\begin{picture}(2,3)
	\thicklines
	\put(0,2){\line(1,2){.4}}
	\put(.4,2.8){\line(1,0){1}}
	\put(1.4,2.8){\line(2,-3){.55}}
	\put(0,2){\line(1,-1){1}}
	\put(1,1){\line(1,1){.96}}
	\put(0,2){\circle*{.1}}
	\put(.4,2.8){\circle*{.1}}
	\put(1.4,2.8){\circle*{.1}}
	\put(1.95,1.95){\circle*{.1}}
	\put(1,1.03){\circle*{.1}}
	\put(0,2){{\color{black} \circle{.2}}}
	\put(.4,2.8){{\color{black} \circle{.2}}}
	\put(1.4,2.8){{\color{black} \circle{.2}}}
	\put(1.95,1.95){{\color{black} \circle{.2}}}
	\put(1,1.03){{\color{black} \circle{.2}}}
	\put(0,.75){\line(0,1){1.3}}
	\put(1,-.25){\line(0,1){1.3}}
	\put(1.95,.68){\line(0,1){1.28}}
	\put(0,.75){\line(1,-1){1}}
	\put(1,-.25){\line(1,1){.96}}
	\put(0,.75){\circle*{.1}}
	\put(1,-.23){\circle*{.1}}
	\put(1.95,.69){\circle*{.1}}
	\put(0,.75){\circle{.2}}
	\put(1,-.23){\circle{.2}}
	\put(1.95,.69){\circle{.2}}
	\put(-.02,2.34){{\color{red}  {$\leftarrow$}}}
	\put(.04,2.44){{\color{red}  \rotatebox[origin=c]{-60}{$\leftarrow$}}}
	\put(.73,2.83){{\color{red}   \rotatebox[origin=c]{-50}{$\leftarrow$}}}
	\put(.89,2.83){{\color{red}  \rotatebox[origin=c]{55}{$\rightarrow$}}}
	\put(1.66,2.40){{\color{red}  \rotatebox[origin=c]{60}{$\rightarrow$}}}
	\put(1.68,2.3){{\color{red}  \rotatebox[origin=c]{0}{$\rightarrow$}}}
	\put(1.48,1.44){{\color{red}  \rotatebox[origin=c]{0}{$\rightarrow$}}}
	\put(1.44,1.36){{\color{red}  \rotatebox[origin=c]{-60}{$\rightarrow$}}}
	\put(.35,1.36){{\color{red}  \rotatebox[origin=c]{55}{$\leftarrow$}}}
	\put(.28,1.44){{\color{red}  \rotatebox[origin=c]{0}{$\leftarrow$}}}
	\put(-.17,1.39){{\color{red}  \rotatebox[origin=c]{-55}{$\leftarrow$}}}
	\put(-.22,1.3){{\color{red}  {$\leftarrow$}}}
	\put(1.9,1.22){{\color{red}  \rotatebox[origin=c]{-55}{$\rightarrow$}}}
	\put(1.94,1.3){{\color{red}  {$\rightarrow$}}}
	\put(.95,.33){{\color{red}  \rotatebox[origin=c]{-55}{$\rightarrow$}}}
	\put(.99,.41){{\color{red}  {$\rightarrow$}}}
	\put(1.44,.11){{\color{red}  \rotatebox[origin=c]{-55}{$\rightarrow$}}}
	\put(1.49,.19){{\color{red}  {$\rightarrow$}}}
	\put(.37,.11){{\color{red}  \rotatebox[origin=c]{60}{$\leftarrow$}}}
	\put(.28,.19){{\color{red}  {$\leftarrow$}}}
	\put(.95,2){{\color{blue} $\uparrow$}}
	\put(1.48,.8){{\color{blue}$\rightarrow$}}
	\put(.27,.86){{\color{blue}$\leftarrow$}}
\end{picture}
\hspace{1cm}
\begin{picture}(2,3)
	\thicklines
	\put(0,2){\line(1,2){.4}}
	\put(.4,2.8){\line(1,0){1}}
	\put(1.4,2.8){\line(2,-3){.55}}
	\put(0,2){\line(1,-1){1}}
	\put(1,1){\line(1,1){.96}}
	\put(0,2){\circle*{.1}}
	\put(.4,2.8){\circle*{.1}}
	\put(1.4,2.8){\circle*{.1}}
	\put(1.95,1.95){\circle*{.1}}
	\put(1,1.03){\circle*{.1}}
	\put(0,2){{\color{black} \circle{.2}}}
	\put(.4,2.8){{\color{black} \circle{.2}}}
	\put(1.4,2.8){{\color{black} \circle{.2}}}
	\put(1.95,1.95){{\color{black} \circle{.2}}}
	\put(1,1.03){{\color{black} \circle{.2}}}
	\put(0,.75){\line(0,1){1.3}}
	\put(1,-.25){\line(0,1){1.3}}
	\put(1.95,.68){\line(0,1){1.28}}
	\put(0,.75){\line(1,-1){1}}
	\put(1,-.25){\line(1,1){.96}}
	\put(0,.75){\circle*{.1}}
	\put(1,-.23){\circle*{.1}}
	\put(1.95,.69){\circle*{.1}}
	\put(0,.75){\circle{.2}}
	\put(1,-.23){\circle{.2}}
	\put(1.95,.69){\circle{.2}}
	\put(.21,2.41){{\color{red}\circle*{.1}}}
	\put(.9,2.8){{\color{red}\circle*{.1}}}
	\put(1.68,2.38){{\color{red}\circle*{.1}}}
	\put(1.49,1.49){{\color{red}\circle*{.1}}}
	\put(.5,1.5){{\color{red}\circle*{.1}}}
	\put(0,1.36){{\color{red}\circle*{.1}}}
	\put(1,.39){{\color{red}\circle*{.1}}}
	\put(1.95,1.32){{\color{red}\circle*{.1}}}
	\put(.46,.29){{\color{red}\circle*{.1}}}
	\put(1.49,.25){{\color{red}\circle*{.1}}}
	\put(.06,2.51){{\color{red}  {$\leftarrow$}}}
	\put(.12,2.61){{\color{red}  \rotatebox[origin=c]{-60}{$\leftarrow$}}}
	\put(-.09,2.2){{\color{red}  {$\leftarrow$}}}
	\put(-.03,2.3){{\color{red}  \rotatebox[origin=c]{-60}{$\leftarrow$}}}
	\put(.90,2.83){{\color{red}  \rotatebox[origin=c]{-50}{$\leftarrow$}}}
	\put(1.05,2.84){{\color{red}  \rotatebox[origin=c]{55}{$\rightarrow$}}}
	\put(.55,2.83){{\color{red}  \rotatebox[origin=c]{-50}{$\leftarrow$}}}
	\put(.71,2.84){{\color{red}  \rotatebox[origin=c]{55}{$\rightarrow$}}}
	\put(1.79,2.14){{\color{red}   {$\rightarrow$}}}
	\put(1.77,2.23){{\color{red}  \rotatebox[origin=c]{60}{$\rightarrow$}}}
	\put(1.55,2.57){{\color{red}  \rotatebox[origin=c]{60}{$\rightarrow$}}}
	\put(1.56,2.47){{\color{red} {$\rightarrow$}}}
	\put(1.58,1.5){{\color{red}  \rotatebox[origin=c]{-55}{$\rightarrow$}}}
	\put(1.62,1.58){{\color{red}  {$\rightarrow$}}}
	\put(1.28,1.2){{\color{red}  \rotatebox[origin=c]{-55}{$\rightarrow$}}}
	\put(1.32,1.27){{\color{red}  {$\rightarrow$}}}
	\put(.49,1.22){{\color{red}  \rotatebox[origin=c]{55}{$\leftarrow$}}}
	\put(.42,1.3){{\color{red}  {$\leftarrow$}}}
	\put(.21,1.5){{\color{red}  \rotatebox[origin=c]{55}{$\leftarrow$}}}
	\put(.13,1.58){{\color{red}  {$\leftarrow$}}}
	\put(-.18,1.56){{\color{red}  \rotatebox[origin=c]{-55}{$\leftarrow$}}}
	\put(-.22,1.47){{\color{red}  {$\leftarrow$}}}
	\put(-.18,1.19){{\color{red}  \rotatebox[origin=c]{-55}{$\leftarrow$}}}
	\put(-.22,1.1){{\color{red}  {$\leftarrow$}}}
	\put(1.9,1.4){{\color{red}  \rotatebox[origin=c]{-55}{$\rightarrow$}}}
	\put(1.94,1.47){{\color{red}  {$\rightarrow$}}}
	\put(1.9,1){{\color{red}  \rotatebox[origin=c]{-55}{$\rightarrow$}}}
	\put(1.94,1.08){{\color{red}  {$\rightarrow$}}}
	\put(.95,.46){{\color{red}  \rotatebox[origin=c]{-55}{$\rightarrow$}}}
	\put(.99,.54){{\color{red}  {$\rightarrow$}}}
	\put(.95,.09){{\color{red}  \rotatebox[origin=c]{-55}{$\rightarrow$}}}
	\put(.99,.17){{\color{red}  {$\rightarrow$}}}
	\put(1.54,.21){{\color{red}  \rotatebox[origin=c]{-55}{$\rightarrow$}}}
	\put(1.59,.29){{\color{red}  {$\rightarrow$}}}
	\put(1.31,-.02){{\color{red}  \rotatebox[origin=c]{-55}{$\rightarrow$}}}
	\put(1.36,.06){{\color{red}  {$\rightarrow$}}}
	\put(.19,.29){{\color{red}  \rotatebox[origin=c]{60}{$\leftarrow$}}}
	\put(.1,.37){{\color{red}  {$\leftarrow$}}}
	\put(.47,.0){{\color{red}  \rotatebox[origin=c]{60}{$\leftarrow$}}}
	\put(.38,.09){{\color{red}  {$\leftarrow$}}}
	\put(.91,1.9){{\color{blue}$\blacktriangle$}}
	\put(.93,2.12){{\color{blue} $\uparrow$}}
	\put(1.03,2.07){{\color{blue} $\uparrow$}}
	\put(.83,2.07){{\color{blue} $\uparrow$}}

	\put(1.3,.78){{\color{blue}$\blacktriangleright$}}
	\put(1.5,.78){{\color{blue}$\rightarrow$}}
	\put(1.45,.68){{\color{blue}$\rightarrow$}}
	\put(1.45,.88){{\color{blue}$\rightarrow$}}

	\put(.53,.78){{\color{blue}$\blacktriangleleft$}}
	\put(.28,.78){{\color{blue}$\leftarrow$}}
	\put(.33,.68){{\color{blue}$\leftarrow$}}
	\put(.33,.88){{\color{blue}$\leftarrow$}}

	\put(.9,1.5){{\color{green}$\blacksquare$}}
\end{picture}

\begin{picture}(0,.1)
\end{picture}
	\caption{Degrees of freedom of the $H^2$-conforming VEM for $m=2,3,4$.}
	\label{C1-dof-3D}
\end{figure}

The degrees of freedom corresponding to $m=2,3,4$ are shown in \cref{C1-dof-3D}.
The local virtual element space $V_{h,K}$ for $K\in\cT_h$ when $d=3$ can now be defined as in~\cref{eq:vem_local}. The proof that the set of degrees of freedom in \cref{def:3Ddof} is unisolvent in $V_{h,K}$ and that they allow the evaluation of $\Pi_m^0$ alongside $\Pi_m^{2}$ can be found in Ref.~\refcite{chen2022conforming}.

\subsection{Local projection operator and stabilization term}
\label{subsec:h2vem_proj_stab}

In view of completing the construction of a VEM according to \cref{sec:discr}, it remains to fix a local projection operator $\Pi_m^*$ satisfying~\cref{eq:proj_stability} and a local stabilization form $s_{h, K}$ satisfying~\cref{eq:stab_scaling}.

The local Hessian projection is used for the projection operator $\Pi_m^*$ of \cref{sec:discr}; hence, we fix
\[
\Pi_m^*:= \Pi_m^{2}.
\]
For any $u \in V_{h, K}$, one has $\|\Pi_m^2 u\|_{2, K} = \|\Pi_m^0 u + \Pi_m^2 (u - \Pi_m^0 u)\|_{2, K} \lesssim \|u\|_{2, K} + \|\Pi_m^2 (u - \Pi_m^0 u)\|_{2, K}$. It is easy to deduce from (4.11) in Ref.~\refcite{chen2022conforming}, the trace inequality, and classical polynomial projection estimates\cite{brenner2008}, that $\|\Pi_m^2 (u - \Pi_m^0 u)\|_{2, K} \lesssim |u|_{2, K}$, from which we deduce that \cref{eq:proj_stability} holds.


For any $u$, $v \in V_{h, K}$,  we let
\begin{equation*}
	\begin{split}
		s_{h, K}^{0}(u,v)&:=\sum_{{\rm v}\in\mathcal{V}_K}\sum_{j=0}^1h_{K}^{2j}\,\left(\nabla^ju({\rm v})\right)
		\cdot \left(\nabla^jv({\rm v})\right),
		\\
		s_{h, K}^{1}(u,v)&:=\sum_{e\in\cE_K}\sum_{j=0}^1
		h_{K}^{2j-1}
		\sum_{i=1}^{d-1}
		\int_e
		\Big(\Pi^{0,e}_{m+j-4}\frac{\partial^j u}{\partial \mathbf{n}_{e,i}^j}\Big)
		\Big(
		\Pi^{0,e}_{m+j-4}\frac{\partial^j v}{\partial \mathbf{n}_{e,i}^j} \Big)
		\, d\ell,
	\end{split}
\end{equation*}
and, when $d=3$, we also introduce
\[
s_{h, K}^{2}(u,v):=
		\sum_{F\in\cF_K}
		\sum_{j=0}^1
		h_{K}^{2j-2}
		\int_F
		\Big(\Pi^{0,F}_{m+j-4}\frac{\partial^j u}{\partial \mathbf{n}_{F}^j}\Big)
		\Big(
		\Pi^{0,F}_{m+j-4}\frac{\partial^j v}{\partial \mathbf{n}_{F}^j} \Big)
		\, ds.
\]
Note in particular that only the form $s_{h, K}^{0}$ is non-zero when $m=2$. We thus fix the local stabilization form as
\[
s_{h, K}(u,v):=\beta h_K^{d-4}
\sum_{l=0}^{d-1}s_{h, K}^{l}(u,v),
\]
where $\beta > 0$ is some arbitrary scaling factor. It is shown in Ref.~\refcite{chen2022conforming} (see equation (5.2) therein) that, for all $v\in V_{h,K}$,
\begin{equation}
    \label{eq:stab_scaling_from_ref}
    s_{h, K}(v-\Pi_m^{2} v,v-\Pi_m^{2} v)\eqsim \beta \|\nabla^2 (v-\Pi_m^{2} v)\|_{0, K}^2.
\end{equation}
Both inequalities in \cref{eq:stab_scaling} immediately follow, with constants $c_*$ and $c^*$ proportional to $\beta$. This implies that this choice of the stabilization form is admissible.

The inequality $c_* > \mu$, required in our proof of well-posedness of the scheme, holds provided that $\beta$ is large enough. Computing some value of $\beta$ for which the inequality $c_* > \mu$ is theoretically guaranteed would require a detailed study of the constants in the proof of \cref{eq:stab_scaling_from_ref}. Obtaining a sharp estimate of the critical value of $\beta$ is likely difficult. In practice, we simply fixed $\beta = 1$ in our numerical experiments, which led to satisfactory results (see \cref{sec:numerics}).

\subsection{Global spaces and quasi-interpolation operators}
\label{subsec:h2vem_global_interp}

It is clear that the local virtual element spaces defined above allow for the construction of a global space $V_h$ as a subspace of $H^2(\Omega)$ according to~\cref{eq:vemspace}. The elements of $V_h$ may be identified in standard fashion by the collection of the local degrees of freedom in \cref{def:2Ddof,def:3Ddof}. Then, the space $V_h^0$ may be defined according to \cref{eq:vemspacedirichlet}, which amounts to fixing the relevant boundary degrees of freedom. Likewise, the space $V_h^{g_I}$ may be defined according to \cref{eq:vemspacedirichlet} once the function $g_I \in V_h$ is chosen.

Let us now describe interpolation operators suitable for use with the virtual element spaces $V_h$, $V_h^0$, and $V_h^{g_I}$.

\subsubsection{Scott-Zhang type interpolation}
\label{subsubsec:h2vem_scott_zhang}

We aim to design an interpolation operator $I_h^{\SZ} \colon H^2(\Omega) \to V_h$ satisfying the following property:
\begin{equation}
    \label{eq:interp_boundary_dependence}
    \begin{split}
        &\text{\textit{for any $v$, $w \in H^2(\Omega)$ such that $v|_{\partial \Omega} = w|_{\partial \Omega}$,}} \\
        &\text{\textit{one has $(I_h^{\SZ} v)|_{\partial \Omega} = (I_h^{\SZ} w)|_{\partial \Omega}$.}}
    \end{split}
\end{equation}
This operator can be used in order to choose $g_I$ as $g_I := I_h^{\SZ} g$, and $u_I$ as $u_I := I_h^{\SZ} u$ in the error estimates \cref{eq:error_estimate,eq:quadr_error_estimate_continuous,eq:quadr_error_estimate_discontinuous}. The property \cref{eq:interp_boundary_dependence} guarantees that $u_I$ coincides with $g_I$ on $\partial \Omega$ and thus belongs to $V_h^{g_I}$. 

Following the Scott-Zhang construction, we consider the set of \emph{sides} $\Sigma_K$ associated to any $K \in \cT_h$, \emph{i.e.}~$\Sigma_K := \cE_K$ if $d=2$ and $\Sigma_K := \cF_K$ if $d=3$. We let $\Sigma_h := \bigcup_{K \in \cT_h} \Sigma_K$ and denote by $\bn_\sigma$ a unit normal vector to each $\sigma \in \Sigma_h$.

To every vertex $\rv \in \cV_h$, we associate sides $\sigma_{\rv,i} \in \Sigma_h$, $i=0,d$, such that $\rv \in \partial \sigma_{\rv,i}$ (conceptually, $\sigma_{\rv,0}$ will be used to fix the value degree of freedom $v(\rv)$ and $\sigma_{\rv,1}$, $\ldots$, $\sigma_{\rv,d}$ will be used to fix the gradient degree of freedom $\nabla v(\rv)$; why these sides may need to be chosen differently will become apparent below when we explain how to enforce the property \cref{eq:interp_boundary_dependence}). Similarly, when $d=3$, to every edge $e \in \cE_h$ we associate sides $\sigma_{e,i} \in \Sigma_h$, $i=0,d-1$, such that $e \subset \partial \sigma_{e,i}$.

In dimension $d=2$, for any $v \in H^2(\Omega)$ and $K \in \cT_h$, we define the local interpolant $I_K^{\SZ} v \in V_{h, K}$ as the function in $V_{h, K}$ satisfying:
\begin{itemlist}
    \item $\Pi_{m-4}^{0,K} I_K^{\SZ} v = \Pi_{m-4}^{0,K} v$;
    \item For any $\sigma \in \Sigma_K$, $\Pi_{m-4}^{0,\sigma} I_K^{\SZ} v = \Pi_{m-4}^{0,\sigma} v$ and $\Pi_{m-3}^{0,\sigma} \partial_{\bn_\sigma} I_K^{\SZ} v = \Pi_{m-3}^{0,\sigma} \partial_{\bn_\sigma} v$;
    \item For any $\rv \in \cV_K$, $(I_K^{\SZ} v)(\rv) = (\Pi_m^{0, \sigma_{\rv,0}} v)(\rv)$ and $(\partial_{\bn_{\rv,i}} I_K^{\SZ} v)(\rv) = (\Pi_{m-1}^{0, \sigma_{\rv,i}} \partial_{\bn_{\rv,i}} v)(\rv)$ for $i=1,d$.
\end{itemlist}
In dimension $d=3$, we keep the three above conditions and add the following:
\begin{itemlist}
    \item For any $e \in \cE_K$: $\Pi_{m-4}^{0,e} I_K^{\SZ} v = \Pi_{m-4}^{0,e} \Pi_m^{0, \sigma_{e,0}} v$ and $\Pi_{m-3}^{0,e} \partial_{\bn_{e,i}} I_K^{\SZ} v = \Pi_{m-3}^{0,e} \Pi_{m-1}^{0, \sigma_{e,i}} \partial_{\bn_{e,i}} v$ for $i=1,d-1$.
\end{itemlist}
It is easily verified that this uniquely determines each one of the degrees of freedom characterizing $I_K^{\SZ} v$. We then define $I_h^{\SZ} v \in V_h$ by $(I_h^{\SZ} v)|_K = I_K^{\SZ} v$ in every $K \in \cT_h$.

To establish \cref{eq:interp_boundary_dependence}, we need to make some additional assumptions. First we require that for any boundary vertex $\rv \in \cV_h \cap \partial \Omega$, the side $\sigma_{\rv,0}$ is included in $\partial \Omega$, and, if $d=3$ that for any boundary edge $e \in \cE_h$, $e \subset \partial \Omega$, the side $\sigma_{e,0}$ is included in $\partial \Omega$. This is not enough: for instance, if $\rv \in \cV_h \cap \partial \Omega$ belongs to the boundary of a side of the polytope $\Omega$, then the gradient degree of freedom $(\nabla I v)(\rv)$ is prescribed by $(I v)|_{\partial \Omega}$, and thus should depend only on $v|_{\partial \Omega}$. If $\rv$ belongs to the interior of a side of $\Omega$, then only the tangential part of $(\nabla I v)(\rv)$ is prescribed by $(I v)|_{\partial \Omega}$. Accordingly, we define $\delta_\rv := d$ in the first case and $\delta_\rv := d-1$ in the second case, and we assume that for $i=1,\delta_\rv$, the side $\sigma_{\rv,i}$ is included in $\partial \Omega$ and the vector $\bn_{\rv,i}$ is tangential to $\sigma_{\rv,i}$. This can always be achieved upon choosing the family of vectors $\bn_{\rv,i}$ appropriately. If $d=3$, then similarly for any boundary edge $e \in \cE_h$, $e \subset \partial \Omega$, we define $\delta_e := d-1$ if $e$ is included in the boundary of a side of $\Omega$, and $\delta_e := d-2$ if $e$ is included in the interior of a side of $\Omega$, and we assume that for $i=1,\delta_e$, the side $\sigma_{e,i}$ is included in $\partial \Omega$ and the vector $\bn_{e,i}$ is tangential to $\sigma_{e,i}$.

We refer to \cref{fig:interp} for an illustration of the above construction in dimension two.

\begin{figure}[ht]
    \centering
    \begin{tikzpicture}
        \draw (0,0) -- (-0.5,-1) ;
        \draw (0,0) -- (0.5,-1) ;
        \color{Blue}
        \fill (0,0) circle (0.07) ;
        \draw[ultra thick] (0,0) -- (1,0) ;
        \color{black}
        \draw (1.5,0) -- (1,-1) ;
        \draw (1.5,0) -- (2,-1) ;
        \color{Blue}
        \draw[->, thick] (1.5,0) -- (1.2,0) ;
        \draw[ultra thick] (1.5,0) -- (2.5,0) ;
        \color{black}
        \draw (3,0) -- (4,0) ;
        \draw (3,0) -- (3.5,-1) ;
        \color{Blue}
        \draw[->, thick] (3,0) -- (3.1,0.2) ;
        \draw[ultra thick] (3,0) -- (2.5,-1) ;
        \color{black}
        \draw (5,0) -- (6,0) ;
        \draw (6,0) -- (5.5,-1) ;
        \draw (6,0) -- (6.5,-1) ;
        \color{Blue}
        \fill (6,0) circle (0.07) ;
        \draw[ultra thick] (6,0) -- (7,0) ;
        \color{black}
        \draw (7.5,0) -- (8.5,0) ;
        \draw (8.5,0) -- (8,-1) ;
        \draw (8.5,0) -- (9,-1) ;
        \color{Blue}
        \draw[->, thick] (8.5,0) -- (8.2,0) ;
        \draw[ultra thick] (8.5,0) -- (9.5,0) ;
        \color{black}
        \draw (10,0) -- (11,0) ;
        \draw (11,0) -- (12,0) ;
        \draw (11,0) -- (10.5,-1) ;
        \color{Green}
        \draw[->, thick] (11,0) -- (11,0.3) ;
        \draw[ultra thick] (11,0) -- (11.5,-1) ;
        \color{black}
    \end{tikzpicture}
    \caption{Illustration of a construction satisfying the condition \cref{eq:interp_boundary_dependence} in dimension two. Left: on vertices of $\Omega$, the value degree of freedom and both components of the gradient degrees of freedom need to be imposed by the boundary data. Right: on vertices of $\cT_h$ that belong to the interior of an edge of $\Omega$, only the value degree of freedom and the tangential part of the gradient degrees of freedom (in blue, as opposed to the normal part in green) need to be imposed by the boundary data.}
    \label{fig:interp}
\end{figure}

Let us now study the stability and the accuracy of the local interpolation operators $I_K^{\SZ}$ so defined. Our analysis is based on the following direct specialization of one of the inequalities in the norm equivalence result Lemma~4.7 in Ref.~\refcite{chen2022conforming}.

\begin{proposition}[discrete stability]
    \label{prop:norm_equivalence}
    Let $K \in \cT_h$. If $d = 2$, then for any $v \in V_{h,K}$,
    \begin{align*}
        \|v\|_{2,K}
        &\lesssim h_K^{-2} \|\Pi_{m-4}^{0,K} v\|_{0,K} + h_K^{-3/2} \sum_{e \in \cE_K} \|\Pi_{m-4}^{0,e} v\|_{0,e} + h_K^{-1/2} \sum_{e \in \cE_K} \|\Pi_{m-3}^{0,e} \partial_{\bn_{e,1}} v\|_{0,e} \\
        &\quad + h_K^{-1} \sum_{\rv \in \cV_K} |v(\rv)| + \sum_{v \in \cV_K} \sum_{i=1}^2 |\partial_{\bn_{\rv,i}} v(\rv)|.
    \end{align*}
    If $d = 3$, then for any $v \in V_{h,K}$,
    \begin{align*}
        \|v\|_{2,K}
        &\lesssim h_K^{-2} \|\Pi_{m-4}^{0,K} v\|_{0,K} + h_K^{-3/2} \sum_{F \in \cF_K} \|\Pi_{m-4}^{0,F} v\|_{0,F} + h_K^{-1/2} \sum_{F \in \cF_K} \|\Pi_{m-3}^{0,F} \partial_{\bn_F} v\|_{0,F} \\
        &\quad + h_K^{-1} \sum_{e \in \cE_K} \|\Pi_{m-4}^{0,e} v\|_{0,e} + \sum_{e \in \cE_K} \sum_{i=1}^2 \|\Pi_{m-3}^{0,e} \partial_{\bn_{e,i}} v\|_{0,e} \\
        &\quad + h_K^{-1/2} \sum_{\rv \in \cV_K} |v(\rv)| + h_K^{1/2} \sum_{v \in \cV_K} \sum_{i=1}^3 |\partial_{\bn_{\rv,i}} v(\rv)|.
    \end{align*}
\end{proposition}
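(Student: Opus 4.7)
The plan is to deduce the stated bounds from the norm equivalence result of Lemma~4.7 in Ref.~\refcite{chen2022conforming}, which asserts that on $V_{h,K}$ the local $H^2$ norm is equivalent to a properly weighted $\ell^2$-sum of the individual polynomial-moment DOFs of \cref{def:2Ddof,def:3Ddof}. Since we need only the upper bound direction, the entire task reduces to repackaging the weighted sum of individual DOFs into the $L^2$-projection norms appearing in the statement.

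First, I would observe that the interior DOFs $\{|K|^{-1}\int_K v q\,dx\}_{q\in\MM_{m-4}(K)}$ completely determine $\Pi_{m-4}^{0,K} v$: writing $\Pi_{m-4}^{0,K} v = \sum_\alpha c_\alpha q_\alpha$ against a basis dual to $\MM_{m-4}(K)$, the coefficients $c_\alpha$ are precisely the corresponding integral moments of $v$, and a standard scaling argument on the (uniformly) star-shaped element $K$ gives $\|\Pi_{m-4}^{0,K} v\|_{0,K}^2 \eqsim |K| \sum_{q\in\MM_{m-4}(K)} \bigl(|K|^{-1}\int_K v q\,dx\bigr)^2$. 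An identical argument applied side-by-side to each edge $e \in \cE_K$ (and, in three dimensions, each face $F \in \cF_K$) shows that the edge/face moment DOFs of $v$ and of its normal derivatives $\partial^j_{\bn} v$ are equivalent, after weighting by the appropriate power of $|e|$ or $|F|$, to $\|\Pi^{0,e}_{m+j-4}\partial^j_\bn v\|_{0,e}^2$ and $\|\Pi^{0,F}_{m+j-4}\partial^j_{\bn_F} v\|_{0,F}^2$. The vertex DOFs $h_\rv^j \nabla^j v(\rv)$ translate trivially using $h_\rv \eqsim h_K$ from shape-regularity.

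Next, I would collect powers of $h_K$ carefully. The Lemma~4.7 equivalence involves weights (depending only on $d$, $\rho$, $\eta$, and $m$) that combine with the intrinsic DOF normalizations in \cref{def:2Ddof,def:3Ddof} (namely $h_\rv^j$ at vertices, $|e|^{j-1}$ on edges, $|F|^{j/2-1}$ on faces, $|K|^{-1}$ in the interior) and with the mesh scales $|K| \eqsim h_K^d$, $|F| \eqsim h_K^{d-1}$, $|e| \eqsim h_K$. Combining these yields precisely the powers $h_K^{-2}$, $h_K^{-3/2}$, $h_K^{-1/2}$, etc., displayed in the statement. The only genuine subtlety in this step is the appearance of the nondegenerate bases $\bn_{\rv,i}$ and $\bn_{e,i}$ at vertices and edges: the assumed nondegeneracy $|\bn|^2 \lesssim \sum_i \langle \bn_{\rv,i},\bn\rangle^2$ (resp.\ the analogous tangential estimate at edges) is what allows replacing the full gradient (resp.\ tangential gradient) by the directional derivatives along the chosen frames.

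The main obstacle is purely the careful bookkeeping of scalings: three different normalization conventions (at vertices, on $k$-dimensional faces for several values of $k$, and in the interior) must be reconciled, and the shape-regularity assumptions $h_K \eqsim h_F \eqsim h_e \eqsim h_\rv$ are what make this possible with universal constants. No further conceptual ingredient is required beyond the cited norm equivalence.
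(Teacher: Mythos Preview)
Your proposal is correct and matches the paper's approach exactly: the paper does not give an independent proof but simply states that the proposition is ``a direct specialization of one of the inequalities in the norm equivalence result Lemma~4.7 in Ref.~\refcite{chen2022conforming}.'' What you outline is precisely that specialization carried out in detail, so there is nothing to add.
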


For any $K \in \cT_h$, let $\omega_K := \intt (\bigcup \{\overline {K'} \mid K' \in \cT_h,\, \overline {K'} \cap \overline K \neq \emptyset\})$. We first prove the following stability result (that we call \emph{intermediate} since we will be able to improve it later, see \cref{coro:interp_stability_full_sz}).

\begin{lemma}[intermediate Scott-Zhang stability]
    \label{lemma:interp_stability_partial_sz}
    For any $v \in H^2(\Omega)$ and $K \in \cT_h$, one has
    \begin{equation*}
        \|I_K^{\SZ} v\|_{2,K} \lesssim  \sum_{i=0}^2 h_K^{i-2} |v|_{i, \omega_K}.
    \end{equation*}
\end{lemma}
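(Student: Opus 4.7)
The plan is to apply the discrete stability bound of \cref{prop:norm_equivalence} to $w = I_K^{\SZ} v$, replace each term using the defining relations of $I_K^{\SZ}$, and then control the resulting quantities by appropriate $L^2$-projection stability, trace inequalities and polynomial inverse estimates. Since each defining relation of $I_K^{\SZ}$ expresses a degree of freedom of the interpolant as an $L^2$-projection of $v$ or of a derivative of $v$ onto some polynomial space on $K$, a side $\sigma\in \Sigma_h$, or (in $3$D) an edge $e\in\cE_h$, the sides involved are all contained in~$\overline{\omega_K}$, so all the resulting bounds can indeed be written in terms of norms over $\omega_K$.

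First I would treat the volumetric and facet moment terms. For instance, in $d=2$, $\Pi_{m-4}^{0,K} I_K^{\SZ} v = \Pi_{m-4}^{0,K} v$ and $\Pi_{m-4}^{0,e} I_K^{\SZ} v = \Pi_{m-4}^{0,e} v$, so by $L^2$-stability of the projection and the standard trace inequality $\|v\|_{0,e} \lesssim h_K^{-1/2}\|v\|_{0,K'} + h_K^{1/2}|v|_{1,K'}$ on the element $K'\supset e$, these terms contribute $h_K^{-2}\|v\|_{0,\omega_K}$ and $h_K^{-2}\|v\|_{0,\omega_K} + h_K^{-1}|v|_{1,\omega_K}$ respectively. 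An analogous treatment, based on $\Pi_{m-3}^{0,e}\partial_{\bn_{e,1}}I_K^{\SZ}v = \Pi_{m-3}^{0,e}\partial_{\bn_{e,1}}v$ and the trace inequality applied to $\nabla v$, controls the normal-derivative edge term by $h_K^{-1}|v|_{1,\omega_K} + |v|_{2,\omega_K}$. The $3$D face terms and the $3$D edge terms (the latter relying on the intermediate projections $\Pi_{m-4}^{0,e}\Pi_m^{0,\sigma_{e,0}}v$ and $\Pi_{m-3}^{0,e}\Pi_{m-1}^{0,\sigma_{e,i}}\partial_{\bn_{e,i}}v$) are handled identically, using that $L^2$-projections are contractions and applying the trace inequality twice if needed: once from a volume to a face, once from a face to an edge.

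Next I would handle the vertex terms. For $\rv\in\cV_K$, $(I_K^{\SZ} v)(\rv) = (\Pi_m^{0,\sigma_{\rv,0}}v)(\rv)$ is a point evaluation of a polynomial on the side $\sigma_{\rv,0}\in\Sigma_h$. Using the polynomial inverse estimate $\|p\|_{\infty,\sigma}\lesssim |\sigma|^{-1/2}\|p\|_{0,\sigma}$ (in $d=2$) or $\|p\|_{\infty,\sigma}\lesssim |\sigma|^{-1}\|p\|_{0,\sigma}$ (in $d=3$), followed by $L^2$-stability of the projection and the trace inequality from $\sigma$ back to an element of $\omega_K$, one controls $|(I_K^{\SZ}v)(\rv)|$ by $h_K^{-d/2}\|v\|_{0,K'} + h_K^{1-d/2}|v|_{1,K'}$ for an appropriate $K'\subset\omega_K$. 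After multiplying by the factor $h_K^{-1}$ in the $d=2$ case (resp.\ $h_K^{-1/2}$ in $d=3$) appearing in \cref{prop:norm_equivalence}, this yields the required scaling. The gradient values $\partial_{\bn_{\rv,i}}(I_K^{\SZ}v)(\rv) = (\Pi_{m-1}^{0,\sigma_{\rv,i}}\partial_{\bn_{\rv,i}}v)(\rv)$ are treated in exactly the same way, with one extra derivative throughout, so they contribute $h_K^{-d/2}|v|_{1,\omega_K} + h_K^{1-d/2}|v|_{2,\omega_K}$ before multiplication by the appropriate vertex-scaling factor in \cref{prop:norm_equivalence}.

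Collecting all contributions and keeping only the worst power of $h_K$ multiplying each seminorm $|v|_{i,\omega_K}$, $i=0,1,2$, produces precisely $\sum_{i=0}^{2} h_K^{i-2}|v|_{i,\omega_K}$, which is the claim. The main obstacle I expect is purely bookkeeping: one must check carefully that every scaling factor in the norm-equivalence inequality of \cref{prop:norm_equivalence} combines correctly with the scaling of the trace inequality and of the polynomial inverse estimate (which differ between $d=2$ and $d=3$), and that every set over which a norm of $v$ or of $\nabla v$ is taken is indeed contained in $\omega_K$, for which it suffices that the sides $\sigma_{\rv,i}$ and $\sigma_{e,i}$ are chosen among the facets of elements touching~$K$.
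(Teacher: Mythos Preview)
Your proposal is correct and follows essentially the same route as the paper's proof: apply \cref{prop:norm_equivalence} to $I_K^{\SZ}v$, use the defining relations of the interpolant to rewrite each degree of freedom in terms of $v$, and then combine $L^2$-projection stability, trace inequalities, and polynomial inverse estimates to reach the claimed bound. One minor slip: the polynomial inverse estimate on a $(d-1)$-dimensional side $\sigma$ reads $\|p\|_{\infty,\sigma}\lesssim |\sigma|^{-1/2}\|p\|_{0,\sigma}$ in every dimension (with $|\sigma|$ the $(d-1)$-measure), not $|\sigma|^{-1}$ when $d=3$; your stated final vertex-term scaling $h_K^{-d/2}\|v\|_{0,K'}+h_K^{1-d/2}|v|_{1,K'}$ is nevertheless correct.
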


\begin{proof}
    We apply the bounds in \cref{prop:norm_equivalence} to $I_K^{\SZ} v$ and estimate in turns all  terms in the right-hand side of such bounds. One has
    \begin{equation*}
        \|\Pi_{m-4}^{0,K} I_K^{\SZ} v\|_{0, K}
        = \|\Pi_{m-4}^{0, K} v\|_{0, K}
        \leq \|v\|_{0, K}
        \leq \|v\|_{0, \omega_K},
    \end{equation*}
    where the equality follows from the definition of $I_K^{\SZ} v$. Similarly, for any $\sigma \in \Sigma_K$, using the inverse trace inequality,
    \begin{align*}
        \|\Pi_{m-4}^{0,\sigma} I_K^{\SZ} v\|_{0, \sigma}
        &= \|\Pi_{m-4}^{0,\sigma} v\|_{0, \sigma}
        \leq \|v\|_{0, \sigma}
        \lesssim h_K^{-1/2} \|v\|_{0, \omega_K} + h_K^{1/2} |v|_{1, \omega_K}, \\
        \|\Pi_{m-3}^{0,\sigma} \partial_{\bn_\sigma} I_K^{\SZ} v\|_{0, \sigma}
        &= \|\Pi_{m-3}^{0,\sigma} \partial_{\bn_\sigma} v\|_{0, \sigma}
        \leq \|\partial_{\bn_\sigma} v\|_{0, \sigma}
        \lesssim h_K^{-1/2} |v|_{1, \omega_K} + h_K^{1/2} |v|_{2, \omega_K}.
    \end{align*}
    For any $\rv \in \cV_K$,
    \begin{align*}
        |(I_K^{\SZ} v)(\rv)|
        &= |(\Pi_m^{0, \sigma_\rv} v)(\rv)|
        \lesssim h_K^{-(d-1)/2} \|\Pi_m^{0, \sigma_\rv} v\|_{0, \sigma_\rv}
        \lesssim h_K^{-(d-1)/2} \|v\|_{0, \sigma_\rv} \\
        &\lesssim h_K^{-d/2} \|v\|_{0, \omega_K}, + h_K^{-(d-2)/2} |v|_{1, \omega_K},
    \end{align*}
    where we used successive trace and polynomial inverse inequalities for the first inequality. Similarly, for $i=1,d$,
    \begin{align*}
        |(\partial_{\bn_{\rv,i}} I_K^{\SZ} v)(\rv)|
        &= |(\Pi_{m-1}^{0, \sigma_{\rv,i}} \partial_{\bn_{\rv,i}} v)(\rv)|
        \lesssim h_K^{-(d-1)/2} \|\Pi_{m-1}^{0, \sigma_{\rv,i}} \partial_{\bn_{\rv,i}} v\|_{0, \sigma_{\rv,i}} \\
        &\lesssim h_K^{-d/2} |v|_{1, \omega_K}, + h_K^{-(d-2)/2} |v|_{2, \omega_K}.
    \end{align*}
    Finally, if $d=3$, we prove similarly that for $e \in \cE_K$ and $i=1,2$,
    \begin{align*}
        \|\Pi_{m-4}^{0,e} I_K^{\SZ} v\|_{0, e}
        &\lesssim h_K^{-1} \|v\|_{0, \omega_K} + |v|_{1, \omega_K}, \\
        \|\Pi_{m-3}^{0,e} \partial_{\bn_{e,i}} I_K^{\SZ} v\|_{0, e}
        &\lesssim h_K^{-1} |v|_{1, \omega_K} + |v|_{2, \omega_K}.
    \end{align*}
    This concludes the proof.
\end{proof}

By a Bramble-Hilbert argument, we deduce the following interpolation error estimate.

\begin{theorem}[Scott-Zhang interpolation estimate]
    \label{thm:interp_sz}
    Let $v \in H^2(\Omega)$. For any $K \in \cT_h$ and $1 \leq s \leq m$, if $v \in H^{s+1}(\omega_K)$, then
    \begin{equation*}
        \|v - I_K^{\SZ} v\|_{2,K} \lesssim h_K^{s-1} |v|_{s+1, \omega_K}.
    \end{equation*}
\end{theorem}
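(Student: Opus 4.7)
The plan is to follow the classical Scott--Zhang blueprint, combining a polynomial preservation property with the local stability bound \cref{lemma:interp_stability_partial_sz} and closing via a Bramble--Hilbert argument on the patch $\omega_K$. The three ingredients are independent, so I would organize the proof accordingly.

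\emph{Step 1: polynomial preservation.} I would first show that for every $p \in \PP_m(\omega_K)$ one has $I_K^{\SZ} p = p$ on $K$. Since $p|_\sigma \in \PP_m(\sigma)$ for each side $\sigma \in \Sigma_K$ (and each edge $e \in \cE_K$ when $d=3$), every $L^2$ projection $\Pi_k^{0,\sigma}$ appearing in the defining conditions with $k \leq m$ acts as the identity on $p$, and likewise the vertex evaluations $p(\rv)$ and $(\partial_{\bn_{\rv,i}} p)(\rv)$ are reproduced verbatim by their projected counterparts, because restrictions of polynomials of degree at most $m$ to lower-dimensional facets remain polynomials of degree at most $m$. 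Consequently $p$ itself satisfies every defining condition of $I_K^{\SZ} p$; uniqueness of the degrees of freedom then forces $I_K^{\SZ} p = p$ on $K$.

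\emph{Step 2: Bramble--Hilbert on the patch.} By the shape regularity assumptions of \cref{subsec:scheme}, the patch $\omega_K$ is connected and a union of finitely many uniformly star-shaped pieces of diameter comparable to $h_K$. Standard Dupont--Scott/Bramble--Hilbert theory then delivers, for any $1 \leq s \leq m$, a polynomial $p \in \PP_s(\omega_K) \subset \PP_m(\omega_K)$ satisfying
\begin{equation*}
    |v - p|_{i, \omega_K} \lesssim h_K^{s+1-i} |v|_{s+1, \omega_K}, \quad 0 \leq i \leq s+1.
\end{equation*}

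\emph{Step 3: assembly.} Using polynomial preservation I write $v - I_K^{\SZ} v = (v - p) - I_K^{\SZ}(v - p)$ on $K$, apply the triangle inequality, and invoke \cref{lemma:interp_stability_partial_sz} to bound
\begin{equation*}
    \|v - I_K^{\SZ} v\|_{2,K} \leq \|v - p\|_{2, K} + \|I_K^{\SZ}(v - p)\|_{2, K} \lesssim \sum_{i=0}^{2} h_K^{i-2} |v - p|_{i, \omega_K}.
\end{equation*}
Plugging in the Bramble--Hilbert bounds from Step~2 collapses the sum to $h_K^{s-1} |v|_{s+1, \omega_K}$, which is the desired estimate.

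The only conceptually delicate point is Step~1, where one must ensure that \emph{every} building block of the definition of $I_K^{\SZ}$ --- both the projections at the cell, face, edge and side level, and the point evaluations reconstructed from auxiliary side projections $\Pi_m^{0, \sigma_{\rv,0}}$ and $\Pi_{m-1}^{0, \sigma_{\rv,i}}$ --- is exact on $\PP_m$. This is built into the construction because all target projection spaces have degree at most $m$, but it is the step that deserves the most care. Steps~2 and~3 are routine, as is the dependence of hidden constants on $d$, $\rho$, $\eta$ and $m$, which is implicit in the Bramble--Hilbert argument and the shape regularity of $\omega_K$.
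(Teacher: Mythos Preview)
Your proof is correct and follows essentially the same approach as the paper: polynomial preservation of $I_K^{\SZ}$ on $\PP_m(\omega_K)$, triangle inequality, the stability bound \cref{lemma:interp_stability_partial_sz}, and a Bramble--Hilbert/Scott--Dupont step on the patch. The only cosmetic difference is that the paper picks the specific polynomial $p = \Pi_m^{0,\omega_K} v$ rather than an abstract Bramble--Hilbert polynomial, but the logic is identical.
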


\begin{proof}
    By construction, one has $I_K^{\SZ} p = p$ for any $p \in \PP_m(\omega_K)$, and in particular for $p = \Pi_m^{0, \omega_K} v$. Then
    \begin{align*}
        \|v - I_K^{\SZ} v\|_{2, K}
        &\leq \|v - \Pi_m^{0, \omega_K} v\|_{2, K} + \|\Pi_m^{0, \omega_K} v - I_K^{\SZ} v\|_{2, K} \\
        &= \|v - \Pi_m^{0, \omega_K} v\|_{2, K} + \|I_K^{\SZ} (v - \Pi_m^{0, \omega_K} v)\|_{2, K} \\
        &\lesssim \sum_{i=0}^2 h_K^{i-2} \|v - \Pi_m^{0, \omega_K} v\|_{i, \omega_K},
    \end{align*}
    where in the last inequality we used \cref{lemma:interp_stability_partial_sz}, as well as the fact that $\|v - \Pi_m^{0, \omega_K} v\|_{2, K}$ is controlled by the term $\|v - \Pi_m^{0, \omega_K} v\|_{2, \omega_K}$. We conclude using classical Scott-Dupont theory\cite{brenner2008}.
\end{proof}

Let us mention that \cref{lemma:interp_stability_partial_sz} can be improved as follows once we have \cref{thm:interp_sz}:

\begin{corollary}[Scott-Zhang local stability]
    \label{coro:interp_stability_full_sz}
    For any $v \in H^2(\Omega)$ and $K \in \cT_h$, one has $\|I_K^{\SZ} v\|_{2,K} \lesssim \|v\|_{2, \omega_K}$.
\end{corollary}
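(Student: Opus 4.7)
The plan is to upgrade the intermediate stability bound from Lemma~\ref{lemma:interp_stability_partial_sz} into a bound that only involves the full $H^2$ norm on $\omega_K$, by using the already established interpolation error estimate of Theorem~\ref{thm:interp_sz} together with a simple triangle inequality. Since Theorem~\ref{thm:interp_sz} has just been proved, this is a standard ``bootstrap'' step, and I expect no real obstacle.

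The proof proceeds as follows. First, I would apply the triangle inequality to write
\begin{equation*}
    \|I_K^{\SZ} v\|_{2,K} \leq \|v\|_{2,K} + \|v - I_K^{\SZ} v\|_{2,K}.
\end{equation*}
Since $v \in H^2(\Omega)$, in particular $v \in H^2(\omega_K)$, so Theorem~\ref{thm:interp_sz} applied with $s = 1$ yields
\begin{equation*}
    \|v - I_K^{\SZ} v\|_{2,K} \lesssim |v|_{2, \omega_K}.
\end{equation*}
Combining with the trivial bound $\|v\|_{2,K} \leq \|v\|_{2, \omega_K}$ gives the claimed estimate $\|I_K^{\SZ} v\|_{2,K} \lesssim \|v\|_{2, \omega_K}$.

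The only subtlety is ensuring that Theorem~\ref{thm:interp_sz} can indeed be invoked here without circularity: its proof relies on Lemma~\ref{lemma:interp_stability_partial_sz} rather than on this corollary, so there is no logical loop. The improvement over the intermediate bound comes from the fact that the polynomial-preserving property $I_K^{\SZ} p = p$ for $p \in \PP_m(\omega_K)$ has already been exploited inside the proof of Theorem~\ref{thm:interp_sz}, effectively removing the lower-order terms $h_K^{-2} \|v\|_{0, \omega_K}$ and $h_K^{-1} |v|_{1, \omega_K}$ that appeared in Lemma~\ref{lemma:interp_stability_partial_sz}.
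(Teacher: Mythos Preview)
Your proof is correct and follows exactly the same route as the paper: the triangle inequality $\|I_K^{\SZ} v\|_{2,K} \leq \|v\|_{2,K} + \|v - I_K^{\SZ} v\|_{2,K}$, followed by Theorem~\ref{thm:interp_sz} with $s=1$. Your remark on non-circularity is apt and matches the logical structure of the paper.
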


\begin{proof}
    We use the triangle inequality $\|I_K^{\SZ} v\|_{2, K} \leq \|v\|_{2, K} + \|v - I_K^{\SZ} v\|_{2, K}$, and conclude by applying \cref{thm:interp_sz} with $s=1$.
\end{proof}

\subsubsection{Lagrange type interpolation}
\label{subsubsec:h2vem_lagrange}

Let us now provide some discussion of Lagrange type interpolation. In particular, in view of justifying our numerical results in \cref{sec:numerics}, we need to consider Lagrange interpolation in the minimal regularity case. This will lead to an error estimate involving both an $H^2$ and a $W^{1, \infty}$ term, see \cref{thm:interp_lagrange} below. This is unusual, but natural: $H^2$ is required because we are estimating an error in the $H^2$ norm, and $W^{1, \infty}$ (and even $C^1$) is required because Lagrange interpolation in our setting relies on pointwise evaluations of the gradient. Let us stress out that these considerations about minimal regularity are not specific to the virtual element method: in the simple setting of Lagrange interpolation for $C^0$ conforming finite elements, it would similarly be natural to estimate the interpolation error in a mix of the $H^1$ and $L^\infty$ norms.

For any $v \in H^2(\Omega) \cap C^1(\overline \Omega)$ and $K \in \cT_h$, we define $I_K^{\Lagrange} v \in V_{h, K}$ as the unique function in $V_{h, K}$ sharing with $v$ all the degrees of freedom defined in \cref{def:2Ddof} (or \cref{def:3Ddof} if $d=3$). We then define $I_h^{\Lagrange} v \in V_h$ by $(I_h^{\Lagrange} v)|_K = I_K^{\Lagrange} v$ in every $K \in \cT_h$.

Mimicking the analysis of Scott-Zhang interpolation, we start by proving the following lemma, and then we state our main interpolation estimate.

\begin{lemma}[intermediate Lagrange stability]
    \label{lemma:interp_stability_partial_lagrange}
    For any $v \in H^2(\Omega) \cap C^1(\overline \Omega)$ and $K \in \cT_h$, one has
    \begin{equation*}
        \|I_K^{\Lagrange} v\|_{2,K} \lesssim  \sum_{i=0}^1 h_K^{d/2+i-2} |v|_{i, \infty, K}.
    \end{equation*}
\end{lemma}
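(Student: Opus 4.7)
The plan is to apply the discrete stability estimate of \cref{prop:norm_equivalence} directly to $I_K^{\Lagrange} v \in V_{h,K}$ and then to control each term in the resulting bound by a suitable $L^\infty$ norm of $v$ or $\nabla v$ on $K$. This mimics the strategy used in \cref{lemma:interp_stability_partial_sz}, the only qualitative difference being that here pointwise evaluations of $v$ and $\nabla v$ are available by assumption, so we no longer need to invoke the inverse trace inequality to pass from volume/face integrals to vertex values: instead, every quantity appearing on the right-hand side of \cref{prop:norm_equivalence} will be estimated by an $L^\infty$ bound on the corresponding patch.

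First, I would record the fundamental identities obtained from the definition of $I_K^{\Lagrange} v$: since this function shares every degree of freedom with $v$, one has $(I_K^{\Lagrange} v)(\rv) = v(\rv)$ and $\partial_{\bn_{\rv,i}}(I_K^{\Lagrange} v)(\rv) = \partial_{\bn_{\rv,i}} v(\rv)$ at every vertex, together with the projection identities $\Pi_{m-4}^{0,K} I_K^{\Lagrange} v = \Pi_{m-4}^{0,K} v$, $\Pi_{m-4}^{0,\sigma} I_K^{\Lagrange} v = \Pi_{m-4}^{0,\sigma} v$, and $\Pi_{m-3}^{0,\sigma} \partial_{\bn_\sigma} I_K^{\Lagrange} v = \Pi_{m-3}^{0,\sigma} \partial_{\bn_\sigma} v$ on every side $\sigma \in \Sigma_K$ (and, if $d=3$, the analogous identities on every edge $e \in \cE_K$). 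These are immediate consequences of the choice of degrees of freedom in \cref{def:2Ddof,def:3Ddof} combined with the defining property of $I_K^{\Lagrange} v$.

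Next, I would bound each projected quantity by the corresponding $L^\infty$ norm using only that $\|\Pi_k^{0,\omega} w\|_{0,\omega} \leq \|w\|_{0,\omega} \leq |\omega|^{1/2} \|w\|_{0,\infty,\omega}$, with $\omega$ being either $K$, a side $\sigma \in \Sigma_K$, or (for $d=3$) an edge $e \in \cE_K$. Combined with the shape-regularity estimates $|K| \eqsim h_K^d$, $|\sigma| \eqsim h_K^{d-1}$, and $|e| \eqsim h_K$, and with the trivial bounds $|v(\rv)| \leq |v|_{0,\infty,K}$ and $|\partial_{\bn_{\rv,i}} v(\rv)| \leq |v|_{1,\infty,K}$, each term in the right-hand side of \cref{prop:norm_equivalence} is seen to contribute exactly one of the two scales $h_K^{d/2-2} |v|_{0,\infty,K}$ or $h_K^{d/2-1} |v|_{1,\infty,K}$. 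For instance, in the $d=3$ case the face term $h_K^{-3/2} \|\Pi_{m-4}^{0,F} v\|_{0,F}$ is bounded by $h_K^{-3/2} \cdot h_K^{(d-1)/2} |v|_{0,\infty,K} = h_K^{d/2-2} |v|_{0,\infty,K}$, and the vertex gradient term $h_K^{1/2} |\partial_{\bn_{\rv,i}} v(\rv)|$ is already of the form $h_K^{d/2-1} |v|_{1,\infty,K}$. Summing over all contributions gives the claimed estimate.

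I do not expect any genuine obstacle: the argument is a scaling bookkeeping exercise, and the two powers $h_K^{d/2-2}$ and $h_K^{d/2+1-2}$ appear automatically from the $h$-dependent prefactors of \cref{prop:norm_equivalence} combined with the measure of the patch on which the degree of freedom lives. The only point requiring some care is making sure that every bracketed term in the two displayed bounds of \cref{prop:norm_equivalence} (covering separately $d=2$ and $d=3$) is indeed controlled by either $h_K^{d/2-2}|v|_{0,\infty,K}$ or $h_K^{d/2-1}|v|_{1,\infty,K}$, which is why the sum in the statement terminates at $i=1$ rather than $i=2$: no second-order information about $v$ is ever needed because the degrees of freedom of $V_{h,K}$ only probe $v$ and its first derivatives pointwise or through moments.
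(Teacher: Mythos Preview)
Your proposal is correct and follows essentially the same approach as the paper, which merely sketches the argument by pointing to \cref{lemma:interp_stability_partial_sz} and noting that $L^\infty$-based bounds replace the $L^2$-based trace inequalities. Your explicit verification that every term in \cref{prop:norm_equivalence} scales as either $h_K^{d/2-2}|v|_{0,\infty,K}$ or $h_K^{d/2-1}|v|_{1,\infty,K}$ is exactly the bookkeeping the paper omits.
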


\begin{proof}
    We do not give the full details, since the proof follows the sketch of the one of \cref{lemma:interp_stability_partial_sz}. The main difference is that one cannot use $L^2$ based trace inequalities, which would require more differentiability of $v$ than we want to assume, so we rely instead on their $L^\infty$ based counterparts.
\end{proof}

\begin{theorem}[Lagrange interpolation with minimal regularity]
    \label{thm:interp_lagrange}
    Let $v \in H^2(\Omega) \cap C^1(\overline \Omega)$. For any $K \in \cT_h$ and $1 \leq s \leq m$, $0 \leq r \leq m$, if $v \in H^{s+1}(K) \cap W^{r+1,\infty}(K)$, then
    \begin{equation*}
        \|v - I_K^{\Lagrange} v\|_{2, K} \lesssim h_K^{s-1} |v|_{s+1, K} + h_K^{d/2+r-1} |v|_{r+1, \infty, K}.
    \end{equation*}
\end{theorem}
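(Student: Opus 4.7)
The plan is to mirror the proof of \cref{thm:interp_sz} with two key modifications that accommodate the mixed-norm nature of the estimate. First, I exploit the polynomial preservation property $I_K^{\Lagrange} p = p$ for any $p \in \PP_m(K) \subset V_{h,K}$, which follows immediately from the fact that $I_K^{\Lagrange} v$ is determined by degrees of freedom that uniquely identify polynomials in $V_{h,K}$. Then, by the triangle inequality,
\begin{equation*}
    \|v - I_K^{\Lagrange} v\|_{2, K}
    \leq \|v - p\|_{2, K} + \|I_K^{\Lagrange}(v - p)\|_{2, K},
\end{equation*}
and the second term is bounded using the intermediate Lagrange stability (\cref{lemma:interp_stability_partial_lagrange}) applied to $v - p$, which lies in $H^2(\Omega) \cap C^1(\overline \Omega)$ when $p$ is a polynomial.

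The crucial choice is the polynomial $p$: I take $p$ to be the averaged Taylor polynomial of degree $k := \max(s, r) \leq m$ over a ball with respect to which $K$ is star-shaped (the shape regularity assumption guarantees existence of such a ball). By classical Scott-Dupont / Bramble-Hilbert theory, this single polynomial $p$ satisfies the twin estimates
\begin{equation*}
    \|v - p\|_{2, K} \lesssim h_K^{s-1} |v|_{s+1, K},
    \qquad
    |v - p|_{i, \infty, K} \lesssim h_K^{r+1-i} |v|_{r+1, \infty, K}
    \quad (i = 0, 1),
\end{equation*}
because $p$ reproduces all polynomials of degree $\leq k$ and hence both $\PP_s$ and $\PP_r$. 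The first bound directly yields the $h_K^{s-1} |v|_{s+1,K}$ term, while substituting the second bound into \cref{lemma:interp_stability_partial_lagrange} gives
\begin{equation*}
    \|I_K^{\Lagrange}(v - p)\|_{2, K}
    \lesssim \sum_{i=0}^1 h_K^{d/2 + i - 2} \cdot h_K^{r+1-i} |v|_{r+1, \infty, K}
    = h_K^{d/2 + r - 1} |v|_{r+1, \infty, K},
\end{equation*}
which is exactly the second term in the desired estimate.

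The only mild subtlety, and the main thing to get right, is that the two regularity indices $s$ and $r$ can be chosen independently, so a polynomial of degree just $s$ (as in the pure $H^{s+1}$ setting of \cref{thm:interp_sz}) would not give the sharp $L^\infty$ bound when $r > s$, and conversely; selecting $p$ of degree $\max(s,r)$ and invoking Scott-Dupont separately for each norm resolves this without any extra technicalities. Once these bounds are in place, the proof concludes exactly as in \cref{thm:interp_sz}.
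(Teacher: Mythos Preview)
Your proof is correct and follows essentially the same approach as the paper's: triangle inequality against a polynomial $p$ with $I_K^{\Lagrange} p = p$, then the intermediate Lagrange stability bound (\cref{lemma:interp_stability_partial_lagrange}) on $I_K^{\Lagrange}(v-p)$, and finally Scott--Dupont/Bramble--Hilbert estimates. The only difference is the choice of $p$: the paper takes $p = \Pi_m^{0,K} v$ (the $L^2$ projection onto $\PP_m(K)$), which is always of degree $m \geq \max(s,r)$ and thereby sidesteps the ``mild subtlety'' you identified without needing to track the two regularity indices separately; your averaged Taylor polynomial of degree $\max(s,r)$ achieves the same effect by a more explicit (and minimal-degree) construction.
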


\begin{proof}
    By construction, one has $I_K^{\Lagrange} p = p$ for any $p \in \PP_m(K)$, and in particular for $p = \Pi_m^{0, K} v$. Then
    \begin{align*}
        \|v - I_K^{\Lagrange} v\|_{2, K}
        &\leq \|v - \Pi_m^{0, K} v\|_{2, K} + \|\Pi_m^{0, K} v - I_K^{\Lagrange} v\|_{2, K} \\
        &= \|v - \Pi_m^{0, K} v\|_{2, K} + \|I_K^{\Lagrange} (v - \Pi_m^{0, K} v)\|_{2, K} \\
        &\lesssim \|v - \Pi_m^{0, K} v\|_{2, K} + \sum_{i=0}^1 h_K^{d/2+i-2} |v - \Pi_m^{0, K} v|_{i, \infty, K},
    \end{align*}
    where we used \cref{lemma:interp_stability_partial_sz} in the last inequality. We conclude using classical Scott-Dupont theory\cite{brenner2008}.
\end{proof}

Similarly to the Scott-Zhang case, \cref{lemma:interp_stability_partial_lagrange} can now be improved as follows.

\begin{corollary}[Lagrange local stability]
    \label{coro:interp_stability_full_lagrange}
    For any $v \in H^2(\Omega) \cap C^1(\overline \Omega)$ and $K \in \cT_h$, one has $\|I_K^{\Lagrange} v\|_{2,K} \lesssim \|v\|_{2, K} + h_K^{d/2-1} |v|_{1, \infty, K}$.
\end{corollary}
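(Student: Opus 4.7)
The plan is to follow the same strategy as in the proof of \cref{coro:interp_stability_full_sz}, namely to apply the triangle inequality
\[
\|I_K^{\Lagrange} v\|_{2,K} \leq \|v\|_{2,K} + \|v - I_K^{\Lagrange} v\|_{2,K}
\]
and then estimate the error term using the interpolation bound from \cref{thm:interp_lagrange}. The only (very minor) subtlety is choosing the parameters $(s,r)$ correctly: since the target stability estimate involves $\|v\|_{2,K}$ and $h_K^{d/2-1}|v|_{1,\infty,K}$, we should take $s=1$ (which yields $h_K^{s-1}|v|_{s+1,K} = |v|_{2,K} \leq \|v\|_{2,K}$) and $r=0$ (which yields $h_K^{d/2+r-1}|v|_{r+1,\infty,K} = h_K^{d/2-1}|v|_{1,\infty,K}$), both of which are admissible under the constraints $1\le s\le m$ and $0\le r\le m$.

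With these choices, \cref{thm:interp_lagrange} gives $\|v-I_K^{\Lagrange}v\|_{2,K} \lesssim |v|_{2,K} + h_K^{d/2-1}|v|_{1,\infty,K}$, and combining this with the triangle inequality and absorbing $|v|_{2,K}$ into $\|v\|_{2,K}$ concludes the proof. There is no real obstacle here: the statement is a direct corollary, exactly parallel to \cref{coro:interp_stability_full_sz}, and the only work is to track the slightly more complex two-term right-hand side coming from the mixed $H^2$/$W^{1,\infty}$ nature of the Lagrange estimate.
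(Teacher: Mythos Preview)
Your proposal is correct and matches the paper's proof exactly: the paper also applies the triangle inequality $\|I_K^{\Lagrange} v\|_{2,K} \leq \|v\|_{2,K} + \|v - I_K^{\Lagrange} v\|_{2,K}$ and then invokes \cref{thm:interp_lagrange} with $s=1$ and $r=0$.
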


\begin{proof}
    We use the triangle inequality $\|I_K^{\Lagrange} v\|_{2, K} \leq \|v\|_{2, K} + \|v - I_K^{\Lagrange} v\|_{2, K}$, and conclude by applying \cref{thm:interp_lagrange} with $s=1$ and $r=0$.
\end{proof}

Let us also state the following simplification of \cref{lemma:interp_stability_partial_lagrange} when $v \in W^{2, \infty}(K)$.

\begin{corollary}[Lagrange interpolation with $W^{2, \infty}$ regularity]
    \label{coro:interp_lagrange_simple}
    Let $v \in H^2(\Omega) \cap C^1(\overline \Omega)$. For any $K \in \cT_h$ and $1 \leq s \leq m$, if $v \in W^{s+1,\infty}(K)$, then
    \begin{equation*}
        \|v - I_K^{\Lagrange} v\|_{2, K} \lesssim h_K^{d/2+s-1} |v|_{s+1, \infty, K}.
    \end{equation*}
\end{corollary}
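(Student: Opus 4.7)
The plan is to apply Theorem~\ref{thm:interp_lagrange} with the choice $r = s$ and then absorb the $H^{s+1}$ seminorm into the $W^{s+1,\infty}$ seminorm by a trivial volume estimate. Specifically, applying that theorem yields
\begin{equation*}
    \|v - I_K^{\Lagrange} v\|_{2, K} \lesssim h_K^{s-1} |v|_{s+1, K} + h_K^{d/2+s-1} |v|_{s+1, \infty, K}.
\end{equation*}
The key (and only) observation needed is that, since $v \in W^{s+1,\infty}(K)$, one has
\begin{equation*}
    |v|_{s+1, K} \leq |K|^{1/2} |v|_{s+1, \infty, K} \lesssim h_K^{d/2} |v|_{s+1, \infty, K},
\end{equation*}
where the second bound uses the shape-regularity of $K$ to control $|K|$ by $h_K^d$ up to a multiplicative constant depending only on $d$ and $\rho$. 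Substituting this bound into the first term on the right-hand side above gives $h_K^{s-1} \cdot h_K^{d/2} |v|_{s+1,\infty,K} = h_K^{d/2+s-1} |v|_{s+1,\infty,K}$, which is of exactly the same form as the second term, so that the two contributions combine into the claimed single term.

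There is no real obstacle here; the argument is just a one-line application of Theorem~\ref{thm:interp_lagrange} together with the elementary embedding $L^\infty(K) \hookrightarrow L^2(K)$ with the explicit volume factor. The only minor point to note is that the constraint $0 \leq r \leq m$ in Theorem~\ref{thm:interp_lagrange} is compatible with the choice $r = s$ since $1 \leq s \leq m$ is assumed.
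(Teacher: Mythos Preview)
Your proposal is correct and is precisely the intended argument: the paper states this corollary without proof, presenting it as a direct simplification following Theorem~\ref{thm:interp_lagrange}, and your application of that theorem with $r=s$ together with the embedding $|v|_{s+1,K}\le |K|^{1/2}|v|_{s+1,\infty,K}\lesssim h_K^{d/2}|v|_{s+1,\infty,K}$ is exactly how the two terms collapse into one.
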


\section{Rates of convergence}
\label{sec:h2vem_rates}

In this section, we show how the error estimates derived in \cref{subsec:error_estimate} and \cref{sec:quadrature} can be combined with the interpolation results in \cref{subsec:h2vem_global_interp} in order to derive rates of convergence for the schemes \cref{eq:scheme,eq:quadr_scheme}. Throughout the section, we assume that we are working in the concrete VEM framework defined in \cref{sec:h2vem}.

As explained in \cref{rem:err_to_proj}, two different notions of error that make sense in the VEM setting are $\|u - u_h\|_2$ and $\|u - \Pi_m^* u_h\|_{2, h}$, so we state our results for the all-encompassing error $\|u - u_h\|_2 + \|u - \Pi_m^* u_h\|_{2, h}$.

Combining \cref{prop:err_to_proj} and \cref{thm:error_estimate,thm:interp_sz}, we can prove the following optimal error bound for the scheme \cref{eq:scheme}.

\begin{theorem}[rate of convergence]
    \label{thm:convergence_rate}
    Assume that $A$ satisfies the $\mu$-Cordes condition for some $0 \leq \mu < 1$, that $\gamma$ is a $\mu$-admissible scaling of $A$, that $\mu > c_*$ in \cref{eq:stab_scaling}, and that $g_h = I_h^{\SZ} g$. Let $u \in V$ and $u_h \in V_h$ denote respectively the unique solutions to \cref{eq:var} and \cref{eq:scheme}. If $u \in H^{s+1}(\Omega)$ for some $1 \leq s \leq m$, then
    \begin{equation*}
        \|u - u_h\|_2 + \|u - \Pi_m^* u_h\|_{2, h}
        \lesssim \frac{c_\pi c^*}{c_* - \mu} h^{s-1} |u|_{s+1}.
    \end{equation*}
\end{theorem}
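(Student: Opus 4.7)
The plan is to combine the abstract error estimate of \cref{thm:error_estimate} with the Scott–Zhang interpolation estimate of \cref{thm:interp_sz} and the computable approximation bound of \cref{prop:err_to_proj}. (I read the hypothesis ``$\mu > c_*$'' in the theorem statement as a typo for ``$c_* > \mu$'', the condition required by \cref{thm:error_estimate,prop:err_to_proj}.)

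First I would choose the interpolant $u_I := I_h^{\SZ} u \in V_h$. Since $g_I = I_h^{\SZ} g$ by hypothesis and $I_h^{\SZ}$ satisfies property \cref{eq:interp_boundary_dependence}, the boundary values of $u_I$ agree with those of $g_I$, so $u_I \in V_h^{g_I}$. \Cref{thm:error_estimate} then yields
\begin{equation*}
    \|u - u_h\|_2 \lesssim \frac{c^*}{c_* - \mu} \bigl( \|u - I_h^{\SZ} u\|_2 + |u - \Pi_m^0 u|_{2,h} \bigr).
\end{equation*}
Next, I would bound the two terms on the right-hand side. Applying \cref{thm:interp_sz} on each $K \in \cT_h$ and squaring,
\begin{equation*}
    \|u - I_h^{\SZ} u\|_2^2
    = \sum_{K \in \cT_h} \|u - I_K^{\SZ} u\|_{2,K}^2
    \lesssim \sum_{K \in \cT_h} h_K^{2(s-1)} |u|_{s+1, \omega_K}^2
    \lesssim h^{2(s-1)} |u|_{s+1}^2,
\end{equation*}
where for the last step I rely on the bounded overlap of the patches $\{\omega_K\}_{K \in \cT_h}$, a consequence of the shape-regularity assumptions on $\cT_h$. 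For the projection term, classical Scott–Dupont theory gives $|u - \Pi_m^0 u|_{2, K} \lesssim h_K^{s-1} |u|_{s+1, K}$ on each $K$, hence $|u - \Pi_m^0 u|_{2, h} \lesssim h^{s-1} |u|_{s+1}$.

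Finally, to handle $\|u - \Pi_m^* u_h\|_{2,h}$, I would invoke \cref{prop:err_to_proj}, which yields
\begin{equation*}
    \|u - \Pi_m^* u_h\|_{2, h}
    \lesssim c_\pi \bigl( |u - \Pi_m^0 u|_{2, h} + \|u - u_h\|_2 \bigr),
\end{equation*}
and then feed in the bounds just obtained for the two terms on the right. Summing the two error contributions and absorbing the factor $c_\pi \geq 1$ produces the claimed estimate with prefactor $c_\pi c^* / (c_* - \mu)$.

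No step is genuinely difficult: the only mild subtlety is the bounded-overlap argument for summing the Scott–Zhang patch norms, which is standard under the uniform star-shapedness assumption on $\cT_h$. Everything else amounts to chaining together previously proved results in the natural order.
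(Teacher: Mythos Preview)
Your proposal is correct and follows essentially the same route as the paper's proof: apply \cref{thm:error_estimate} with $u_I = I_h^{\SZ} u$, estimate the two interpolation terms via \cref{thm:interp_sz} and Scott--Dupont theory, and control the projected error through \cref{prop:err_to_proj}. You correctly flag the typo $\mu > c_*$ and supply details (boundary preservation of $I_h^{\SZ}$, bounded patch overlap) that the paper leaves implicit.
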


\begin{proof}
    This follows from using \cref{prop:err_to_proj}, applying the error estimate \cref{eq:error_estimate} with $u_I = I_h^{\SZ} u$, and then using \cref{thm:interp_sz} and classical Scott-Dupont theory\cite{brenner2008} in order to estimate respectively the terms $\|u - I_h^{\SZ} u\|_2$ and $|u - \Pi_m^0 u|_{2, h}$.
\end{proof}

In the case of the scheme \cref{eq:quadr_scheme} involving numerical quadrature, one may similarly deduce the following error bound from \cref{prop:err_to_proj}, \cref{thm:quadr_error_estimate_discontinuous}, and \cref{coro:interp_lagrange_simple}.

\begin{theorem}[rate of convergence, with quadrature]
    \label{thm:quadr_convergence_rate}
    Assume that $A$ satisfies the $\mu$-Cordes condition everywhere for some $0 \leq \mu < 1$, that $\gamma$ is an everywhere $\mu$-admissible scaling of $A$, that $\mu > c_*$ in \cref{eq:stab_scaling}, that $g \in C^1(\overline \Omega)$, and that $g_h = I_h^{\Lagrange} g$. Let $u \in V$ and $u_h \in V_h$ denote respectively the unique solutions to \cref{eq:var} and \cref{eq:scheme}. If there exists an open set $E \subset \Omega$ satisfying \cref{eq:quadr_set_assumption} and if $u \in W^{s+1, \infty}(\cT_h)$ for some $1 \leq s \leq m$, then
    \begin{equation}
        \label{eq:quadr_convergence_rate}
        \|u - u_h\|_2 + \|u - \Pi_m^* u_h\|_{2, h}
        \lesssim \frac{c_\pi c^*}{c_* - \mu} h^{s-1} |u|_{s+1, \infty, h}.
    \end{equation}
\end{theorem}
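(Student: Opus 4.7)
The plan is to chain together three ingredients already established in the paper: the error estimate with quadrature in the discontinuous setting, \cref{thm:quadr_error_estimate_discontinuous}; the lifting of the error to the polynomial projection, \cref{prop:err_to_proj}; and the Lagrange interpolation bound of \cref{coro:interp_lagrange_simple}. Each of the three quantities that appears on the right-hand side will then need to be absorbed into the target rate $h^{s-1}|u|_{s+1,\infty,h}$.

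First I would set $u_I := I_h^{\Lagrange} u$ and check that $u_I \in V_h^{g_I}$. Because the Lagrange degrees of freedom of \cref{def:2Ddof,def:3Ddof} that lie on $\partial\Omega$ — vertex values and gradient components, edge moments, and (when $d=3$) face moments and face normal-derivative moments — depend only on the boundary trace of the function being interpolated, the identity $u|_{\partial\Omega}=g|_{\partial\Omega}$ together with $g_I = I_h^{\Lagrange} g$ gives $u_I|_{\partial\Omega}=g_I|_{\partial\Omega}$. Plugging this $u_I$ into \cref{eq:quadr_error_estimate_discontinuous} bounds $\|u-u_h\|_2$ by the sum of $\|u-u_I\|_2$, $|u-\Pi_m^0 u|_{2,h}$, and the quadrature residual, each multiplied by the advertised constant prefactors; \cref{prop:err_to_proj} then extends the bound to $\|u-\Pi_m^* u_h\|_{2,h}$ at the cost of the factor $c_\pi$.

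Next, each of the three error contributions is estimated by $h^{s-1}|u|_{s+1,\infty,h}$. For the interpolation term, \cref{coro:interp_lagrange_simple} yields elementwise $\|u-I_K^{\Lagrange} u\|_{2,K}^2 \lesssim h_K^{d+2(s-1)}|u|_{s+1,\infty,K}^2$, and summing while using $\sum_{K\in\cT_h} h_K^d \lesssim |\Omega|$ gives $\|u-u_I\|_2 \lesssim h^{s-1}|u|_{s+1,\infty,h}$. For $|u-\Pi_m^0 u|_{2,h}$, classical Scott--Dupont theory delivers $|u-\Pi_m^0 u|_{2,K}\lesssim h_K^{s-1}|u|_{s+1,K}$, and the trivial bound $|u|_{s+1,K}\le h_K^{d/2}|u|_{s+1,\infty,K}$ supplies the same rate after summation. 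For the quadrature residual $(\sum_K |K|\,|u-\Pi_m^0 u|_{2,\infty,E\cap K}^2)^{1/2}$, the analogous Scott--Dupont estimate in $L^\infty$ gives $|u-\Pi_m^0 u|_{2,\infty,E\cap K}\lesssim h_K^{s-1}|u|_{s+1,\infty,K}$, after which $\sum_K |K|\le |\Omega|$ closes the bound.

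The only point I expect to require some care is verifying the boundary compatibility of the Lagrange interpolant. Here it is noticeably simpler than for the Scott--Zhang operator of \cref{subsubsec:h2vem_scott_zhang}, because pointwise evaluations and moments supported on a boundary facet are intrinsic to the trace, so none of the careful choices of sides $\sigma_{\rv,i}$ or $\sigma_{e,i}$ are needed. Once that observation is in place, the three estimates above combine routinely with the hypothesis $c_* > \mu$ to produce \cref{eq:quadr_convergence_rate}.
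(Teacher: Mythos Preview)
Your proposal is correct and follows essentially the same route as the paper: apply \cref{thm:quadr_error_estimate_discontinuous} with $u_I = I_h^{\Lagrange} u$, combine with \cref{prop:err_to_proj}, and then bound the three resulting terms via \cref{coro:interp_lagrange_simple} and Scott--Dupont theory, exactly as in the paper's one-sentence proof. Your only slight imprecision is in the boundary-compatibility discussion: not every boundary degree of freedom (e.g.\ normal-derivative components of $\nabla v(\rv)$ or face normal-derivative moments) depends solely on the trace $v|_{\partial\Omega}$, but those that do not are precisely the ones that do not affect $(I_h^{\Lagrange} v)|_{\partial\Omega}$, so your conclusion $u_I\in V_h^{g_I}$ still holds.
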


\begin{proof}
    This follows from using \cref{prop:err_to_proj}, applying the error estimate \cref{eq:quadr_error_estimate_discontinuous} with $u_I = I_h^{\Lagrange} u$, and then using \cref{coro:interp_lagrange_simple} and classical Scott-Dupont theory in order to estimate respectively the terms $|u - I_h^{\Lagrange} u|_2$ and $|u - \Pi_m^0 u|_{2, h}$, $|u - \Pi_m^0 u|_{2, \infty, E \cap K}$.\iftoggle{author}{}{\\}
\end{proof}

Note that in the above theorem we had to assume $u \in W^{s+1, \infty}(\cT_h)$ rather than $u \in H^{s+1}(\cT_h)$, due to the rightmost term in the estimate \cref{eq:quadr_error_estimate_discontinuous}. One benefit of this assumption is that it implies that $u \in C^1(\overline \Omega)$ and thus allows to use Lagrange rather than Scott-Zhang interpolation, which is why the estimate \cref{eq:quadr_convergence_rate} only involves the regularity of $u$ in a broken norm. However, the assumption that $u \in W^{s+1, \infty}(\cT_h)$ with $s > 1$ may be too strong for some applications. It is possible to prove rates of convergence under weaker assumptions, but one may need to use ad hoc arguments. We give below an estimate that applies to the example in \cref{subsec:numerics_roughdata_roughsol}; see also \cref{subsec:numerics_singular} for an example in which $u \not\in W^{2, \infty}(\Omega)$.

In the estimate below, $u$ is assumed to have regularity $W^{s+1, \infty}$ on some submesh $\cT_h^* \subset \cT_h$, while also having $W^{2, \infty}$ regularity globally. This estimate is intended to be applied when the submesh $\cT_h^*$ is known to cover a large proportion of $\Omega$; for instance, as in \cref{subsec:numerics_roughdata_roughsol}, $\cT_h^*$ could be the set of all cells of $\cT_h$ except for those that intersect with a finite union of one-dimensional curves.

\begin{theorem}[rate of convergence, with quadrature and a rough solution]
    \label{thm:quadr_convergence_rate_alt}
    Assume that $A$ satisfies the $\mu$-Cordes condition everywhere for some $0 \leq \mu < 1$, that $\gamma$ is an everywhere $\mu$-admissible scaling of $A$, that $\mu > c_*$ in \cref{eq:stab_scaling}, that $g \in C^1(\overline \Omega)$, and that $g_h = I_h^{\Lagrange} g$. Let $u \in V$ and $u_h \in V_h$ denote respectively the unique solutions to \cref{eq:var} and \cref{eq:scheme}. Assume that there exists an open set $E \subset \Omega$ satisfying \cref{eq:quadr_set_assumption}, that $u \in W^{2, \infty}(\Omega)$, and that $u \in W^{s+1, \infty}(\cT_h^*)$ for some $1 \leq s \leq m$ and $\cT_h^* \subset \cT_h$. Then
    \begin{equation*}
        \|u - u_h\|_2 + \|u - \Pi_m^* u_h\|_{2, h}
        \lesssim \frac{c_\pi c^*}{c_* - \mu} \left(h^{s-1} |u|_{s+1, \infty, \cT_h^*} + \left(\sum_{K \in \cT_h \setminus \cT_h^*} |K|\right)^{1/2} |u|_{2, \infty}\right).
    \end{equation*}
\end{theorem}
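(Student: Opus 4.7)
The plan is to follow the same outline as Theorem~\ref{thm:quadr_convergence_rate}, but split every cell-sum according to the partition $\cT_h = \cT_h^* \cup (\cT_h \setminus \cT_h^*)$ and apply different local bounds on each piece. First, I would invoke \cref{prop:err_to_proj} to reduce the quantity on the left to $\|u - u_h\|_2 + |u - \Pi_m^0 u|_{2,h}$ (up to the factor $c_\pi$). Then, since $u \in W^{2,\infty}(\Omega) \subset C^1(\overline \Omega)$, I would apply \cref{thm:quadr_error_estimate_discontinuous} with $u_I := I_h^{\Lagrange} u$, which leaves three families of local quantities to bound: $\|u - I_K^{\Lagrange} u\|_{2,K}$, $|u - \Pi_m^0 u|_{2,K}$, and $|u - \Pi_m^0 u|_{2,\infty,E \cap K}$.

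On cells $K \in \cT_h^*$, where the full $W^{s+1,\infty}$ regularity of $u$ is available, I would apply \cref{coro:interp_lagrange_simple} to the interpolation term and classical Scott-Dupont theory to the two projection terms, obtaining bounds of order $h_K^{d/2+s-1}|u|_{s+1,\infty,K}$ and $h_K^{s-1}|u|_{s+1,\infty,K}$ respectively. After squaring and using $h_K^d \lesssim |K|$, the sums collapse to a common factor $h^{s-1}|u|_{s+1,\infty,\cT_h^*}\bigl(\sum_{K \in \cT_h^*}|K|\bigr)^{1/2} \lesssim h^{s-1}|u|_{s+1,\infty,\cT_h^*}$, matching the first term in the claimed estimate.

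On cells $K \in \cT_h \setminus \cT_h^*$, only the global $W^{2,\infty}$ regularity is available, so I would apply \cref{coro:interp_lagrange_simple} and Scott-Dupont theory with $s=1$ to bound $\|u - I_K^{\Lagrange} u\|_{2,K} + |u - \Pi_m^0 u|_{2,K}$ by $h_K^{d/2}|u|_{2,\infty,K} \lesssim |K|^{1/2}|u|_{2,\infty}$. For the quadrature remainder $|u - \Pi_m^0 u|_{2,\infty,E \cap K}$, which has no decay to exploit on bad cells, I would control the projection contribution by a standard inverse inequality $|\Pi_m^0 u|_{2,\infty,K} \lesssim h_K^{-d/2}|\Pi_m^0 u|_{2,K} \lesssim h_K^{-d/2}|u|_{2,K} \lesssim |u|_{2,\infty}$, so that $|u - \Pi_m^0 u|_{2,\infty,E \cap K} \lesssim |u|_{2,\infty}$. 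Squaring, multiplying by $|K|$ and summing over $K \in \cT_h \setminus \cT_h^*$ then yields the factor $|u|_{2,\infty}\bigl(\sum_{K \in \cT_h \setminus \cT_h^*}|K|\bigr)^{1/2}$, matching the second term.

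The key new ingredient compared with \cref{thm:quadr_convergence_rate} is the quadrature remainder on bad cells: since $u$ is no better than $W^{2,\infty}$ there, one cannot extract any approximation order from $|u - \Pi_m^0 u|_{2,\infty,E \cap K}$, and convergence is recovered purely through the fact that the total measure of the exceptional region $\sum_{K \in \cT_h \setminus \cT_h^*}|K|$ is small. Once the good- and bad-cell contributions are added and the triangle inequality is applied, the conclusion follows directly, with the factor $\tfrac{c_\pi c^*}{c_* - \mu}$ arising exactly as in \cref{thm:quadr_convergence_rate}.
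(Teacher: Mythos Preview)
Your proposal is correct and follows essentially the same route as the paper: invoke \cref{prop:err_to_proj}, apply \cref{thm:quadr_error_estimate_discontinuous} with $u_I = I_h^{\Lagrange} u$, split the cell-sums into $\cT_h^*$ and $\cT_h \setminus \cT_h^*$, and use \cref{coro:interp_lagrange_simple} together with Scott--Dupont theory at order $s$ on good cells and at order $1$ on bad cells. The only cosmetic difference is that for the bad-cell bound on $|u - \Pi_m^0 u|_{2,\infty,E \cap K}$ you go through an inverse inequality, whereas the paper simply cites Scott--Dupont theory with $s=1$ to get $|u - \Pi_m^0 u|_{2,\infty,K} \lesssim |u|_{2,\infty,K}$ directly; both arguments are valid.
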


\begin{proof}
    Let $E_h := \|u - u_h\|_2 + \|u - \Pi_m^* u_h\|_{2,h}$. We use \cref{prop:err_to_proj}, apply the error estimate \cref{eq:quadr_error_estimate_discontinuous} with $u_I = I_h^{\Lagrange} u$, and deduce that
    \begin{equation*}
        E_h \lesssim \frac{c_\pi c^*}{c_* - \mu} \left(\sum_{K \in \cT_h} (\|u - I_K^{\Lagrange} u\|_{2, K}^2 + |u - \Pi_m^0 u|_{2, K}^2 + |K|\, |u - \Pi_m^0 u|_{2, \infty, K}^2)\right)^{1/2}.
    \end{equation*}
    Using \cref{coro:interp_lagrange_simple} and classical Scott-Dupont theory in order to estimate respectively the terms $\|u - I_K^{\Lagrange} u\|_{2, K}^2$ and $|u - \Pi_m^0 u|_{2, K}^2$, $|u - \Pi_m^0 u|_{2, \infty, K}^2$,
    \begin{align*}
        E_h
        &\lesssim \frac{c_\pi c^*}{c_* - \mu} \left(\sum_{K \in \cT_h^*} h_K^{d+2s-2} |u|_{s+1, \infty, K}^2 + \sum_{K \in \cT_h \setminus \cT_h^*} h_K^d |u|_{2, \infty, K}^2\right)^{1/2} \\
        &\lesssim \frac{c_\pi c^*}{c_* - \mu} \left(\left(\sum_{K \in \cT_h^*} h_K^d\right)^{1/2} h^{s-1} |u|_{s+1, \infty, K} + \left(\sum_{K \in \cT_h \setminus \cT_h^*} h_K^d\right)^{1/2} |u|_{2, \infty, K}\right).
    \end{align*}
    We conclude using that $h_K^d \lesssim |K|$ and $\sum_{K \in \cT_h^*} h_K^d \lesssim 1$.
\end{proof}

\section{Numerical experiments}
\label{sec:numerics}

In this section, we apply the numerical scheme \cref{eq:quadr_scheme} to problems featuring various regularity properties. These problems originate from or are inspired by the ones in Ref.~\refcite{smears2013}. Our implementation is based on the code vem++\cite{dassi2023}. The sparse linear system resulting from the discretization is solved using a direct solver.

We use virtual element spaces featuring the polynomial consistency order $m = 2$. We use the barycenter quadrature rule: $Q_K[\phi] := |K| \phi(\overline x_K)$, where $\overline x_K$ denotes the barycenter of the cell $K \in \cT_h$. This is consistent with \cref{sec:quadrature}, in which, for the case $m = 2$, the quadrature rule $Q_K$ is only assumed to be exact for constants. We use a Lagrange interpolation of the boundary data: $g_I := I_h^{\Lagrange} g$.

For each considered problem, we compute the broken $H^2$-seminorm, $H^1$-seminorm, and $L^2$-norm errors between $u$ and $\Pi_2^* u_h$, where $u$ and $u_h$ are solution to respectively the continuous problem \cref{eq:var} and its discretization \cref{eq:quadr_scheme}~--- since the definitions of those errors involve integration of the exact solution $u$, in practice we approximate them using high-accuracy (fourth-order) quadrature rules.

We compare the $H^2$-seminorm errors with the predictions of the theory. Observe that we did not prove higher-order error estimates for the error in $H^1$ and $L^2$ norms; this would be nontrivial, since this would require elliptic regularity estimates for the dual equation to \cref{eq:main}. We still observe experimentally consistently higher orders of convergence in those norms.

\subsection{Example 1: rough coefficients and solution}
\label{subsec:numerics_roughdata_roughsol}

\begin{figure}[t!]
    \centering
    \includegraphics[width=\textwidth]{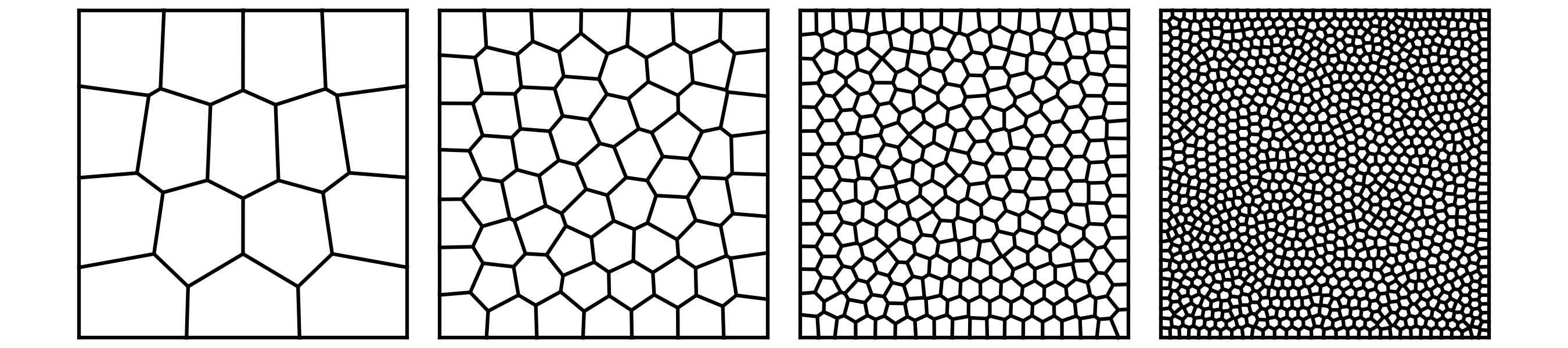}
    \caption{First four randomly generated polygonal meshes, in dimension two.}
    \label{fig:poly_meshes}
    \vspace{10pt}
    \includegraphics[width=\textwidth]{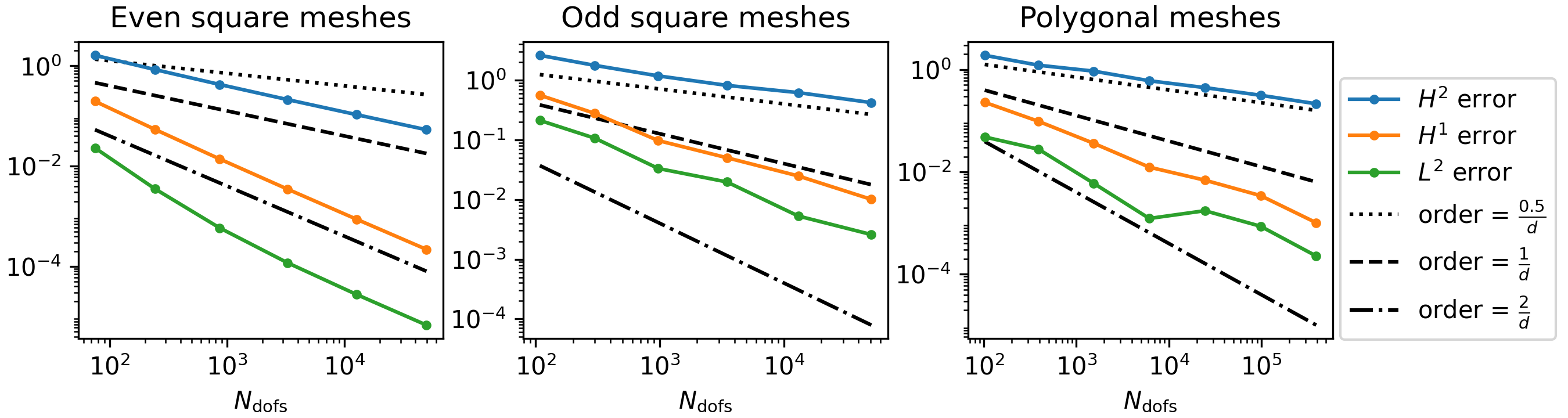}
    \includegraphics[width=\textwidth]{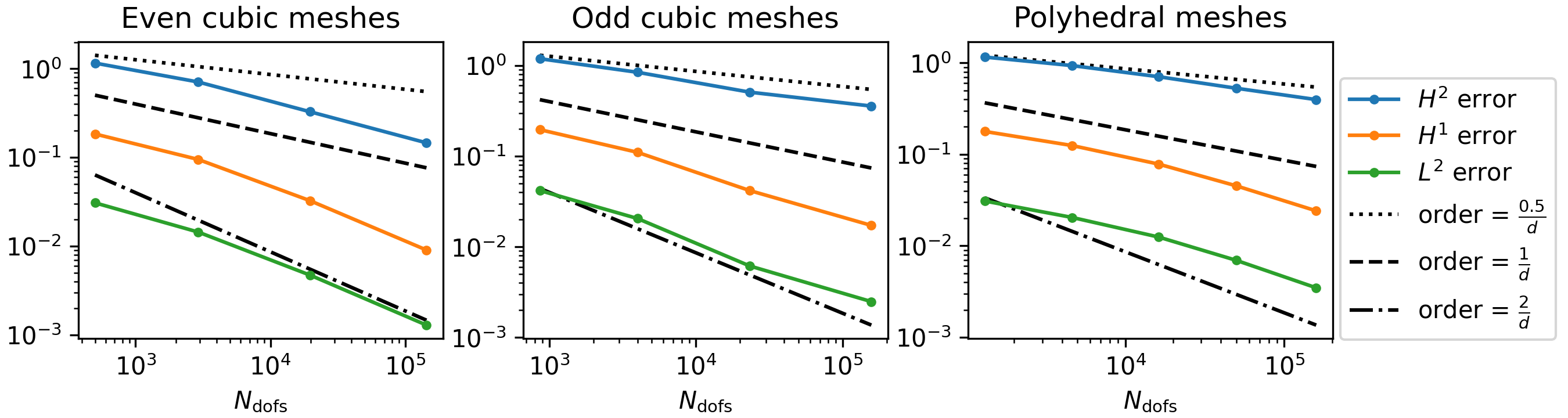}
    \caption{Example 1. Top: dimension two, bottom: dimension three. As expected, in the $H^2$ norm, the optimal rate of convergence is observed in the case of even square and cubic meshes, which are aligned with the discontinuities of $\nabla^2 u$, and convergence of order $1 / (2 d)$ with respect to the number of degrees of freedom is observed in the other cases.}
    \label{fig:errors}
\end{figure}

In dimensions $d \in \{2, 3\}$ and on the domain $\Omega := (-1, 1)^d$, we consider the problem \cref{eq:main} with
\begin{equation}
    \label{eq:pw_cst_coeffs}
    A(x) := \left((1 + (d-1) \delta_{ij}) \sgn_+(x_i) \sgn_+(x_j)\right)_{1 \leq i, j \leq d},
\end{equation}
where
\begin{equation*}
    \sgn_+(t) := \begin{cases}
        1 &\text{if } t \geq 0, \\
        -1 &\text{if } t < 0,
    \end{cases}
\end{equation*}
and with $g(x) := 0$ and $f$ chosen so that the solution $u$ is
\begin{equation*}
    u(x) = \prod_{i=1}^d \left(x_i e^{1 - |x_i|} - x_i\right).
\end{equation*}

The matrix field $A$ is piecewise constant and satisfies the $\mu$-Cordes condition with $\mu = \sqrt{2/5} \approx 0.63$ in dimension $d = 2$ and $\mu = \sqrt{6/11} \approx 0.74$ in dimension $d = 3$. According to \cref{prop:cordes_charac}, we let $\gamma(x) := \tr A(x) / |A(x)|^2 = d / (d^2 + d - 1)$. It is easily verified that $\gamma A \in [L^\infty(\Omega)]^{d \times d}$, $\gamma f \in L^\infty(\Omega)$, and $u \in W^{2, \infty}(\Omega)$.

We solve the scheme \cref{eq:quadr_scheme} on uniform square or cubic meshes obtained by dividing each edge of $\Omega$ into $N = 4, 8, 16, \ldots$ or $N = 5, 9, 17, \ldots$ intervals, and on randomly generated polytopal meshes, see \cref{fig:poly_meshes}. For even square and cubic meshes, one has $u \in C^\infty(\cT_h)$, thus \cref{thm:quadr_convergence_rate} applies and predicts convergence of order $1$ with respect to $h$ in the $H^2$ norm. For the other meshes, \cref{thm:quadr_convergence_rate_alt} applies with $\cT_h^* = \{K \in \cT_h \mid \forall x \in K,\, \forall 1 \leq i \leq d,\, x_i \neq 0\}$ and predicts convergence of order $1/2$ with respect to $h$ in the $H^2$ norm, since $\sum_{K \in \cT_h \setminus \cT_h^*} |K| \lesssim h$. The numerical results are consistent with those predictions, as illustrated in \cref{fig:errors}.

\subsection{Example 2: rough coefficients and smooth solution}
\label{subsec:numerics_roughdata_smoothsol}

As in the first problem, we consider equation \cref{eq:main} on the domain $\Omega = (-1, 1)^d$ in dimensions $d \in \{2, 3\}$, with coefficients given by \cref{eq:pw_cst_coeffs} and $g(x) := 0$, but now we choose $f$ such that the exact solution $u$ is
\begin{equation*}
    u(x) = \prod_{i=1}^d \sin(\pi x_i).
\end{equation*}

We solve the scheme \cref{eq:quadr_scheme} on the same sequences of meshes used for the first example. Since $u \in C^\infty(\Omega)$, \cref{thm:quadr_convergence_rate} now applies for all the meshes. Accordingly, we observe convergence of order $1$ with respect to $h$ in the $H^2$ norm for all the meshes, see \cref{fig:errorssmooth}.

\begin{figure}[t!]
    \centering
    \includegraphics[width=\textwidth]{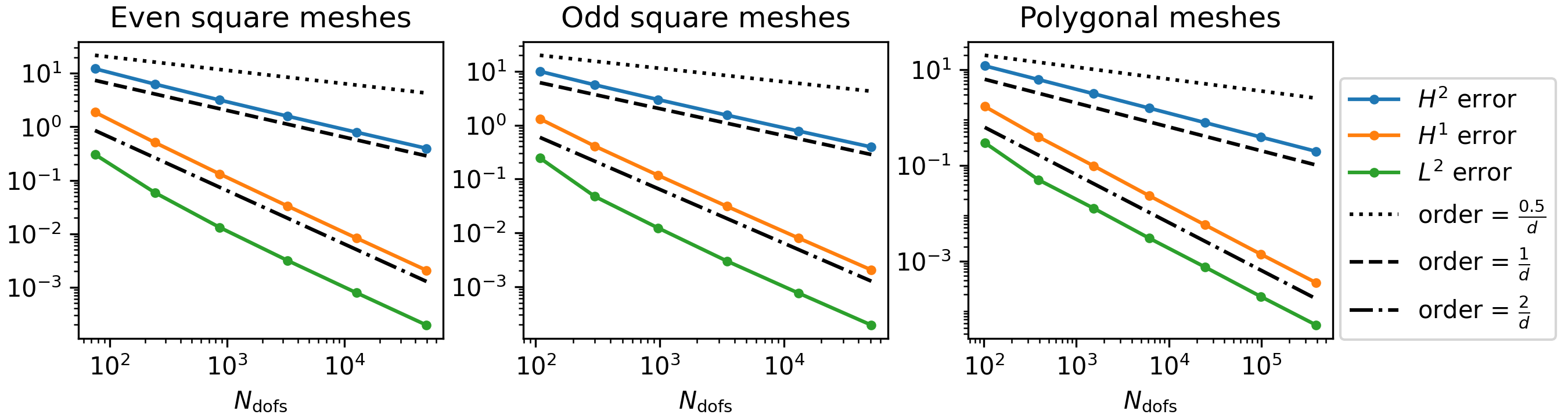}
    \includegraphics[width=\textwidth]{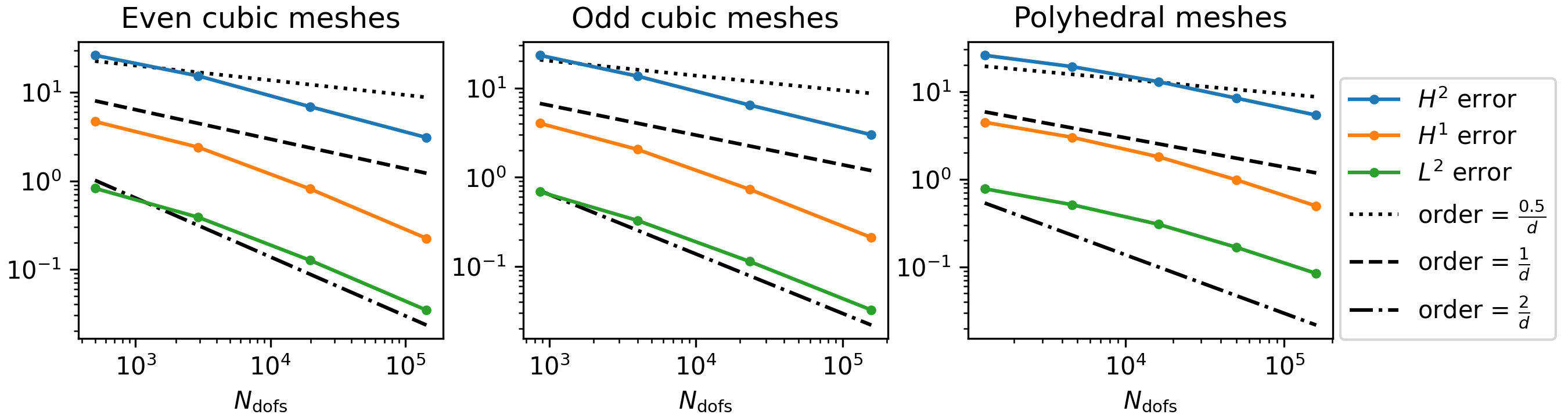}
    \caption{Example 2. Top: dimension two, bottom: dimension three. As expected, the optimal rate of convergence is observed in the $H^2$ norm regardless of the mesh. This highlights the fact that the discontinuity of $A$ and $f$ does not have any negative effect on the rate of convergence.}
    \label{fig:errorssmooth}
\end{figure}

\subsection{Example 3: point singularity and graded meshes}
\label{subsec:numerics_singular}

On the two-dimensional domain $\Omega := (0, 1)^2$, we consider the problem \cref{eq:main} with
\begin{align*}
    A(x) &:= \left(\delta_{ij} + \frac{x_i x_j}{|x|^2}\right)_{1 \leq i, j \leq 2}, &
    f(x) &:= 3.52 |x|^{-0.4},
\end{align*}
so that the solution is $u(x) = |x|^{1.6}$.

The matrix field $A$ satisfies the $\mu$-Cordes condition with $\mu = 1 / \sqrt{5}$. According to \cref{prop:cordes_charac}, we let $\gamma(x) := \tr A(x) / |A(x)|^2 = 3/5$. Observe that $u\in H^{2.6-\delta}(\Omega)$, for any $\delta > 0$.

We deduce from using \cref{prop:err_to_proj} and then applying \cref{eq:quadr_error_estimate_continuous} with $u_I = I_h^{\Lagrange} u$ that
\begin{align*}
    \|u - \Pi_2^* u_h\|_{2, h}
    &\lesssim \frac{c_\pi c^*}{c_* - \mu} \Bigg(\sum_{K \in \cT_h} (\|u - I_K^{\Lagrange} u\|_{2, K}^2 + |u - \Pi_2^0 u|_{2, h}^2
    \iftoggle{author}{}{\\ &\qquad \qquad \qquad}
    + Q_K[|\nabla^2 (u - \Pi_2^0 u)|^2])\Bigg)^{1/2}.
\end{align*}
In our experiments, we use meshes that have axis-aligned square cells, \emph{i.e.}~each cell $K \in \cT_h$ is of the form $K = (a_K, a_K + h_K / \sqrt{2}) \cap (b_K, b_K + h_K / \sqrt{2})$. We denote $x_K^0 := (a_K, b_K)$; remember also that $\overline x_K = (a_K + h_K / (2 \sqrt{2}), b_K + h_K / (2 \sqrt{2}))$ denotes the barycenter of $K$. Then,  we can prove the following.

\begin{figure}[t!]
    \centering
    \includegraphics[width=\textwidth]{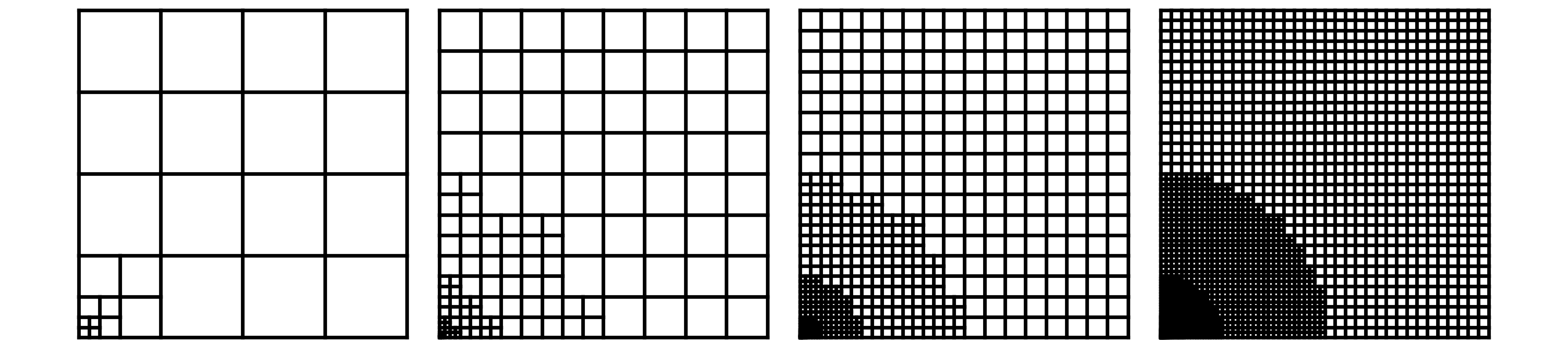}
    \caption{First four graded meshes.}
    \label{fig:graded_meshes}
    \vspace{10pt}
    \includegraphics[width=\textwidth]{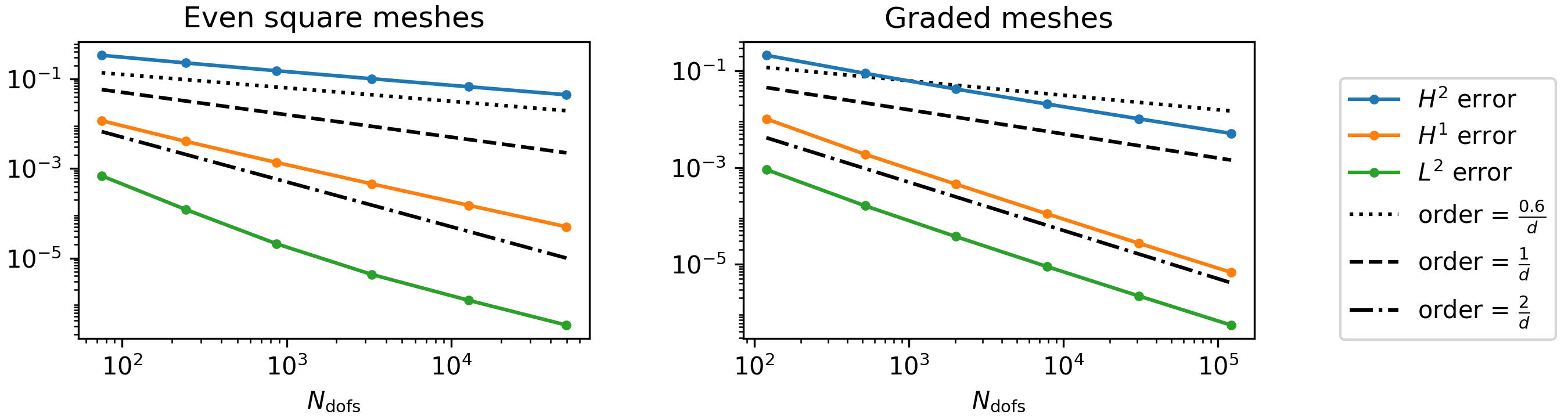}
    \caption{Example 3. In the $H^2$ norm and on uniform meshes, we observe convergence at the reduced rate $0.6 / d$ with respect to the number of degrees of freedom, due to the singularity of the problem. The optimal rate of convergence in the $H^2$ norm is recovered with meshes suitably graded towards the origin.}
    \label{fig:errorssingular}
\end{figure}

\begin{lemma}
    \label{lemma:numerics_singular}
    For  $u(x) = |x|^{1.6}$ and any $K \in \cT_h$, one has
    \begin{equation*}
        \|u - I_K^{\Lagrange} u\|_{2, K}^2 + |u - \Pi_2^0 u|_{2, K}^2 + Q_K[|\nabla^2 (u - \Pi_2^0 u)|^2] \lesssim h_K^4 |\overline x_K|^{-2.8}.
    \end{equation*}
\end{lemma}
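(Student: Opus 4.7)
The plan is to split the cells of $\cT_h$ into two regimes according to whether $|\overline x_K|$ is large or small compared with $h_K$. Since $(0,0)$ is a corner of $\Omega = (0,1)^2$ and the cells are axis-aligned squares of diameter $h_K$, one always has $|\overline x_K| \geq h_K/2$. I will use throughout the elementary bound $|\nabla^k u(x)| \lesssim |x|^{1.6 - k}$ (valid for $x \neq 0$ and every $k \geq 0$) together with the fact that $|x| - |\overline x_K| \in [-h_K/2, h_K/2]$ for $x \in K$.

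In Case 1, $|\overline x_K| \geq h_K$, so every $x \in K$ satisfies $|x| \sim |\overline x_K|$ and $|u|_{3, \infty, K} \lesssim |\overline x_K|^{-1.4}$. Then \cref{coro:interp_lagrange_simple} with $s=2$ gives $\|u - I_K^{\Lagrange} u\|_{2, K}^2 \lesssim h_K^4 |u|_{3, \infty, K}^2 \lesssim h_K^4 |\overline x_K|^{-2.8}$, and standard Scott-Dupont theory (applied with $s=2$) together with $|u|_{3, K}^2 \leq |K| \, |u|_{3, \infty, K}^2$ handles the projection term with the same bound. For the quadrature term, I would interpose the degree-two Taylor polynomial $q$ of $u$ at $\overline x_K$: a Taylor estimate yields $\|\nabla^2(u - q)\|_{0, \infty, K} \lesssim h_K |u|_{3, \infty, K}$, while the $L^2$-best-approximation property of $\Pi_2^0 u$, combined with a polynomial inverse inequality applied to $\Pi_2^0 u - q \in \PP_2(K)$, gives $\|\nabla^2(\Pi_2^0 u - q)\|_{0, \infty, K} \lesssim h_K |u|_{3, \infty, K}$. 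Hence $Q_K[|\nabla^2(u - \Pi_2^0 u)|^2] \leq |K| \, \|\nabla^2(u - \Pi_2^0 u)\|_{0, \infty, K}^2 \lesssim h_K^4 |\overline x_K|^{-2.8}$.

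In Case 2, $|\overline x_K| < h_K$, so $|\overline x_K| \sim h_K$ and the target bound reads $\lesssim h_K^{1.2}$. A direct integration in polar coordinates over the disk of radius $h_K$ gives $\|u\|_{2, K}^2 \lesssim h_K^{1.2}$ and $\|u\|_{0, K}^2 \lesssim h_K^{3.2+d}$. Then $|u - \Pi_2^0 u|_{2, K}^2 \leq \|u\|_{2, K}^2 \lesssim h_K^{1.2}$. For the Lagrange interpolation error, I would use \cref{coro:interp_stability_full_lagrange} together with $|u|_{1, \infty, K} \lesssim h_K^{0.6}$ to bound $\|I_K^{\Lagrange} u\|_{2, K} \lesssim h_K^{0.6}$, hence $\|u - I_K^{\Lagrange} u\|_{2, K}^2 \lesssim h_K^{1.2}$. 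For the quadrature term, $|\nabla^2 u(\overline x_K)| \lesssim |\overline x_K|^{-0.4} \sim h_K^{-0.4}$, while a polynomial inverse inequality combined with the $L^2$-stability of $\Pi_2^0$ yields $\|\nabla^2 \Pi_2^0 u\|_{0, \infty, K} \lesssim h_K^{-2-d/2} \|u\|_{0, K} \lesssim h_K^{-0.4}$, so that $|K| \, |\nabla^2(u - \Pi_2^0 u)(\overline x_K)|^2 \lesssim h_K^d \cdot h_K^{-0.8} = h_K^{1.2}$.

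The principal technical subtlety is the quadrature term $Q_K[|\nabla^2(u - \Pi_2^0 u)|^2]$: it requires pointwise (hence $L^\infty$) control of $\nabla^2(u - \Pi_2^0 u)$, not merely the $L^2$ control one would obtain directly from Scott-Dupont theory. In Case 1 this is achieved by interposing the Taylor polynomial and invoking a polynomial inverse inequality; in Case 2 it is achieved by estimating $\nabla^2 u(\overline x_K)$ directly from the explicit form of $u$ and bounding $\nabla^2 \Pi_2^0 u$ via $L^2$-stability of the projection plus a polynomial inverse inequality.
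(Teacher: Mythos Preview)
Your proposal is correct and follows essentially the same two-case strategy as the paper, splitting on whether $|\overline x_K| \geq h_K$ or not and using \cref{coro:interp_lagrange_simple} plus Scott--Dupont theory in the far case and direct computation plus \cref{coro:interp_stability_full_lagrange} in the near case. The only tactical differences are cosmetic: for the quadrature term in Case~1 you interpose the Taylor polynomial and invoke an inverse inequality, whereas the paper simply bounds $Q_K[\cdot]$ and $|u-\Pi_2^0 u|_{2,K}^2$ together by $h_K^2 |u-\Pi_2^0 u|_{2,\infty,K}^2$ and applies the $L^\infty$ Scott--Dupont estimate directly; and in Case~2 the paper exploits that $\nabla^2 \Pi_2^0 u$ is constant to write $Q_K[|\nabla^2 \Pi_2^0 u|^2] = |\Pi_2^0 u|_{2,K}^2 \lesssim |u|_{2,K}^2$, which is slightly cleaner than your route via $\|u\|_{0,K}$.
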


\begin{proof}
    Let $E_K := \|u - I_K^{\Lagrange} u\|_{2, K}^2 + |u - \Pi_2^0 u|_{2, K}^2 + Q_K[|\nabla^2 (u - \Pi_2^0 u)|^2]$.

    If $|\overline x_K| \geq h_K$, then $|\overline x_K| \lesssim |x_K^0|$, and
    \begin{align*}
        E_K
        &\lesssim \|u - I_K^{\Lagrange} u\|_{2, K}^2 + h_K^2 |u - \Pi_2^0 u|_{2, \infty, K}^2
        \lesssim h_K^4 |u|_{3, \infty, K}^2
        = h_K^4 |\nabla^3 u(x_K^0)|^2 \\
        &\lesssim h_K^4 |x_K^0|^{-2.8}
        \lesssim h_K^4 |\overline x_K|^{-2.8},
    \end{align*}
    where we used \cref{coro:interp_lagrange_simple} and classical Scott-Dupont theory for the second inequality.

    Now assume that $|\overline x_K| \leq h_K$. Using that
    \begin{align*}
        Q_K[|\nabla^2 (u - \Pi_2^0 u)|^2]
        &\lesssim Q_K[|\nabla^2 u|^2] + Q_K[|\nabla^2 \Pi_2^0 u|^2]
        = Q_K[|\nabla^2 u|^2] + |\Pi_2^0 u|_{2, K}^2 \\
        &\lesssim Q_K[|\nabla^2 u|^2] + |u|_{2, K}^2
    \end{align*}
    together with \cref{thm:interp_lagrange} and classical Scott-Dupont theory, one has
    \begin{equation*}
        E_K \lesssim |u|_{1, \infty, K}^2 + |u|_{2, K}^2 + Q_K[|\nabla^2 u|^2].
    \end{equation*}
    We note that $Q_K[|\nabla^2 u|^2] \lesssim h_K^2 |\nabla^2 u(\overline x_K)|^2 \lesssim h_K^2 |\overline x_K|^{-0.8} \leq h_K^4 |\overline x_K|^{-2.8}$, and, by a direct computation, that $|u|_{1, \infty, K}^2 + |u|_{2, K}^2 \lesssim h_K^{1.2} \leq h_K^4 |\overline x_K|^{-2.8}$, which concludes the proof.
\end{proof}

The above result yields the error estimate
\begin{equation*}
    \|u - \Pi_2^* u_h\|_{2, h} \lesssim \frac{c_\pi c^*}{c_* - \mu} \left(\sum_{K \in \cT_h} h_K^4 |\overline x_K|^{-2.8}\right)^{1/2},
\end{equation*}
which we may further estimate depending on the mesh at hand.

We consider either meshes which are uniform or graded towards the origin.

On uniform meshes, using that $h \lesssim |\overline x_K|$ for all $K \in \cT_h$, one has
\begin{equation*}
    \|u - \Pi_2^* u_h\|_{2, h} \lesssim \frac{c_\pi c^*}{c_* - \mu} h^{0.6} \left(\sum_{K \in \cT_h} h^2 |\overline x_K|^{-2}\right)^{1/2},
\end{equation*}
where the sum inside the parentheses is similar to integrating $|x|^{-2}$ over $\Omega = (0, 1)^2$;
accordingly, we expect convergence of order $0.6$ with respect to $h$, up to a logarithm. The numerical results we obtain are consistent with this expectation, see \cref{fig:errorssingular}.

Inspired by \cref{lemma:numerics_singular} and following a principle of error equidistribution, we design graded meshes by recursive quadrisection towards the origin so that $h_K^4 |\overline x_K|^{-2.8}$ lies below some given threshold for all $K \in \cT_h$, see \cref{fig:graded_meshes}. According to the virtual element philosophy, we interpret hanging nodes as additional vertices of the polygonal cells. We observe from \cref{fig:errorssingular} that graded meshes allow recovering optimal convergence in the $H^2$ norm.

\section*{Acknowledgments}

The first author was partially supported by European Social Fund FSE SISSA 2019 Grant FP195673001 and NSF Grant DMS-1908267. The second author was partially supported by INdAM Research group GNCS. The third author was partially supported by NSF Grant DMS-1908267.

\iftoggle{author}{\bibliographystyle{plain}}{\bibliographystyle{ws-m3as}}
\bibliography{references}
\end{document}